\theoremstyle{plain}
\newtheorem{thm}{Theorem}[section]
\newtheorem{theorem}[thm]{Theorem}
\newtheorem{corollary}[thm]{Corollary}
\newtheorem{lemma}[thm]{Lemma}
\newtheorem{proposition}[thm]{Proposition}
\newtheorem{ques}[thm]{Question}
\newtheorem{conjecture}[thm]{Conjecture}
\theoremstyle{definition}
\newtheorem{de}[thm]{Definition}
\newtheorem{remark}[thm]{Remark}
\newtheorem{convention}[thm]{Convention}
\newtheorem{example}[thm]{Example}
\newcommand{\Z}{\mathbb{Z}}
\newcommand{\N}{\mathbb{N}}
\newcommand{\cz}[1]{{\color{red}{#1}\color{black}{}}}
\newcommand{\id}{\mathrm{id}}
\newcommand{\aut}[1]{\mathrm{Aut}(#1)}
\newcommand{\dis}[1]{\mathrm{Dis}(#1)}
\newcommand{\Ker}{\mathop{\mathrm{Ker}}}
\newcommand{\kker}{\mathop{\mathrm{ker}}}
\newcommand{\Aut}{\mathop{\mathrm{Aut}}}
\newcommand{\Soc}{\mathop{\mathrm{Soc}}}
\newcommand{\obraz}[2]{\;\vphantom{x}^{#2}\!{#1}}
\newcommand{\obrazS}[2]{\vphantom{x}^{#2}\!{#1}}
\numberwithin{equation}{section}
\begin{document}
\title{Indecomposable involutive solutions of \\the Yang-Baxter equation of multipermutation level 2 \\ with non-abelian permutation group}

\author{P\v remysl Jedli\v cka}
\author{Agata Pilitowska}

\address{(P.J.) Department of Mathematics, Faculty of Engineering, Czech University of Life Sciences, Kam\'yck\'a 129, 16521 Praha 6, Czech Republic}
\address{(A.P.) Faculty of Mathematics and Information Science, Warsaw University of Technology, Koszykowa 75, 00-662 Warsaw, Poland}

\email{(P.J.) jedlickap@tf.czu.cz}
\email{(A.P.) agata.pilitowska@pw.edu.pl}

\keywords{Yang-Baxter equation, set-theoretic solution, multipermutation solution, indecomposable solution, permutation group}
\subjclass[2010]{Primary: 16T25. Secondary: 20B35, 16T30.}

\date{\today}

\begin{abstract}
We give a complete characterization of all indecomposable involutive solutions of the Yang-Baxter equation of multipermutation level~2.
In the first step we present a construction of some family of such solutions and in the second step we prove that every indecomposable involutive solution of the Yang-Baxter equation with multipermutation level~2 is a homomorphic image of a solution previously constructed. Analyzing this epimorphism, we are able to obtain  all such solutions up to isomorphism and enumerate these of small sizes.
\end{abstract}

\maketitle


\section{Introduction}\label{sec:intr}

The Yang-Baxter equation is a fundamental equation occurring in mathematical physics. It appears, for example, in integrable models in statistical mechanics, quantum field theory or Hopf algebras~(see e.g. \cite{Jimbo, Kassel}). Searching for its solutions has been absorbing researchers for many years.

Let us recall that, for a vector space $V$, a {\em solution of the Yang--Baxter equation} is a linear mapping $r:V\otimes V\to V\otimes V$ 
 such that
\begin{align*}
(id\otimes r) (r\otimes id) (id\otimes r)=(r\otimes id) (id\otimes r) (r\otimes id).
\end{align*}

Description of all possible solutions seems to be extremely difficult and therefore
there were some simplifications introduced by Drinfeld in \cite{Dr90}.
Let  $X$ be a basis of the space $V$ and let $\sigma:X^2\to X$ and $\tau: X^2\to X$ be two mappings. We say that $(X,\sigma,\tau)$ is a {\em set-theoretic solution of the Yang--Baxter equation} if
the mapping 
$$x\otimes y \mapsto \sigma(x,y)\otimes \tau(x,y)$$ extends to a solution of the Yang--Baxter
equation. It means that $r\colon X^2\to X^2$, where $r=(\sigma,\tau)$,  is a bijection and
satisfies the \emph{braid relation}:
\begin{equation}\label{eq:braid}
(id\times r)(r\times id)(id\times r)=(r\times id)(id\times r)(r\times id).
\end{equation}

A solution is called {\em non-degenerate} if the mappings $\sigma_x=\sigma(x,\_)$ and $\tau_y=\tau(\_\,,y)$ are bijections,
for all $x,y\in X$.
A solution $(X,\sigma,\tau)$ is {\em involutive} if $r^2=\mathrm{id}_{X^2}$, and it is
\emph{square free} if $r(x,x)=(x,x)$, for every $x\in X$.

\begin{convention}

All solutions, we study in this paper, are set-theoretic, non-degenerate and involutive, so we will call them simply \emph{solutions}. 

\end{convention}

Various algebraic structures are naturally associated with solutions $(X,\sigma,\tau)$.  Among others, there is a group of bijections defined on the set $X$. \emph{The permutation group} $\mathcal{G}(X)=\left\langle \sigma_x\colon x\in X\right\rangle$ of a solution  is the subgroup of the symmetric group $S(X)$ generated by mappings $\sigma_x$, with $x\in X$. The group $\mathcal{G}(X)$ is also called \emph{the Yang-Baxter group} (IYB group) associated to the solution $(X,\sigma,\tau)$. A solution $(X,\sigma,\tau)$ is \emph{indecomposable} if the permutation group  $\mathcal{G}(X)$ acts transitively on $X$.

A solution with regular permutation group is called \emph{uniconnected} (see \cite{Rump20}). By definition, uniconnected solutions are indecomposable.

In \cite[Section 3.2]{ESS} Etingof, Schedler and Soloviev introduced, for each solution $(X,\sigma,\tau)$, the equivalence relation $\sim$ on the set $X$: for each $x,y\in X$
\[
x\sim y\quad \Leftrightarrow\quad \tau(\_\,,x)=\tau(\_\,,y).
\]
They showed that the quotient set $X/\mathord{\sim}$ can be again endowed
with a structure of a solution and they call such a solution the {\em retraction} of the solution~$(X,\sigma,\tau)$ and denote it by
$\mathrm{Ret}(X)$. A solution~$(X,\sigma,\tau)$ is said to be a {\em permutation} solution (or {\em multipermutation solution of level}~$1$) if $|\mathrm{Ret}(X)|=1$. 
A non-permutation solution~$(X,\sigma,\tau)$ is said to be a
{\em multipermutation solution of level}~$2$ (or briefly, $2$-\emph{permutational} solution), if $|\mathrm{Ret}(X)/\mathord{\sim}|=1$.

Even
describing all set-theoretic solutions still remains a very difficult task. For this reason it looks fruitful to prospect less complex solutions and, in some sense, treat them as ``bricks'' to build new ones. An approach to study solutions is to classify them according to their permutation groups. Special attention is now paid when such groups are transitive. Clearly, knowledge of indecomposable solutions could allow one to build all the decomposable ones. For example, Rump showed in \cite{Rump05} that every finite square free solution admits a nontrivial decomposition.

Cedó and Okniński characterized in \cite{CO21} so called {\em simple} solutions (solutions with only trivial congruences). In particular, they proved that each finite simple solution (not of a prime order) is indecomposable. Moreover, by the results from \cite{CCP}, every finite indecomposable solution is a dynamical extension of a simple solution.  

Unfortunately, it is not easy to find, in general, all the indecomposable solutions. The investigation of indecomposable solutions was initiated  by Etingof et. al. in \cite{ESS}. They showed there that such solutions of prime cardinality are permutation solutions with cyclic permutation group. In \cite{JPZ22} the authors of this paper together with Zamojska-Dzienio gave a complete system of three invariants for finite non-isomorphic  indecomposable solutions with cyclic permutation groups (cocyclic solutions). Smoktunowicz and Smoktunowicz presented in \cite{SS18} a construction of all finite indecomposable solutions based on one-generated left braces. Also Rump in \cite{Rump20} gave another method of constructing indecomposable solutions from left braces. Additionally, the results of Bachiller, Cedó and Jespers~\cite{BCJ16} allow one to construct all solutions with a given permutation group. 
But all the above constructions require being able to construct all (finite) left braces. 
Moreover, by results of Smoktunowicz and Smoktunowicz, we know that one-generated  left braces, and in consequence, indecomposable solutions of non-prime cardinality are quite frequent. 

In the light of what we know it is reasonable
to focus on a smaller class of indecomposable solutions, for instance multipermutation solutions of level $2$ with non-abelian permutation group. Note that study multipermutation solutions of low levels attracts, among others, researchers interested in the quantum spaces (see e.g. \cite{GIM11}).

By results of Gateva-Ivanova, multipermutation solutions of level $2$ can be characterized in an easy way.
\begin{theorem}\cite[Proposition 4.7]{GI18}\label{GI:mper}
Let $(X,\sigma,\tau)$ be a solution and $|X|\geq 2$. Then
$(X,\sigma,\tau)$ is a multipermutation solution of level $2$ if and only if the following condition holds for every $x,y,z\in X$:
\begin{align}\label{eq:2per}
\sigma_{\sigma_y(x)}=\sigma_{\sigma_z(x)}.
\end{align}

\end{theorem}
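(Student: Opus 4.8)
The plan is to reduce everything to a single observation — for a solution, being a permutation solution is the same as the left action $x\mapsto\sigma_x$ being constant — and then to apply that observation to the retraction $\mathrm{Ret}(X)$. I will use the identity coming from involutivity: writing $r(x,y)=(\sigma_x(y),\tau_y(x))$, the equality $r^2=\mathrm{id}$ forces $\sigma_{\sigma_x(y)}(\tau_y(x))=x$, that is,
\[
\tau_y(x)=\sigma_{\sigma_x(y)}^{-1}(x)\qquad\text{for all }x,y\in X.
\]
I will also use the standard fact (see e.g.\ \cite{ESS}) that for our solutions the relation $\sim$ is detected by the left action, $x\sim y\iff\sigma_x=\sigma_y$, and hence that $\sigma$ and $\tau$ descend coordinatewise to $X/\mathord{\sim}$, so that $\mathrm{Ret}(X)$ is again a solution in our sense.

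\emph{Step 1: a solution is a permutation solution if and only if $\sigma$ is constant.} If $\sigma_x=f$ for every $x$, the displayed identity gives $\tau_y(x)=f^{-1}(x)$ independently of $y$, so all the maps $\tau_y$ coincide and $|\mathrm{Ret}(X)|=1$. Conversely, if $|\mathrm{Ret}(X)|=1$ then all the $\tau_y$ equal one bijection $g$, and the displayed identity rewrites as $\sigma_{\sigma_x(y)}(g(x))=x$ for all $x,y$. Fixing $x$ and letting $y$ run over $X$, the element $\sigma_x(y)$ runs over all of $X$ (as $\sigma_x$ is a bijection), so $\sigma_v(g(x))=x$ for every $v\in X$; letting $x$ vary and using that $g$ is surjective, we get $\sigma_v=\sigma_{v'}$ for all $v,v'$.

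\emph{Step 2: apply Step 1 to $\mathrm{Ret}(X)$.} By Step 1 applied to the solution $\mathrm{Ret}(X)$, it is a permutation solution exactly when its left actions $\bar\sigma_{[x]}\colon[y]\mapsto[\sigma_x(y)]$ are independent of $[x]$, i.e.\ when $\sigma_x(y)\sim\sigma_{x'}(y)$ for all $x,x',y\in X$; by the $\sigma$-description of $\sim$ this says precisely that $\sigma_{\sigma_x(y)}=\sigma_{\sigma_{x'}(y)}$ for all $x,x',y$, which is condition~\eqref{eq:2per} up to renaming of variables. Hence~\eqref{eq:2per} holds if and only if $\mathrm{Ret}(X)$ is a permutation solution, i.e.\ if and only if $\mathrm{Ret}(\mathrm{Ret}(X))$ is trivial; equivalently, \eqref{eq:2per} characterises multipermutation level at most $2$ (note it holds vacuously for permutation solutions themselves), and together with the hypothesis that $(X,\sigma,\tau)$ is not a permutation solution — by Step 1, that $\sigma$ is non-constant — it characterises level exactly $2$.

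The only non-routine ingredient is the equivalence $x\sim y\iff\sigma_x=\sigma_y$ for our solutions, together with the well-definedness of the induced operations on $X/\mathord{\sim}$; granting these, Steps 1 and 2 are just bookkeeping. One could instead try to argue entirely in the $\tau$-picture using the displayed identity, but then translating ``$\mathrm{Ret}(\mathrm{Ret}(X))$ is trivial'' directly into~\eqref{eq:2per} turns into a long computation, so factoring the argument through Step 1 is the efficient route.
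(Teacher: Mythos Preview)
The paper does not supply its own proof of this statement; it quotes it verbatim from \cite[Proposition 4.7]{GI18} and uses it as a black box, so there is no in-paper argument to compare against. Your proof is correct and self-contained modulo the cited standard facts: the involutivity identity $\tau_y(x)=\sigma_{\sigma_x(y)}^{-1}(x)$, the equivalence $x\sim y\iff\sigma_x=\sigma_y$ for involutive non-degenerate solutions, and the well-definedness of the retraction. Your Step~1 is a clean characterisation of permutation solutions via constancy of $\sigma$, and Step~2 is the right reduction. One minor point worth flagging explicitly (you do allude to it at the end): condition~\eqref{eq:2per} as stated literally characterises multipermutation level \emph{at most}~$2$, since it also holds for permutation solutions; the ``exactly~$2$'' reading relies on the paper's convention that ``multipermutation solution of level~$2$'' is defined only for non-permutation solutions, and the hypothesis $|X|\geq 2$ alone does not exclude the permutation case. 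This is a quirk of the statement rather than of your argument.
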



All finite indecomposable solutions of the Yang-Baxter equation of multipermutation level at most 2 with abelian permutation groups were constructed in~\cite{JPZ21}.
Probably the first example of an indecomposable solution of multipermutation level $2$  with non-abelian permutation group was presented by 
Etingof et. al. in \cite{ESS}. In \cite[Subsection 3.3.]{ESS} they constructed all indecomposable solutions of multipermutation level $2$ of order $6$. Later on a new family of such solutions of order $2n$, for arbitrary $n\in \mathbb{N}$ and with  the permutation group isomorphic to $\Z_n\rtimes \Z_2$, was presented in \cite[Example 2.6]{JPZ21}. Just recently, Castelli characterized in \cite{C21} all the uniconnected 
solutions having odd size, a $Z$-group permutation group and having multipermutation level at most 2. He classified all such solutions with odd square-free order, up to isomorphism \cite[Theorem 5.2]{C21}.

In our paper, we study general indecomposable solutions
of multipermutation level~2. In \cite{JPZ20}
the authors, together with Zamojska-Dzienio, presented a way how to obtain arbitrary solutions of multipermutation level~2. Now we apply this method to describe indecomposable ones. We show that an important r\^ole is played by one normal abelian subgroup of the permutation group, called the {\em displacement group}.
We also find a way how to obtain (in theory) all indecomposable solutions  
of multipermutation level~2 of arbitrary size $n$ and we show that there are at least $2^{n/2}-1$ such solutions when~$n$ is a power of~$2$.
This result has not been expected since the numbers
of indecomposable solutions is known by~\cite{AMV} only up to the size of~$11$ and we have not have enough evidence
about the growth of the numbers for powers of primes.

The paper is organized as follows. 
Since the most standard way how to construct solutions is using braces, in Section \ref{sec:unicon} we recall that every uniconnected
solution can be constructed from a left brace on the basis of results of Rump and we present some examples of this construction. In Section~\ref{sec:mul} we recall
a connection between solutions of multipermutation level~$2$
and so-called {\em $2$-reductive} solutions
and, based on this relationship, we describe the structure
of the displacement group.
Section \ref{sec:constr} contains a construction of a special class of indecomposable  solutions of multipermutation level 2 (Theorem \ref{th:main}).
We study properties of the construction. In particular we show that the permutation group of each such solution is a semidirect product of its displacement group and a cyclic group and the automorphism group of the solution is transitive.
Finally, in Section~\ref{sec:homo} we characterize all congruence relations of solutions constructed in Section \ref{sec:constr}, we show that
every indecomposable solution of multipermutation
level~$2$ is an image of one particular solution
and also give a characterization of all the solutions that can be obtained by the construction described in Theorem \ref{th:main}.
This allows us to calculate all such solutions up to the size of~$17$, which is done in Section~6. Additionally, we prove that the automorphism group of any indecomposable solution of multipermutation
level~$2$ is regular.

\section{Uniconnected $2$-permutational solutions}\label{sec:unicon}

In this section we recall some facts about braces and uniconnected solutions.
 Rump in \cite[Definition 2]{Rump07} introduced the notion of a brace and showed the correspondence between such structures and solutions. Here we use an equivalent definition formulated by Ced\'{o}, Jespers and Okni\'{n}ski.

\begin{de}\cite[Definition 2.2]{CJO}
 An algebra $(B,+,\circ)$ is called a {\em left brace} if $(B,+)$ is
 an abelian group, $(B,\circ)$ is a group and the operations satisfy, for all $a,b,c\in B$,
 \begin{equation}\label{lb}
  a\circ b+a\circ c = a\circ(b+c)+a.
 \end{equation}
 \end{de}
 \noindent
The inverse element to~$a$ in the group $(B,\circ)$ shall be denoted by $a^{-}$.

For a left brace $(B,+,\circ)$ there is an action $\lambda\colon(B,\circ)\rightarrow\Aut{(B,+)}$, where $\lambda(a)=\lambda_a$ and for all $b\in B$, $\lambda_a(b)=a\circ b-a$. The kernel of the group homomorphism $\lambda$ is called the \emph{socle} of the left brace $(B,+,\circ)$ and denoted by $\Soc(B)$, i.e.
\begin{equation*}
\Soc(B)=\Ker(\lambda)=\{a\in B\colon \lambda_a=\id\}=\{a\in B\colon a\circ b=a+b \text{ for all } b\in B\}.
\end{equation*}
$\Soc(B)$ is a normal subgroup of both: the group $(B,+)$ and $(B,\circ)$. Hence the quotient $(B,\circ)/\Soc(B)$ of the multiplicative group is also the quotient of the additive group $(B,+)$ and the factor left brace $B/\Soc(B):=(B,+,\circ)/\Soc(B)$ by $\Soc(B)$ is well defined.

A brace is called {\em cyclic} if the additive group $(A,+)$ is cyclic, {\em bicyclic} if both groups are cyclic and it is {\em trivial} if $\Soc(B)=B$.

The \emph{associated solution} $(B,\sigma,\tau)$ to a left brace $(B,+,\circ)$ is defined on a set $B$ as follows: $\sigma_x(y):=\lambda_x(y)$ and $\tau_y(x):=\lambda^{-1}_{\lambda_x(y)}(x)$ for $x,y\in B$.

\vskip 2mm
One of methods to construct braces from given ones is by their semidirect product. Let $(B_1,+_1,\circ_1)$ and 
$(B_2,+_2,\circ_2)$ be two left braces and let $\alpha\colon (B_2,\circ_2)\to \Aut{(B_1,+_1,\circ_1)}$ be a homomorphism of groups. The brace $B_1\rtimes_{\alpha} B_2=(B_1\times B_2,+,\circ)$ with  $(B_1\times B_2,+)=(B_1,+_1)\times (B_2,+_2)$ and for $(x_1,x_2),(y_1,y_2)\in B_1\times B_2$ 
\[
(x_1,x_2)\circ(y_1,y_2)=(x_1\circ_1 \alpha(x_2)(y_1),x_2\circ_2 y_2)
\]
is called the {\em semidirect product of the left braces} with respect to $\alpha$. In particular,
\[
\lambda_{(x_1,x_2)}((y_1,y_2))=
(x_1\circ_1 \alpha(x_2)(y_1)-x_1,x_2\circ_2 y_2-x_2).
\]

A subset $X$ of a left brace $(B,+,\circ)$ is called a {\em cycle base} if $X$ is a union of orbits of the action $\lambda$ and generates the additive group $(B,+)$. A cycle base is {\em transitive} if it is a single orbit. 
Note that a transitive cycle base always exists in a cycle brace.

In \cite[Theorem 4.2]{Rump20} Rump showed that with a left brace $(B,+,\circ)$ with a transitive cycle base $X$ one can associate a uniconnected solution $(B,\sigma,\tau)$: 
\begin{align}\label{sol:un}
\sigma_x(y):=(\lambda_x(g))^{-}\circ y
\end{align}
for $x,y\in B$ and $g\in X$, such that the permutation group  $\mathcal{G}(B)$ is isomorphic to the group $(B,\circ)$.  
On the other hand, each uniconnected solution can be constructed in this way. 

We now present an example of braces which later on will be good illustration of construction described in Section \ref{sec:constr}. As usual, we will denote by $\Z_n$ the group $(\Z_n,+_n)=\mathbb{Z}/(n)$.
\begin{example}\label{exm:unisol}
Let $n\in \mathbb{N}$, $(G,+,0)$ be an abelian group and $\alpha\in\aut{G}$ be of an order dividing~$n$.
Further, let $(B,+,\circ)=(G\times \Z_n,+,\circ)$ be the $\alpha$-semidirect product of two trivial braces: $(G,+,+)$ and $(\Z_n,+_n,+_n)$. 
In this case, for $(a,i),(b,j)\in G\times \Z_n$ we have
\[
(a,i)\circ (b,j)=(a+\alpha^i(b),i+_n j),
\quad
(a,i)^{-}=(-\alpha^{-i}(a),-i),\quad{\rm and}\quad \lambda_{(a,i)}((b,j))=(\alpha^i(b),j).
\]
Assume now that an element $(g,h)\in G\times \Z_n$ lies in a transitive cycle base of $(B,+,\circ)$ (if such transitive cycle base exists). 
By Rump's result, $(G\times \Z_n,\sigma, \tau)$ with 
\[
\sigma_{(a,i)}((b,j))=(\lambda_{(a,i)}((g,h)))^{-}\circ (b,j)=
(-\alpha^{i-h}(g),-h)\circ (b,j)
=(\alpha^{-h}(b)-\alpha^{i-h}(g),j-h),
\]
for $(a,i),(b,j)\in G\times \Z_n$, is a uniconnected solution with the permutation group  $\mathcal{G}(G\times \Z_n)$ isomorphic to the semidirect product $G\rtimes \Z_n$ of groups $(G,+,0)$ and $\Z_n$. 

Moreover,
\[
 \sigma_{(a,i)}^{-1}((b,j))=(\alpha^h(b)+\alpha^i(g),j+h).
\]

It is worth noticing that the solution satisfies~\eqref{GI:mper};
for $(a,i),(a',i'), (b,j), (c,k)\in  G\times \Z_n$ we have:
\[
\sigma^{-1}_{\sigma^{-1}_{(a,i)}((b,j))}((c,k))=\sigma^{-1}_{(\alpha^h(b)+\alpha^i(g),j+h)}((c,k))=(\alpha^h(c)+\alpha^{j+h}(g),k+h)=\sigma^{-1}_{\sigma^{-1}_{(a',i')}((b,j))}((c,k)).
\]
Hence, the solution is ~$2$-permutational.


\end{example}

\begin{example}\label{exm:uni223}
Let $(G,+,0)=\Z_2\times \Z_2$ and 
let $\alpha=\left(\begin{smallmatrix}
0&1\\
1&1\end{smallmatrix}\right)\in\Aut{\Z_2\times \Z_2}$ be an automorphisms 
of order $3$. 
Let $n=3$ and $(g,h)=((1,0),1)$. Clearly,
the set $\{(\alpha^i((1,0)),1)\mid i\in\Z_3\}=\{((1,0),1), ((0,1),1),((1,1),1)$ generates the group $\Z_2\times \Z_2\times \Z_3$. Hence, by Example \ref{exm:unisol}, $(\Z_2\times \Z_2\times \Z_3,\sigma,\tau)$, with
 \[
\sigma_{((a_1,a_2),i)}(((b_1,b_2),j))=(\alpha^{-1}((b_1,b_2))-\alpha^{i-1}((1,0)),j-1)=((b_1+b_2,b_1)-\alpha^{i-1}((1,0)),j-1),
\]
is a uniconnected ~$2$-permutational solution with the permutation group  $\mathcal{G}(\Z_2\times \Z_2\times \Z_3)$ isomorphic to $(\Z_2\times \Z_2)\rtimes \Z_3$.

Straightforward calculations show that all permutations $\sigma_x$ are composed of cycles of length $3$. We have also the abelian subgroup $(\{\id, \sigma_{\mathbf{1}}\sigma^{-1}_{\mathbf{0}},\sigma_{\mathbf{2}}\sigma^{-1}_{\mathbf{0}},\sigma_{\mathbf{1}}\sigma^{-1}_{\mathbf{0}}\sigma_{\mathbf{2}}\sigma^{-1}_{\mathbf{0}}\},\circ)$ of $\mathcal{G}(\Z_2\times \Z_2\times \Z_3)$ with $\mathbf{0}=((0,0),0)$, $\mathbf{1}=((0,0),1)$ and  $\mathbf{2}=((0,0),2)$,  which is isomorphic to the group $\Z_2\times \Z_2$ and $\langle\sigma_{\mathbf{0}}\rangle\cong \Z_3$.

\end{example}

\begin{example}\label{exm:Rump}
According to Rump's classification of cyclic left braces \cite{Rump07B}, for a left cyclic brace $(B,+,\circ)$ with $|B|=p^m$, for some prime $p$ and $m\in \mathbb{N}$, the adjoint group $(B,\circ)$ admits a cyclic subgroup of index less or equal to $2$. Hence, if $p\neq 2$, the group $(B,\circ)$ is cyclic, and consequently $(B,+,\circ)$ is bicyclic.

In \cite[Corollary 4.8.]{JPZ21} we, together with Zamojska-Dzienio, have already described all finite indecomposable solutions of the multipermutation level at most $2$ with abelian (cyclic) permutation group. Since an abelian group acting transitively is regular, all of them are uniconnected.

For a non-abelian cyclic left brace $(B,+,\circ)$, with $|B|=2^m$, either $\Soc(B)$ is of index $2$ in $(B,+,\circ)$ or $B/\Soc(B)$ is not bicyclic. In the latter case, the uniconnected solution associated with $(B,+,\circ)$  is not a multipermutation solution of level at most $2$. So, by \cite[Proposition 12]{Rump07B}, to construct all uniconnected solutions of the multipermutation level at most $2$, of size $2^m$ with $m>2$, and originated from a cyclic left brace, we have to consider only two types of such left braces of size~$2^m$. 
\vskip 3mm
\noindent
{\bf Case 1.} Let $(\Z_{2^m},+,\circ)$ be a cyclic left brace with the adjoint group being a dihedral group. Then, for $a,b\in \Z_{2^m}$,
\[
a\circ b=a+(-1)^ab=\begin{cases}a+b, \quad{\rm if}\; a\; {\rm is\; even,}\\
a-b, \quad{\rm if}\; a\; {\rm is\; odd.}
\end{cases}
\]
Let $g=1$. Hence $\lambda_a(1 )=a\circ 1-a=(-1)^a=(\lambda_a(1))^{-}$. 
Thus $(\Z_{2^m},\sigma,\tau)$, with
\[
\sigma_a(b)=(\lambda_a(1))^{-}\circ b=
(-1)^a+(-1)^{{(-1)}^a}b=
\begin{cases}1-b, \quad{\rm if}\; a\; {\rm is\; even,}\\
-1-b=(2^m-1)(1+b), \quad{\rm if}\; a\; {\rm is\; odd}
\end{cases}
\]
is a uniconnected solution with the permutation group  $\mathcal{G}(\Z_{2^m})$ isomorphic to the dihedral group $D_{2^m}$. Since the parity of both $1-b$ and $-1-b$ is the same for arbitrary $b\in \Z_{2^m}$, $(\Z_{2^m},\sigma,\tau)$ is a multipermutation solution of level $2$.  

\smallskip

\noindent
{\bf Case 2.} Let $(\Z_{2^m},+,\circ)$ be a cyclic left brace with the adjoint group being a generalized quaternion group. Then, for $a,b\in B$,
\[
a\circ b=a+(2^{m-1}-1)^ab=\begin{cases}a+b, \quad{\rm if}\; a\; {\rm is\; even,}\\
a+(2^{m-1}-1)b, \quad{\rm if}\; a\; {\rm is\; odd.}
\end{cases}
\]
For $g=1$, 
\[
\lambda_a(1)=a+(2^{m-1}-1)^a-a=\begin{cases}1, \quad{\rm if}\; a\; {\rm is\; even,}\\
2^{m-1}-1, \quad{\rm if}\; a\; {\rm is\; odd.}
\end{cases}
\] 
The uniconnected solution $(\Z_{2^m},\sigma,\tau)$ is defined in the following way:
\[
\sigma_a(b)=(\lambda_a(1))^{-}\circ b=\begin{cases}(1-2^{m-1})(1-b), \quad{\rm if}\; a\; {\rm is\; even,}\\
-1+(2^{m-1}-1)b, \quad{\rm if}\; a\; {\rm is\; odd.}
\end{cases}
\]
As in the previous case, it is $2$-permutational and its permutation group is isomorphic to the generalized quaternion group $Q_{2^m}$.
\end{example}

In Section \ref{sec:constr} we will show that solutions described in Examples \ref{exm:unisol} - \ref{exm:Rump} 
are samples of a more general construction.

\section{Multipermutation solutions of level $2$}\label{sec:mul}
In \cite{JPZ20} the authors, together with Zamojska-Dzienio, presented a method of constructing all the solutions of multipermutation level 2. These solutions fall into two classes: $2$-reductive ones and not $2$-reductive ones. The former ones can be effectively constructed using a set of abelian groups and a family of constants. The non-$2$-reductive solutions can be also easily constructed, using a $2$-reductive solution and a permutation. Let us shortly remind these constructions.

The less complicated class is formed by so called 2-reductive solutions.
Recall that a solution $(X,\sigma,\tau)$ is called {\em $2$-reductive}, if it satisfies, for all $x,y\in X$,
\begin{equation}
 \sigma_{\sigma_y(x)}=\sigma_x. \label{eq:2red}
\end{equation}
By \cite[Lemma 2.7]{JPZ20}, Condition \eqref{eq:2red} is equivalent to the following one:
 \begin{equation}
 \sigma_{\sigma^{-1}_y(x)}=\sigma_x. \label{eq:2red1}
\end{equation}
By \cite[Theorem 7.7]{JPZ20} each $2$-reductive solution is a multipermutation solution of level $2$ and note also (\cite[Proposition 8.2]{GIC12}, \cite[Proposition 4.7]{GI18}) that a square free solution is of level at most $2$ if and only if it is $2$-reductive.

The permutations of the $2$-reductive solutions will be in this article, for some reason explained below,
denoted by~$L_x$ and $\mathbf{R}_x$.
These solutions can be completely characterized as follows:

\begin{theorem}\label{thm:2red}\cite[Theorem 7.8]{JPZ20}
Each $2$-reductive solution ~$(X,L,\mathbf{R})$ is a disjoint union, over a set $I$, of abelian groups
$(A_j,+)=\left<\{c_{i,j}\mid i\in I\}\right>$, for every $j\in I$, with
\begin{equation}\label{eq:7.8}
L_x(y)=y+c_{i,j}\in A_j\quad {\rm and} \quad \mathbf{R}_y(x)= x-c_{j,i}\in A_i,
\end{equation}
where $x\in A_i$ and $y\in A_j$.
\end{theorem}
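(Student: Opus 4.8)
The plan is to unpack what a $2$-reductive solution is and reconstruct the abelian-group data from the permutations themselves. First, recall that by Theorem~\ref{GI:mper} a solution of level $\le 2$ satisfies $\sigma_{\sigma_y(x)}=\sigma_{\sigma_z(x)}$ for all $x,y,z$; the extra assumption of $2$-reductivity, $L_{L_y(x)}=L_x$, is what lets us pin down a genuine group structure on each ``block''. So the first step will be to define the relevant partition of $X$: declare $x\equiv x'$ when $L_x=L_{x'}$, or equivalently work with the retraction relation, and verify using \eqref{eq:2red} and \eqref{eq:2red1} that each $L_x$ maps each block into a single block; index the blocks by a set $I$ and call them $A_i$. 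One checks that the index $j$ of the block containing $L_x(y)$ depends only on the block of $y$, not on $x$, and symmetrically for $\mathbf{R}$.

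The second step is to put the abelian group structure on each $A_j$. Fix $j$ and pick a base point $0_j\in A_j$. For $x\in A_i$ define $c_{i,j}$ by $c_{i,j}:=L_x(0_j)-0_j$ once we know $A_j$ is a group; to bootstrap this, instead define an operation on $A_j$ by transporting: for $y,y'\in A_j$ set $y\boxplus y'$ using the permutation that sends $0_j$ to $y$ (which exists because each $L_x$ restricted to a block is a bijection, and because the solution is non-degenerate the $L_x$'s acting on a fixed target block cover enough translations). The key identities to verify are: (i) $L_x\!\restriction_{A_j}$ is, after the identification, a translation $y\mapsto y+c_{i,j}$; (ii) these translations commute, giving commutativity of $\boxplus$; and (iii) the $c_{i,j}$ with $i\in I$ generate $(A_j,+)$ — this is exactly where one uses that the $\sigma_x$ generate a transitive-enough picture on each block, i.e. the definition forces $A_j=\langle c_{i,j}\mid i\in I\rangle$. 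Commutativity is where $2$-reductivity does the real work: $L_{L_y(x)}=L_x$ says the translation attached to $x$ is unchanged after translating $x$, which after the identification is precisely associativity/commutativity of the $+$ on $A_j$.

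The third step handles $\mathbf{R}$: from the braid relation \eqref{eq:braid} (equivalently, the compatibility $\sigma_x\tau_{\sigma_x(y)}$-type relations that hold in any involutive solution, and involutivity $r^2=\mathrm{id}$) one derives $\mathbf{R}_y(x)=\sigma_x^{-1}$-type formula and then substitutes the translation form of $L$ to get $\mathbf{R}_y(x)=x-c_{j,i}$ for $x\in A_i$, $y\in A_j$. Here one must be careful that the \emph{same} constants $c_{j,i}$ appear, with the roles of the indices swapped; this follows from the left-right compatibility built into the definition of a solution, used at the level of a single pair of blocks.

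The main obstacle I anticipate is the bootstrapping of the group structure on each block in a well-defined, base-point-independent way — showing that the candidate operation $\boxplus$ is genuinely associative, commutative, and has inverses, and that $L_x$ and $\mathbf{R}_y$ become honest translations simultaneously for this one choice of group structure. Everything else is a matter of chasing \eqref{eq:braid} and \eqref{eq:2red} through; but getting the abelian group axioms to fall out cleanly, rather than up to some coboundary ambiguity, is the delicate point, and I expect the proof in \cite{JPZ20} isolates a lemma (essentially: a ``medial'' or entropic groupoid with the reductivity law is an affine copy of an abelian group) to do exactly this.
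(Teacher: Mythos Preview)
The paper does not contain a proof of this statement: Theorem~\ref{thm:2red} is quoted verbatim from \cite[Theorem 7.8]{JPZ20} and used as a black box, with no argument supplied here. There is therefore nothing in the present paper to compare your proposal against.

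That said, your outline is broadly in the right direction for how such results are typically proved, but one point deserves flagging. You propose to take the blocks $A_j$ to be the classes of the relation $x\equiv x'\Leftrightarrow L_x=L_{x'}$, i.e.\ the retraction classes. This is consistent with the formula $L_x(y)=y+c_{i,j}$ (which only depends on the class of~$x$), and the $2$-reductive law $L_{L_y(x)}=L_x$ then says precisely that each $L_y$ preserves these classes, so the set-up is coherent. The delicate step you correctly isolate is turning each block into an abelian group on which the restricted maps $L_x|_{A_j}$ are translations; in the source \cite{JPZ20} this is done via the associated ``trilinear'' or affine-medial structure rather than by an ad hoc $\boxplus$, and the generation condition $A_j=\langle c_{i,j}\mid i\in I\rangle$ is not a consequence of transitivity (no indecomposability is assumed here) but is taken as the \emph{definition} of the group $(A_j,+)$: one lets $A_j$ be the orbit of a chosen base point under the abelian group $\langle L_x|_{A_j}\mid x\in X\rangle$, which is automatically generated by the translations $c_{i,j}$. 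Your concern about base-point independence is real but ultimately harmless, since different choices give isomorphic groups and the constants $c_{i,j}$ shift accordingly.
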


According to \cite[Lemma 3.3]{JPZ20}, for each $2$-reductive solution the group $\langle L_x\mid x\in X\rangle$ is always abelian.
Furthermore, by  \cite[Theorem 6.12]{JPZ20} each multipermutation solution of level $2$ originates from some $2$-reductive one. The procedure is the following.

\begin{theorem} \label{th:2per}\cite[Algorithm 7.13]{JPZ20}
Let $(X,L,\mathbf{R})$ be a $2$-reductive solution. Assume that there exists $a\in X$ with $L_a=\mathrm{id}$ and let $\pi$ be a permutation of the set $X$ satisfying, for $x,y\in X$, the condition:
\begin{align}\label{pi}
L_{\pi(y)}\pi L_x=L_{\pi(x)}\pi L_y.
\end{align}
Then $(X,\sigma,\tau)$ with $\sigma_x=L_x\pi$ and $\tau_y=\pi^{-1}\mathbf{R}_{\pi(y)}$ is 
a multipermutation solution of level $2$.
\end{theorem}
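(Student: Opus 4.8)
The plan is to verify the defining properties of a solution for $(X,\sigma,\tau)$ with $\sigma_x=L_x\pi$ and $\tau_y=\pi^{-1}\mathbf{R}_{\pi(y)}$, and then the criterion~\eqref{eq:2per} (Theorem~\ref{GI:mper}). Two consequences of Theorem~\ref{thm:2red} will carry essentially all the weight: \emph{(i)}~$L_x$ depends only on the block $A_i$ containing $x$, so $L_x=L_{x'}$ whenever $x$ and $x'$ lie in the same block; and \emph{(ii)}~$L_y^{-1}=\mathbf{R}_y$ — for $y\in A_j$, on $A_i$ one has $L_y\colon z\mapsto z+c_{j,i}$ and $\mathbf{R}_y\colon z\mapsto z-c_{j,i}$ — and moreover $\mathbf{R}_y$ maps each block onto itself.

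Non-degeneracy is immediate, since $\sigma_x=L_x\pi$ and $\tau_y=\pi^{-1}\mathbf{R}_{\pi(y)}$ are composites of bijections. For the rest I would invoke the standard fact (see, e.g.,~\cite{ESS}) that for any family of bijections $(\sigma_x)_{x\in X}$ the map $r\colon X^2\to X^2$, $r(x,y)=(\sigma_x(y),\,\sigma_{\sigma_x(y)}^{-1}(x))$, is an involutive bijection, and that it satisfies the braid relation~\eqref{eq:braid} if and only if $\sigma_x\sigma_y=\sigma_{\sigma_x(y)}\,\sigma_{\sigma_{\sigma_x(y)}^{-1}(x)}$ for all $x,y\in X$. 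Thus it suffices to prove: \emph{(a)}~$\sigma_{\sigma_x(y)}^{-1}(x)=\tau_y(x)$ for all $x,y$; and \emph{(b)}~$\sigma_x\sigma_y=\sigma_{\sigma_x(y)}\,\sigma_{\tau_y(x)}$ for all $x,y$. For \emph{(a)}: since $\sigma_x(y)=L_x(\pi(y))$ lies in the same block as $\pi(y)$, fact~\emph{(i)} gives $\sigma_{\sigma_x(y)}=L_{\sigma_x(y)}\pi=L_{\pi(y)}\pi$, hence by~\emph{(ii)} $\sigma_{\sigma_x(y)}^{-1}=\pi^{-1}L_{\pi(y)}^{-1}=\pi^{-1}\mathbf{R}_{\pi(y)}=\tau_y$. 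For \emph{(b)}: the left side is $L_x\pi L_y\pi$; writing $x':=\mathbf{R}_{\pi(y)}(x)$ we have $\tau_y(x)=\pi^{-1}(x')$ and, by~\emph{(ii)}, $x'$ lies in the same block as $x$, so $L_{x'}=L_x$; hence the right side equals $L_{\pi(y)}\,\pi\,L_{\pi^{-1}(x')}\,\pi$. Applying the hypothesis~\eqref{pi} with its two arguments set to $\pi^{-1}(x')$ and $y$ yields $L_{\pi(y)}\,\pi\,L_{\pi^{-1}(x')}=L_{x'}\,\pi\,L_y=L_x\,\pi\,L_y$, so the right side also equals $L_x\pi L_y\pi$. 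This proves \emph{(b)}, so $(X,\sigma,\tau)$ is a solution.

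Finally, condition~\eqref{eq:2per}: for every $y$ the element $\sigma_y(x)=L_y(\pi(x))$ lies in the block of $\pi(x)$, so by~\emph{(i)} $\sigma_{\sigma_y(x)}=L_{\pi(x)}\pi$, independent of $y$; hence $\sigma_{\sigma_y(x)}=\sigma_{\sigma_z(x)}$ for all $x,y,z$, and Theorem~\ref{GI:mper} gives multipermutation level~$2$. I expect the one genuinely delicate point to be spotting the right specialization of~\eqref{pi}: once fact~\emph{(i)} lets one replace every $L$-index arising from a $\sigma$- or $\tau$-value by a block representative, the identity in~\emph{(b)} becomes literally the instance of~\eqref{pi} at $\bigl(\pi^{-1}\mathbf{R}_{\pi(y)}(x),\,y\bigr)$; everything else is routine bookkeeping with Theorem~\ref{thm:2red}. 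It is worth noting that the hypothesis $L_a=\mathrm{id}$ plays no role in this verification — it is a normalization needed only when one wants to recover $\pi$ from a prescribed solution.
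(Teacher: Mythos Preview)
Your argument is correct. The paper does not actually prove this theorem; it is quoted verbatim from~\cite[Algorithm 7.13]{JPZ20} and used as a black box, so there is no ``paper's own proof'' to compare against. What you have written is a clean self-contained verification that exploits exactly the right structural fact from Theorem~\ref{thm:2red}: that $L_x$ (and $\mathbf R_x$) depend only on the block of~$x$, and that $\mathbf R_y=L_y^{-1}$. With this in hand, part~\emph{(a)} reduces to $L_{\sigma_x(y)}=L_{\pi(y)}$, part~\emph{(b)} becomes precisely the instance of~\eqref{pi} at $(\pi^{-1}\mathbf R_{\pi(y)}(x),y)$, and the level-$2$ criterion~\eqref{eq:2per} is immediate. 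Your closing remark is also accurate: the hypothesis $L_a=\mathrm{id}$ is not used in the verification that $(X,\sigma,\tau)$ is a solution of multipermutation level~$2$; in~\cite{JPZ20} it serves to normalise $\pi$ so that one can go back and forth between a level-$2$ solution and its $2$-reductive isotope (cf.\ Theorem~\ref{th:2persol}).
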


On the other hand, each multipermutation solution of level $2$ defines a $2$-reductive one.
\begin{theorem}\cite[Theorem 7.12]{JPZ20} \label{th:2persol}
Let $(X,\sigma,\tau)$ be a multipermutation solution of level~$2$
and $e\in X$. Then $(X,L,\mathbf{R})$, where $L_x=\sigma_x\sigma_e^{-1}$ and $\mathbf{R}_y=\sigma_e\tau_{\sigma_e^{-1}(y)}$, for $x,y\in X$, is a $2$-reductive solution.
\end{theorem}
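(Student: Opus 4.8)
The statement to prove is Theorem~\ref{th:2persol}: given a multipermutation solution $(X,\sigma,\tau)$ of level~$2$ and a chosen element $e\in X$, the pair of maps $L_x=\sigma_x\sigma_e^{-1}$, $\mathbf{R}_y=\sigma_e\tau_{\sigma_e^{-1}(y)}$ defines a $2$-reductive solution on~$X$.

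The plan is to verify the two defining conditions in turn: first that $(X,L,\mathbf{R})$ is a (non-degenerate involutive set-theoretic) solution, and second that it satisfies the $2$-reductivity identity~\eqref{eq:2red}. For the second part, the key tool is the Gateva-Ivanova characterization, Theorem~\ref{GI:mper}: since $(X,\sigma,\tau)$ has level~$2$, we have $\sigma_{\sigma_y(x)}=\sigma_{\sigma_z(x)}$ for all $x,y,z$, and in particular, taking $z=e$ (or rather using that $\sigma_e^{-1}(x)$ is of the form $\sigma_z(x)$ for a suitable $z$, or simply that the value $\sigma_{\sigma_y(x)}$ does not depend on~$y$), the map $y\mapsto\sigma_{\sigma_y(x)}$ is constant. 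Then I would compute
\begin{align*}
L_{L_y(x)}=\sigma_{L_y(x)}\sigma_e^{-1}=\sigma_{\sigma_y(\sigma_e^{-1}(x))}\sigma_e^{-1},
\end{align*}
and observe that by level-$2$-ness the inner index $\sigma_y(\sigma_e^{-1}(x))$ can be replaced by $\sigma_e(\sigma_e^{-1}(x))=x$ without changing $\sigma$ applied to it, giving $\sigma_{x}\sigma_e^{-1}=L_x$, which is exactly~\eqref{eq:2red}. One must be slightly careful that $\sigma_{\sigma_y(w)}$ depends only on~$w$, not on~$y$, which is the content of~\eqref{eq:2per}; this handles the $L$ side, and the $\mathbf{R}$ side then follows automatically from non-degeneracy and involutivity of the solution once that is established.

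For the first part — that $(X,L,\mathbf{R})$ is itself a solution — I expect the cleanest route is to recognize it as an instance of Theorem~\ref{th:2per} run in reverse, or to check the braid relation directly. The direct check: one needs $L_x$, $\mathbf{R}_y$ bijective (clear, as compositions of bijections), $\mathbf{R}_{L_x(y)}(L_{\mathbf{R}_y(x)}(\_))=\ldots$ type identities encoding~\eqref{eq:braid} together with involutivity $r^2=\mathrm{id}$. Here the natural strategy is to write everything in terms of $\sigma,\tau$ and $\sigma_e$, substitute, and use the braid relations and non-degeneracy relations that $(X,\sigma,\tau)$ already satisfies (e.g.\ $\sigma_x\tau_{\ldots}$ identities from the involutive YBE), together with the level-$2$ identity to absorb the ``shift by $\sigma_e$''. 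In particular the combination $L_x=\sigma_x\sigma_e^{-1}$ is designed so that the family $\{L_x\}$ generates an abelian group — that is \cite[Lemma 3.3]{JPZ20} in the other direction — and abelianness of $\langle L_x\rangle$ is essentially equivalent to~\eqref{eq:2per}, so this is the same computation as in the second part.

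The main obstacle I anticipate is purely bookkeeping: keeping track of which variable each $\sigma_e$ or $\sigma_e^{-1}$ is attached to when expanding the braid relation for $(X,L,\mathbf{R})$, and correctly invoking~\eqref{eq:2per} at the right moment to cancel the extra $\sigma_e^{\pm1}$ factors. There is no deep idea beyond Theorem~\ref{GI:mper}; the risk is an index slip. A safer alternative, which I would actually prefer to write up, is to avoid the braid-relation grind entirely: set $\pi=\sigma_e$, note $L_e=\sigma_e\sigma_e^{-1}=\mathrm{id}$ so the hypothesis ``there exists $a$ with $L_a=\mathrm{id}$'' is met, verify that $\pi=\sigma_e$ satisfies condition~\eqref{pi} (which again reduces to~\eqref{eq:2per} after one rewriting), and then \emph{recover} $(X,\sigma,\tau)$ from $(X,L,\mathbf{R})$ via Theorem~\ref{th:2per}, since $L_x\pi=\sigma_x\sigma_e^{-1}\sigma_e=\sigma_x$ and $\pi^{-1}\mathbf{R}_{\pi(y)}=\sigma_e^{-1}\sigma_e\tau_{\sigma_e^{-1}(\sigma_e(y))}=\tau_y$. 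This shows $(X,\sigma,\tau)$ is the solution produced from the data $(X,L,\mathbf{R})$, $\pi=\sigma_e$ — but to run Theorem~\ref{th:2per} one needs $(X,L,\mathbf{R})$ to already be known to be a $2$-reductive solution, so this is somewhat circular and really only confirms the relationship; the honest proof still requires checking that $(X,L,\mathbf{R})$ satisfies the solution axioms and~\eqref{eq:2red} from scratch, as sketched above.
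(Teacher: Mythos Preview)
The paper does not prove this theorem: it is quoted verbatim from \cite[Theorem~7.12]{JPZ20} and used as a black box, so there is no proof in the paper to compare against.

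On the merits of your sketch: the $2$-reductivity argument is correct and complete. Writing $L_{L_y(x)}=\sigma_{\sigma_y(\sigma_e^{-1}(x))}\sigma_e^{-1}$ and applying~\eqref{eq:2per} with $z=e$ to replace the index by $\sigma_e(\sigma_e^{-1}(x))=x$ is exactly the right move, and it gives~\eqref{eq:2red} in one line.

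For the first part you are right that invoking Theorem~\ref{th:2per} would be circular, and right that what remains is a direct verification. One economical way to organise it, which avoids the full braid-relation expansion you are wary of: for involutive non-degenerate solutions the $\tau$-maps are forced by $\tau_y(x)=\sigma_{\sigma_x(y)}^{-1}(x)$, and the whole structure is equivalent to the single identity $\sigma_x\sigma_{\sigma_x^{-1}(y)}=\sigma_y\sigma_{\sigma_y^{-1}(x)}$. Checking that $L_x L_{L_x^{-1}(y)}=L_y L_{L_y^{-1}(x)}$ follows from the corresponding identity for~$\sigma$ together with~\eqref{eq:2per} (to absorb the floating $\sigma_e^{\pm1}$ in the subscripts) is a short computation, and then one checks that the stated $\mathbf{R}_y$ agrees with $L_{L_x(y)}^{-1}(x)$: using~\eqref{eq:2per} once more, $\sigma_e\tau_{\sigma_e^{-1}(y)}(x)=\sigma_e\sigma_{\sigma_x(\sigma_e^{-1}(y))}^{-1}(x)=\sigma_e\sigma_y^{-1}(x)=L_{L_x(y)}^{-1}(x)$. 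So the bookkeeping you anticipated is real but modest, and your diagnosis of where~\eqref{eq:2per} enters is accurate.
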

\noindent
We call the solution $(X,L,\mathbf{R})$ from Theorem \ref{th:2persol} the $\sigma_e^{-1}$-\emph{isotope} of $(X,\sigma,\tau)$. 
From now on, we shall assume that one such 2-reductive solution is always associated to our solution of multipermutation level 2.
Moreover, since $L_e=\mathrm{id}$, by \cite[Theorem 6.12]{JPZ20}, the solution $(X,L,\mathbf{R})$ satisfies Condition \eqref{pi} for $\pi=\sigma_e$, that
means $\sigma_x=L_x\pi=L_x\sigma_e$, for each~$x\in X$.

\subsection*{Displacement group}


Note that, for every $x,y\in X$,
\begin{equation}\label{dis}
L_xL_y^{-1}=L_x\pi\pi^{-1}L_y^{-1}=\sigma_x\sigma_y^{-1}\qquad
\text{ and }\qquad
L_xL_e^{-1}=L_x.
\end{equation}
This implies that, for a multipermutation solution of level~$2$, the group
$$\langle\sigma_x\sigma_y^{-1}\mid x,y\in X\rangle=\langle L_xL_y^{-1}\mid x,y\in X\rangle
=\langle L_x\mid x\in X\rangle$$
is an abelian subgroup of $\mathcal{G}(X)$.

Such groups appear, for example, in the theory of racks and quandles under names: \emph{displacement groups} \cite{HSV} or  \emph{transvection groups} \cite{J82}. They play important r\^ole in the theory  since some properties of racks or quandles may be described by properties of these groups. 
In Yang-Baxter equation solutions theory these groups 
have gained very little attention so far. A  probable reason is that,
given a left brace~$(B,+,\circ)$, we have $\lambda_{0}=\mathrm{id}$ and
therefore $\mathcal{G}(B)=\langle\sigma_x\sigma_y^{-1}\mid x,y\in B\rangle$, for the associated solution~$(B,\sigma,\tau)$.
Nevertheless, it turns out that
these groups are crucial when
studying indecomposable multipermutation solutions of level~2
 (see e.g. Proposition \ref{prop:510}).  
Let us then denote the group by $\dis{X}$ and call it \emph{the displacement group of the solution}  $(X,\sigma,\tau)$.

For the rest of the section we fix the following notation: let us have a solution $(X,\sigma,\tau)$
which is a multipermutation solution of level~2 and fix an element $\widetilde 0\in X$. The choice of the element~$\widetilde 0$ might be of some relevance for decomposable solutions but
does not matter at all for indecomposable solutions because, as we shall see in~Proposition~\ref{prop:Auttrans}, the automorphism group of such solutions is transitive.
Let us denote $\pi=\sigma_{\widetilde 0}$ and let
$(X,L,\mathbf{R})$ be the $\pi^{-1}$-isotope defined in Theorem \ref{th:2persol}. We also, for
each~$i\in\Z$, denote $\widetilde i=\pi^i(\widetilde 0)$. The set~$\{\widetilde i\mid i\in\Z\}$ is a subset of~$X$
only but it turns out that it may be an important subset of~$X$. We also denote $D_x=L_xL_{\pi^{-1}(x)}^{-1}$, for each~$x\in X$.

\begin{lemma}
Let  $(X,\sigma,\tau)$ be a multipermutation solution of level $2$. For each~$x\in X$ and $i\in\Z$, we have
\begin{align}
\pi L_x&=L_{\widetilde 1}^{-1}L_{\pi(x)}\pi\label{piLx}\\
L_x\pi&=\pi L_{\pi^{-1}(x)}L_{\widetilde {-1}}^{-1}\label{Lxpi}\\
\pi^i L_x \pi^{-i}&=L_{\widetilde i}^{-1} L_{\pi^i(x)} \label{piLxpi}\\
\pi D_x&=D_{\pi(x)}\pi\label{piDx}\\
\sigma_{\widetilde i}&=\sigma_{\widetilde 1}^i\sigma_{\widetilde 0}^{1-i}\label{sigma_i}\\
\sigma_{\alpha(x)}&=\sigma_{x},\text{ for all }\alpha\in\dis{X}.\label{eq:disX}
\end{align}
\end{lemma}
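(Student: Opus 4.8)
The plan is to derive the six identities in sequence, each from the previous ones together with the basic relations $\sigma_x = L_x\pi$, $L_{\widetilde 0}=\mathrm{id}$, and the defining condition~\eqref{pi} for $\pi=\sigma_{\widetilde 0}$. First I would establish \eqref{piLx}. Starting from the $\pi$-relation $L_{\pi(y)}\pi L_x = L_{\pi(x)}\pi L_y$, specialize $y=\widetilde 0$: since $L_{\widetilde 0}=\mathrm{id}$ and $\pi(\widetilde 0)=\widetilde 1$, this gives $L_{\widetilde 1}\pi L_x = L_{\pi(x)}\pi$, i.e. $\pi L_x = L_{\widetilde 1}^{-1} L_{\pi(x)}\pi$. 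Identity \eqref{Lxpi} is the same relation rewritten: replace $x$ by $\pi^{-1}(x)$ in \eqref{piLx} and rearrange, noting $\pi(\pi^{-1}(x))=x$ and that $\widetilde{-1}=\pi^{-1}(\widetilde 0)$, so that $L_{\widetilde 1}^{-1}$ becomes $L_{\widetilde{-1}}^{-1}$ after conjugating/moving $\pi$ across — more directly, multiply \eqref{piLx} on the left by $\pi^{-1}L_{\widetilde 1}$ and on the right by $\pi^{-1}$ and relabel.

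Next, \eqref{piLxpi} follows by induction on $i\ge 0$ using \eqref{piLx} repeatedly: $\pi^{i+1}L_x\pi^{-(i+1)} = \pi(\pi^i L_x\pi^{-i})\pi^{-1} = \pi L_{\widetilde i}^{-1}L_{\pi^i(x)}\pi^{-1}$, and applying \eqref{piLx} to both $L_{\widetilde i}^{-1}$ and $L_{\pi^i(x)}$ and using that the group $\langle L_x\rangle$ is abelian (by \cite[Lemma 3.3]{JPZ20}, recalled in the excerpt) lets one collapse the $L_{\widetilde 1}^{\pm 1}$ factors; the telescoping gives $L_{\widetilde{i+1}}^{-1}L_{\pi^{i+1}(x)}$, using $L_{\widetilde{i+1}} = L_{\widetilde 1}\cdot\pi L_{\widetilde i}\pi^{-1}\cdot\text{(abelian bookkeeping)}$. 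The case $i<0$ is obtained symmetrically from \eqref{Lxpi}, or by inverting. For \eqref{piDx}, expand $D_x = L_x L_{\pi^{-1}(x)}^{-1}$ and compute $\pi D_x\pi^{-1} = (\pi L_x\pi^{-1})(\pi L_{\pi^{-1}(x)}\pi^{-1})^{-1}$; each factor is rewritten by \eqref{piLxpi} with $i=1$, the $L_{\widetilde 1}^{-1}$ contributions cancel (again by commutativity of the displacement group), and what remains is $L_{\pi(x)}L_x^{-1} = D_{\pi(x)}$, giving $\pi D_x = D_{\pi(x)}\pi$.

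For \eqref{sigma_i}, write $\sigma_{\widetilde i} = L_{\widetilde i}\pi$ and prove $L_{\widetilde i} = (L_{\widetilde 1}\pi)^i\pi^{-i}$, equivalently $\sigma_{\widetilde i} = \sigma_{\widetilde 1}^i\pi^{1-i} = \sigma_{\widetilde 1}^i\sigma_{\widetilde 0}^{1-i}$, by induction using $\sigma_{\widetilde{i+1}} = L_{\widetilde{i+1}}\pi$ and the recursion $L_{\widetilde{i+1}} = L_{\widetilde 1}\cdot(\pi L_{\widetilde i}\pi^{-1})$ coming from \eqref{piLx} applied at $x=\widetilde i$ (note $\pi(\widetilde i)=\widetilde{i+1}$); care is needed with the order of factors, but since all $L$'s commute this is routine. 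Finally \eqref{eq:disX}: the displacement group is generated by the $D_x$, so it suffices to check $\sigma_{D_x(z)} = \sigma_z$ for a generator $\alpha = D_x$; by \eqref{eq:2per} (Theorem~\ref{GI:mper}) applied twice, $\sigma_{L_u(z)} = \sigma_{\sigma_u\sigma_{\widetilde 0}^{-1}(z)}$ depends only on $z$ up to the level-2 condition, and since $D_x = L_xL_{\pi^{-1}(x)}^{-1}$ is a product of such level-lowering maps, $\sigma$ is constant on its orbits — more carefully, \eqref{eq:2per} says $\sigma_{\sigma_a(b)}$ is independent of $a$, hence $\sigma_{\sigma_a\sigma_c^{-1}(b)} = \sigma_b$ for all $a,c$, i.e. $\sigma_{L_a(b)}=\sigma_b$, and iterating along the word for $\alpha$ in the generators $L_aL_c^{-1}$ yields the claim for every $\alpha\in\dis X$.

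The main obstacle is purely bookkeeping: keeping track of which $L_{\widetilde i}^{\pm 1}$ factors appear and in what order when conjugating by powers of $\pi$, and invoking commutativity of $\langle L_x\mid x\in X\rangle$ at exactly the right moments so the spurious factors telescope. Once \eqref{piLx} is in hand and one remembers that the displacement group is abelian, everything else is a short induction; the only genuinely separate idea is \eqref{eq:disX}, which rests on reading Theorem~\ref{GI:mper} as "$\sigma_{L_x(z)}=\sigma_z$" and extending multiplicatively.
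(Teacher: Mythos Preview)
Your proposal is correct and follows essentially the same route as the paper: specialize \eqref{pi} at $y=\widetilde 0$ to get \eqref{piLx}, then induct for \eqref{piLxpi}, expand $D_x$ and cancel using abelianness of $\dis{X}$ for \eqref{piDx}, and induct again for \eqref{sigma_i}. Two small points where the paper is slightly cleaner: for \eqref{Lxpi} the paper substitutes $x=\widetilde{-1}$, $y=\pi^{-1}(x)$ directly into \eqref{pi} rather than deriving it from \eqref{piLx} (your derivation works but tacitly needs the extra instance $\pi L_{\widetilde{-1}}=L_{\widetilde 1}^{-1}\pi$ plus commutativity), and for \eqref{eq:disX} the paper invokes the $2$-reductive identities \eqref{eq:2red}--\eqref{eq:2red1} of the isotope $(X,L,\mathbf{R})$, which immediately give $L_{L_y^{\pm 1}(x)}=L_x$, whereas you re-derive the equivalent statement $\sigma_{\sigma_a\sigma_c^{-1}(b)}=\sigma_b$ from Theorem~\ref{GI:mper}; both arguments are equally short.
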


\begin{proof}
 Using Equation~\eqref{pi} with $y=\widetilde 0$ we get
 \[ L_{\pi(x)}\pi L_{\widetilde 0}=L_{\widetilde 1}\pi L_x\]
 and since~$L_{\widetilde 0}=\mathrm{id}$, we obtain~\eqref{piLx}. Using Equation~\eqref{pi} with $x=\widetilde {-1}$ and $y=\pi^{-1}(x)$ we get
 \[ L_{\widetilde 0}\pi L_{\pi^{-1}(x)}=L_{x}\pi L_{\widetilde {-1}}\]
 and thus we obtain~\eqref{Lxpi}. In particular, by~\eqref{piLx} and ~\eqref{Lxpi} we have  \[
 \pi L_x^{-1}=L_{\pi(x)}^{-1}L_{\widetilde{1}}\pi.
 \]
The equation~\eqref{piLxpi} is trivial for~$i=0$, then, by induction and commutativity of the group $\dis{X}$,
 \begin{align*}
 \pi \pi^i L_x\pi^{-i} \pi^{-1} &= \pi L_{\widetilde i}^{-1} L_{\pi^i(x)}\pi^{-1}
  \stackrel{\eqref{piLx}}{=}L_{\widetilde{i+1}}^{-1}L_{\widetilde 1}\pi L_{\pi^i(x)}\pi^{-1}  \stackrel{\eqref{piLx}}{=}
 L_{\widetilde{i+1}}^{-1}L_{\widetilde 1}L_{\widetilde 1}^{-1} L_{\pi^{i+1}(x)}=
  L_{\widetilde{i+1}}^{-1}L_{\pi^{i+1}(x)},\\
  \pi^{-1} \pi^i L_x\pi^{-i} \pi &= \pi^{-1} L_{\widetilde i}^{-1} 
  L_{\pi^i(x)}\pi\stackrel{\eqref{Lxpi}}{=} 
 L_{\widetilde{-1}}L^{-1}_{\widetilde{i-1}}\pi^{-1}L_{\pi^i(x)}\pi\stackrel{\eqref{Lxpi}}{= }  
 L_{\widetilde{-1}}L_{\widetilde{i-1}}^{-1}L_{\pi^{i-1}(x)}L_{\widetilde{-1}}^{-1}
   =L_{\widetilde{i-1}}^{-1}L_{\pi^{i-1}(x)}.
  \end{align*}
Further by commutativity of the group 
 $\dis{X}$,
 \begin{multline*}
 \pi D_x=\pi L_x L_{\pi^{-1}(x)}^{-1}\stackrel{\eqref{piLx}}{=}
 L_{\pi(x)}L_{\widetilde 1}^{-1}\pi L_{\pi^{-1}(x)}^{-1}=
 L_{\pi(x)}L_{\widetilde 1}^{-1}  (L_{\pi^{-1}(x)}\pi^{-1})^{-1}
 \stackrel{\eqref{piLx}}{=}\\
 L_{\pi(x)}L_{\widetilde 1}^{-1}
 (\pi^{-1}L_{x}L_{\widetilde 1}^{-1})^{-1}=
 L_{\pi(x)}L_{\widetilde 1}^{-1} L_{\widetilde 1}
 L_{x}^{-1} \pi=D_{\pi(x)}\pi. 
 \end{multline*}
 Then, by an induction on~$i$,
 \begin{align*}
 \sigma_{\widetilde{i+1}}&=L_{\widetilde{i+1}}\pi\stackrel{\eqref{piLx}}{=}
  L_{\widetilde 1}\pi L_{\widetilde i}=\sigma_{\widetilde 1}\sigma_{\widetilde i}\pi^{-1}=
  \sigma_{\widetilde 1}\sigma^i_{\widetilde 1}\sigma_{\widetilde 0}^{1-i}\sigma_{\widetilde 0}^{-1}
  =\sigma^{i+1}_{\widetilde 1}\sigma_{\widetilde 0}^{-i},
  \end{align*}
 and
  \begin{align*}
  \sigma_{\widetilde{i-1}}&=L_{\widetilde{i-1}}\pi\stackrel{\eqref{piLx}}{=}
  \pi^{-1}L_{\widetilde 1}^{-1}L_{\widetilde i}\pi^2=\sigma_{\widetilde 1}^{-1}\sigma_{\widetilde i}\sigma_{\widetilde 0}=\sigma_{\widetilde 1}^{-1}\sigma^i_{\widetilde 1}\sigma_{\widetilde 0}^{1-i}\sigma_{\widetilde 0} 
  =\sigma^{i-1}_{\widetilde 1}\sigma_{\widetilde 0}^{2-i}.
 \end{align*}
 Finally, for $\alpha=\prod{L^{\varepsilon_j}_j}$, where $\varepsilon_j\in \{-1,1\}$, we obtain $\sigma_{\alpha(x)}=L_{\prod{L^{\varepsilon_j}_j}(x)}\pi\stackrel{\eqref{eq:2red}+\eqref{eq:2red1}}{=}L_x\pi=\sigma_x$.
\end{proof}


\begin{proposition}\label{prop:normal}
Let  $(X,\sigma,\tau)$ be a multipermutation solution of level $2$. Then $\dis{X}$ is a normal abelian subgroup of~$\mathcal{G}(X)$ and, for each $e\in X$,
$$\mathcal{G}(X)=\langle\dis{X}\cup\{\sigma_e\}\rangle.$$
Moreover,
\[\dis{X}=\langle \sigma_x\sigma_{e}^{-1}\mid x\in X\rangle=\langle L_x\mid x\in X\rangle=\left\{ \prod \sigma_{x_i}^{\varepsilon_i} \mid \sum \varepsilon_i=0\right\}.
\]


\end{proposition}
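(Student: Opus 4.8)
The plan is to deduce everything from the identities in the preceding lemma together with the fact, recalled above from \cite[Lemma 3.3]{JPZ20}, that $\langle L_x\mid x\in X\rangle$ is abelian for a $2$-reductive solution. First I would observe that by~\eqref{dis} the sets $\{\sigma_x\sigma_y^{-1}\mid x,y\in X\}$, $\{L_xL_y^{-1}\mid x,y\in X\}$ and $\{L_x\mid x\in X\}$ all generate one and the same subgroup, namely $\dis{X}$; in particular $\dis{X}=\langle L_x\mid x\in X\rangle$ and this group is abelian.

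Next I would prove normality. It suffices to show that every generator $\sigma_z=L_z\pi$ of $\mathcal{G}(X)$ normalizes $\dis{X}$. The element $L_z$ lies in $\dis{X}$ and hence normalizes it, while $\pi=\sigma_{\widetilde 0}$ normalizes $\dis{X}$ because, by~\eqref{piLx} (equivalently~\eqref{piLxpi} with $i=1$), $\pi L_x\pi^{-1}=L_{\widetilde 1}^{-1}L_{\pi(x)}\in\dis{X}$ for each generator $L_x$ of $\dis{X}$. Thus each $\sigma_z$ normalizes $\dis{X}$, and since $\mathcal{G}(X)=\langle\sigma_z\mid z\in X\rangle$ we get $\dis{X}\trianglelefteq\mathcal{G}(X)$.

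Now fix $e\in X$. Since $\sigma_x\sigma_e^{-1}\in\dis{X}$ for every $x$, we have $\sigma_x=(\sigma_x\sigma_e^{-1})\,\sigma_e\in\langle\dis{X}\cup\{\sigma_e\}\rangle$, and as the $\sigma_x$ generate $\mathcal{G}(X)$ this gives $\mathcal{G}(X)=\langle\dis{X}\cup\{\sigma_e\}\rangle$. The inclusion $\langle\sigma_x\sigma_e^{-1}\mid x\in X\rangle\subseteq\dis{X}$ is trivial, and the reverse one follows by writing each generator of $\dis{X}$ as $\sigma_x\sigma_y^{-1}=(\sigma_x\sigma_e^{-1})(\sigma_y\sigma_e^{-1})^{-1}$; hence $\dis{X}=\langle\sigma_x\sigma_e^{-1}\mid x\in X\rangle$. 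Finally, for the exponent-sum description, set $N:=\{\prod\sigma_{x_i}^{\varepsilon_i}\mid\sum\varepsilon_i=0\}$. Then $N$ is a subgroup of $\mathcal{G}(X)$ (concatenating two admissible words is admissible, reversing and negating the exponents of one is admissible, and the empty word is admissible) and it contains every generator $\sigma_x\sigma_y^{-1}$ of $\dis{X}$, so $\dis{X}\subseteq N$. For the converse I would pass to the quotient $\mathcal{G}(X)/\dis{X}$, legitimate by the normality just established: there all classes $\overline{\sigma_x}$ coincide, say with $\bar\sigma$, so an element $\prod\sigma_{x_i}^{\varepsilon_i}$ with $\sum\varepsilon_i=0$ has class $\bar\sigma^{\sum\varepsilon_i}=1$ and therefore lies in $\dis{X}$; thus $N\subseteq\dis{X}$ and equality holds.

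The argument is largely formal bookkeeping; the single step carrying real content is the normality of $\dis{X}$, which rests on identity~\eqref{piLx} (making $\pi$, and hence all of $\mathcal{G}(X)$, conjugate $\dis{X}$ into itself) and on the commutativity input \cite[Lemma 3.3]{JPZ20}. Once normality is available, the fact that $\mathcal{G}(X)/\dis{X}$ is generated by the common image $\bar\sigma$, which is all that the exponent-sum characterization needs, is immediate.
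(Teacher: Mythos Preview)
Your proof is correct and coincides with the paper's argument for the abelianness, normality, the equality $\mathcal{G}(X)=\langle\dis{X}\cup\{\sigma_e\}\rangle$, and the generation of $\dis{X}$ by the elements $\sigma_x\sigma_e^{-1}$. The only substantive divergence is in the exponent-sum description $\dis{X}=\{\prod\sigma_{x_i}^{\varepsilon_i}\mid\sum\varepsilon_i=0\}$: the paper proves the inclusion $N\subseteq\dis{X}$ by an induction on the word length, splitting into the cases $\varepsilon_1=\varepsilon_n$ and $\varepsilon_1=-\varepsilon_n$ and using normality to rewrite conjugates. Your route via the quotient $\mathcal{G}(X)/\dis{X}$, observing that all $\overline{\sigma_x}$ collapse to a single element $\bar\sigma$ so that a word of exponent sum zero has trivial image, is shorter and more conceptual; it isolates exactly what is being used (normality plus the fact that all generators are congruent modulo $\dis{X}$) without any case analysis. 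The paper's induction, by contrast, keeps the argument entirely inside $\mathcal{G}(X)$ and avoids invoking the quotient, but at the cost of more bookkeeping.
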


\begin{proof}
For $x\in X$, we have 
\[\sigma_x=\sigma_x\sigma_{e}^{-1}\sigma_e\in\langle\dis{X}\cup\{\sigma_e\}\rangle.\]
Hence $\mathcal{G}(X)=\langle\dis{X}\cup\{\sigma_e\}\rangle$ and $\dis{X}$ is a normal subgroup by Equation \eqref{piLxpi}.

Now $\sigma_x\sigma_y^{-1}=\sigma_x\sigma_e^{-1}(\sigma_y\sigma_e^{-1})^{-1}$ and therefore $\dis{X}$ is generated by the set $\{\sigma_x\sigma_e^{-1}\mid x\in X\}$. Setting $e=\widetilde 0$ we obtain $\{\sigma_x\sigma_e^{-1}\mid x\in X\}=\{L_x\mid x\in X\}$.

Let $A=\left\{ \prod \sigma_{x_i}^{\varepsilon_i} \mid \sum \varepsilon_i=0\right\}$. By \eqref{dis}, generators of $\dis{X}$ belong to $A$, so we have $\dis{X}\subseteq A$. To prove the converse inclusion, 
consider  $\alpha\in A$ written
in the following form:
\begin{align*}
\alpha=\sigma_{x_1}^{\varepsilon_1}\sigma_{x_2}^{\varepsilon_2}\ldots \sigma_{x_{n-1}}^{\varepsilon_{n-1}}\sigma_{x_n}^{\varepsilon_n}
\end{align*}
with $\sum \varepsilon_i=0$ and $n$ being an even natural number. 

We will prove by induction on $n$ that $\alpha\in\dis{X}$. Clearly, if $n=0$ then $\alpha=\id$, hence suppose $n\geq 2$.

If $\varepsilon_1=\varepsilon_n$ then there is $1<m<n$ such that $\sum_{i<m} \varepsilon_i=0$ and $\sum_{i\geq m} \varepsilon_i=0$. Let $\beta=\sigma_{x_1}^{\varepsilon_1}\ldots \sigma_{x_{m-1}}^{\varepsilon_{m-1}}$ and $\gamma=\sigma_{x_m}^{\varepsilon_m}\ldots\sigma_{x_n}^{\varepsilon_n}$. By the induction hypothesis, $\beta,\gamma\in \dis{X}$, and in consequence $\alpha=\beta\gamma\in\dis{X}$.

If $\varepsilon_1=-\varepsilon_n$
then $\alpha=\sigma_{x_1}^{\varepsilon_1}\beta\sigma_{x_n}^{-\varepsilon_1}$, where
$\beta=\sigma_{x_2}^{\varepsilon_2}\ldots \sigma_{x_{n-1}}^{\varepsilon_{n-1}}$. 
Since $\sum_{2\leq i\leq n-1} \varepsilon_i=0$, once again by the induction hypothesis, $\beta\in \dis{X}$. 
Moreover, 
$\alpha=\sigma_{x_1}^{\varepsilon_1}\beta\sigma_{x_n}^{-\varepsilon_1}=
\sigma_{x_1}^{\varepsilon_1}\beta\sigma_{x_1}^{-\varepsilon_1}\sigma_{x_1}^{\varepsilon_1}\sigma_{x_n}^{-\varepsilon_1}$
and both $\sigma_{x_1}^{\varepsilon_1}\beta\sigma_{x_1}^{-\varepsilon_1}$ and $\sigma_{x_1}^{\varepsilon_1}\sigma_{x_n}^{-\varepsilon_1}$ lie in $\dis{X}$ since $\dis{X}$ is a normal subgroup.
\end{proof}

Proposition \ref{prop:normal} implies that, for $x,y\in X$,
\[
[\sigma_x,\sigma_y]=\sigma_x^{-1}\sigma_y^{-1}\sigma_x\sigma_y\in \dis{X}.
\]
Therefore the commutator subgroup 
$\mathcal{G}(X)'=[\mathcal{G}(X),\mathcal{G}(X)]$ is a subgroup of $\dis{X}$. 
In the case of
connected racks, we have $\mathcal{G}(X)'= \dis{X}$.
This is not true in the case of solutions, as we shall see on the following example.
Moreover, the example also shows that, $\dis{X}$ and $\langle\sigma_e\rangle$ may have a nontrivial intersection.

\begin{example}\cite[Example 3.2]{JPZ21}
 Let $X=\Z_{p^2}$, for some prime~$p$, and let $\sigma_{i}(j)=j+pi+1$ and $\tau_j(i)=i-1-p(j+1)$, for all~$i,j\in X$. 
Then $(X,\sigma,\tau)$ is an indecomposable solution of multipermutation level~2 and $\mathcal{G}(X)\cong \Z_{p^2}$.
 It is easy to see that $\dis{X}=\{ j\mapsto j+pk \mid k\in\Z\}$ and therefore $\sigma_{i}^p\in\dis{X}$, for all~$i\in X$.
\end{example}

 \begin{remark}\label{rem:2m}
Let $n\in \mathbb{N}$, $(G,+,0)$ be an abelian group and $\alpha\in\aut{G}$ be of an order dividing~$n$. Let $(B,+,\circ)$ be a left non-trivial brace with a transitive cycle base $X$. 
Let $g\in X$ and $(B,\sigma,\tau)$ be a uniconnected solution with $\sigma_x(b):=(\lambda_x(g))^{-}\circ b$ and $\sigma_x^{-1}(b)=\lambda_x(g)\circ b$, for 
$x,b\in B$. Note that by \cite [Lemma 3.2]{C21}, in cyclic braces, $g$ is a generator of the additive group $(B,+)$.

It is well known (see e.g. \cite[Proposition 7]{Rump07}) that for a non-zero left brace $(B,+,\circ)$, the solution associated to the factor left brace $B/\Soc(B)$ is equal to the retraction of the solution associated to $(B,+,\circ)$. Further, by results of Castelli \cite[Theorem 3.5]{C21}, the multipermutation level of the solution associated with a brace $(B,+,\circ)$ coincides with the multipermutation level of the uniconnected solution associated with $(B,+,\circ)$. Moreover, by \cite[Proposition 6]{CGS17} the multipermutation level of the solution associated with $(B,+,\circ)$ is equal to $2$ if and only if $\widetilde{B}=B/\Soc(B)\neq \{0\}$ and $\widetilde{B}/\Soc(\widetilde{B})=\{0\}$. 
This gives that for each $x,y\in B$ there exists $s\in \Soc(B)$ such that
$x\circ y=x+y+s$
and in consequence for each $x\in X$ there exists $s_x\in \Soc(B)$ such that
$\sigma_x \sigma_0^{-1}(b)=
s_{x}\circ b$.
Since for $s_1,s_2\in \Soc(B)$, $s_1\circ s_2=s_1+s_2$, each $\gamma\in\dis{B}=\langle \sigma_x\sigma_{0}^{-1}\mid x\in B\rangle$ may be understood as the left translation (with respect to the operation $+$) by some $s\in \Soc(B)$.
Thus, $ \dis{B}$ is isomorphic to a subgroup of $(\Soc(B),+)$. 
If the left brace $(B,+,\circ)$ is cyclic, the group $\dis{B}$ is cyclic too.
\vskip 2mm

Now let $\gamma\in \dis{B}\cap\langle\sigma_0\rangle$. Then there is $s\in \Soc(B)$ such that $\gamma(b)=s\circ b$. On the other hand, there exist $m\in \mathbb{N}$ and $s'\in \Soc(B)$ with $\gamma(b)=\sigma_{0}^{m}(b)=(-m\cdot g+s')\circ b$. 
This implies that $m\cdot g\in \Soc(B)$ and $\sigma_{0}^{m}(b)=s\circ b$, for some $s\in \Soc(B)$.
This gives that the groups $\dis{X}$ and $\langle\sigma_0\rangle$ may have a nontrivial intersection.

Since by assumption $\Soc(B)\neq B$ and if the group $(B,+)$ is cyclic generated by $g$ then $g\notin \Soc(B)$. Hence, for example, groups $\dis{B}$ and $\langle\sigma_0\rangle$ have a trivial intersection if $\langle\sigma_0\rangle\cong \Z_2$.
 \end{remark}

 \begin{example}\label{exm:unisemi}
Let $(B,+,\circ)=(G\times \Z_n,+,\circ)$  be the $\alpha$-semidirect product of trivial braces constructed in Example \ref{exm:unisol}. 
Let~$X$ be a transitive cycle base of~$(B,+,\circ)$, 
 $(g,h)\in X$ and let $(G\times \Z_n, \sigma,\tau)$ with 
 \[
\sigma_{(a,i)}((b,j))=(\alpha^{-h}(b)-\alpha^{i-h}(g),j-h),
\]
for $(a,i),(b,j)\in G\times \Z_n$ be the uniconnected solution  described in Example \ref{exm:unisol}. 

Thus we have
$
\sigma_{(0,0)}((b,j))=(\alpha^{-h}(b)-\alpha^{-h}(g),j-h),
$
and
\[
\sigma^k_{(0,0)}((b,j))=(\alpha^{-kh}(b)-\sum_{r=1}^{k}\alpha^{-rh}(g),j-kh).
\]
Further, for $(a,i)\in G\times \Z_n$,
\[
\sigma_{(a,i)}\sigma^{-1}_{(0,0)}((b,j))=\sigma_{(a,i)}((\alpha^{h}(b)+g,j+h))=
(b+\alpha^{-h}(g)-\alpha^{i-h}(g),j)=
(b,j)+(\alpha^{-h}(g)-\alpha^{i-h}(g),0).
\]

Since $X$ is a transitive cycle base of~$(B,+,\circ)$ and $(g,h)\in X$, the set $\{\alpha^{i-h}(g)\mid i\in \Z_n\}$  generates the group $(G,+,0)$ and $\dis{G\times \Z_n}\cong (G,+,0)$. Moreover, $h$ is a generator of the group $\Z_n$. It is possible to prove that
$\sum_{i=1}^n \alpha^i(b)=0$, thus the order of $\sigma_{(0,0)}$ is equal to $n$ and it is easy to see that $\dis{G\times \Z_n}\cap\langle\sigma_{(0,0)}\rangle=\{\id\}$.
 \end{example}




\begin{example}\label{exm:dihdis}
Let $(\Z_{2^m},\sigma,\tau)$ be a uniconnected solution described in Example \ref{exm:Rump} with the permutation group isomorphic to the dihedral group $D_{2^m}$. In this case $\sigma_0(b)=1-b=\sigma_0^{-1}(b)$, 
and 
\[
\sigma_a\sigma_0^{-1}(b)=\begin{cases}b, \quad{\rm if}\; a\; {\rm is\; even,}\\
(2^m-1)(1-b)=b-2, \quad{\rm if}\; a\; {\rm is\; odd.}
\end{cases}
\]
Hence, $\dis{\Z_{2^m}} \cong(\Soc(\Z_{2^m}),+)=(2\Z_{2^m},+)\cong \Z_{2^{m-1}}$. Moreover $\langle\sigma_0\rangle\cong \Z_2$ and $\dis{\Z_{2^m}}\cap\langle\sigma_0\rangle=\{\id\}$. Consequently, $\mathcal{G}(\Z_{2^m})=\dis{\Z_{2^m}}\rtimes \langle\sigma_0\rangle\cong \Z_{2^{m-1}}\rtimes \Z_{2}$.

Similarly, for a uniconnected solution $(\Z_{2^m},+,\circ)$ with the permutation group isomorphic to the generalized quaternion group $Q_{2^m}$, we have $\dis{\Z_{2^m}}\cong(\Soc(\Z_{2^m}),+)=(2\Z_{2^m},+)$. 

But in this case, $\sigma_0(b)=(1-2^{m-1})(1-b)$, 
and 
\[
\sigma_a\sigma_0^{-1}(b)=\begin{cases}b, \quad{\rm if}\; a\; {\rm is\; even,}\\
(2^{m-1}-2)+b, \quad{\rm if}\; a\; {\rm is\; odd}
\end{cases}
\]
which gives
$\langle\sigma_0\rangle\cong \Z_4$ and $\dis{\Z_{2^m}}\cap\langle\sigma_0\rangle=\{\id,\sigma_0^2\}$. 

Note also that $\sigma_1^2=\sigma_0^2$, which implies $\sigma_1\sigma_0^{-2}=\sigma_1^{-1}$ and
\[
\mathcal{G}(\Z_{2^m})=\langle\alpha:=\sigma_1\sigma_0^{-1},\; \beta:=\sigma_0\mid \alpha^{2^{m-1}}=\id, \; \beta^2=\alpha^{2^{m-2}},\; \beta\alpha\beta^{-1}=\alpha^{-1}\rangle\cong Q_{2^m}.
\]
\end{example}

By \cite[Proposition 4.1]{JPZ21} the abelian  permutation group $\mathcal{G}(X)$ of an indecomposable multipermutation solution $(X,\sigma,\tau)$ of level $2$ is generated by at most two elements and all the permutations $\sigma_x$, for $x\in X$, have the same order.
Surprisingly, the same is also true for   any indecomposable multipermutation solution of level $2$.
\begin{proposition}\label{prop:3}
 Let $(X,\sigma,\tau)$ be an indecomposable
 solution of multipermutation level at most~$2$. Then
 \begin{enumerate}
  \item the permutation group $\mathcal{G}(X)$ is generated by at most two elements,
  \item for all $x\in X$, $\sigma_x$ have the same order,
  \item $\dis{X}=\langle L_{\widetilde i} \mid i\in \Z\rangle=\langle D_{\widetilde i} \mid i\in \Z\rangle$.
 \end{enumerate}
\end{proposition}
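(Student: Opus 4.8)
The key observation is that indecomposability means $\mathcal{G}(X)$ acts transitively on $X$, so every element of $X$ has the form $\alpha(\widetilde 0)$ for some $\alpha\in\mathcal{G}(X)$, and since $\mathcal{G}(X)=\langle\dis{X}\cup\{\sigma_{\widetilde 0}\}\rangle$ by Proposition~\ref{prop:normal} with $\dis{X}$ normal, we may write $\alpha=\beta\pi^i$ with $\beta\in\dis{X}$ and $i\in\Z$. Hence every $x\in X$ equals $\beta(\widetilde i)$ for some $\beta\in\dis{X}$ and $i\in\Z$. Combined with~\eqref{eq:disX}, which says $\sigma_{\beta(y)}=\sigma_y$ for all $\beta\in\dis{X}$, this immediately gives $\sigma_x=\sigma_{\widetilde i}$ for some $i$; and then~\eqref{sigma_i} expresses $\sigma_{\widetilde i}=\sigma_{\widetilde 1}^i\sigma_{\widetilde 0}^{1-i}$, so $\mathcal{G}(X)=\langle\sigma_x\mid x\in X\rangle=\langle\sigma_{\widetilde 0},\sigma_{\widetilde 1}\rangle$, proving~(1).

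For~(2), I would argue that conjugation by elements of $\mathcal{G}(X)$ permutes the set $\{\sigma_x\mid x\in X\}$ transitively, because for $\alpha\in\mathcal{G}(X)$ one has $\alpha\sigma_x\alpha^{-1}=\sigma_{\alpha(x)}$ (this is the standard identity $g\sigma_x g^{-1}=\sigma_{g(x)}$ valid in the permutation group of any solution), and transitivity of the action on $X$ then makes all the $\sigma_x$ conjugate in $\mathcal{G}(X)$; conjugate elements have equal order. Alternatively, and perhaps more in the spirit of the preceding lemma, one can observe that $\sigma_x=\sigma_{\widetilde i}$ for suitable $i$ and that $\sigma_{\widetilde i}=\pi^i\sigma_{\widetilde 0}\pi^{-i}$: indeed $\sigma_{\widetilde i}=\sigma_{\pi^i(\widetilde 0)}=\pi^i\sigma_{\widetilde 0}\pi^{-i}$, so every $\sigma_x$ is conjugate to $\sigma_{\widetilde 0}=\pi$, whence all have the order of $\pi$.

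For~(3), one inclusion is trivial since $L_{\widetilde i},D_{\widetilde i}\in\dis{X}$ by definition. For the reverse, recall from Proposition~\ref{prop:normal} that $\dis{X}=\langle L_x\mid x\in X\rangle$, so it suffices to show each $L_x$ lies in $\langle L_{\widetilde i}\mid i\in\Z\rangle$. Writing $x=\beta(\widetilde i)$ with $\beta\in\dis{X}$ as above, we have $L_x=\sigma_x\pi^{-1}=\sigma_{\widetilde i}\pi^{-1}=L_{\widetilde i}$ using~\eqref{eq:disX} and $\sigma_x=L_x\pi$; hence $\dis{X}=\langle L_{\widetilde i}\mid i\in\Z\rangle$. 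Finally, since $D_{\widetilde i}=L_{\widetilde i}L_{\widetilde{i-1}}^{-1}$ and conversely each $L_{\widetilde i}$ telescopes as $L_{\widetilde i}=D_{\widetilde i}D_{\widetilde{i-1}}\cdots D_{\widetilde 1}$ (as $L_{\widetilde 0}=\mathrm{id}$), the two generating sets coincide, giving $\dis{X}=\langle D_{\widetilde i}\mid i\in\Z\rangle$.

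**Main obstacle.** None of the steps is technically deep; the one point requiring a little care is the reduction "every $x\in X$ is of the form $\beta(\widetilde i)$ with $\beta\in\dis{X}$", which hinges on the factorization $\mathcal{G}(X)=\dis{X}\cdot\langle\pi\rangle$ as a set — this follows from $\dis{X}\trianglelefteq\mathcal{G}(X)$ and $\mathcal{G}(X)=\langle\dis{X},\pi\rangle$, but one should state it cleanly so that the rest is a one-line consequence of~\eqref{eq:disX} and~\eqref{sigma_i}. The rest is bookkeeping with the identities from the preceding lemma.
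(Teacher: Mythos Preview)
Your arguments for (1) and (3) are correct and essentially match the paper's proof: write an arbitrary $x\in X$ as $\beta(\widetilde i)$ with $\beta\in\dis{X}$ using transitivity and normality, apply~\eqref{eq:disX} to get $\sigma_x=\sigma_{\widetilde i}$, then invoke~\eqref{sigma_i} for (1) and telescope the $D_{\widetilde i}$ for (3). (Minor point: for (3) you should also treat $i<0$, where the telescoping runs the other way; the paper records this as~\eqref{L_to_D3}.)

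Your proof of (2), however, is wrong on both counts. The ``standard identity'' $g\sigma_x g^{-1}=\sigma_{g(x)}$ is \emph{not} valid in the permutation group of a solution; it holds for racks and quandles, where the left multiplications act by automorphisms, but not for involutive set-theoretic solutions. Indeed, taking $g=\sigma_{\widetilde 0}=\pi$ and $x=\widetilde 0$ your identity would give $\pi=\pi\sigma_{\widetilde 0}\pi^{-1}=\sigma_{\widetilde 1}$, i.e.\ $L_{\widetilde 1}=\mathrm{id}$; by~\eqref{sigma_i} this forces $\sigma_{\widetilde i}=\sigma_{\widetilde 0}$ for all~$i$ and hence $\mathcal{G}(X)$ cyclic, contradicting the non-abelian examples that are the subject of the paper. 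The same computation kills your alternative: since $\pi=\sigma_{\widetilde 0}$, one has $\pi^i\sigma_{\widetilde 0}\pi^{-i}=\sigma_{\widetilde 0}$, not $\sigma_{\widetilde i}$. More generally, the $\sigma_x$ need not be conjugate at all: in the abelian example $X=\Z_{p^2}$, $\sigma_i(j)=j+pi+1$, the permutations $\sigma_0$ and $\sigma_1$ are distinct elements of a cyclic group, hence non-conjugate, yet both have order~$p^2$.

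The paper proves (2) by a direct computation rather than conjugacy: if $r$ is the order of $\pi$, one expands $(\sigma_{\widetilde i})^r=(L_{\widetilde i}\pi)^r$ using~\eqref{piLxpi} to push all the $\pi$'s to the right, obtaining $\prod_{k=0}^{r-1}L_{\widetilde k}^{-1}L_{\widetilde{i+k}}\cdot\pi^r$; since $\{\widetilde 0,\dots,\widetilde{r-1}\}=\{\widetilde i,\dots,\widetilde{i+r-1}\}$ the product collapses to the identity. Thus every $\sigma_{\widetilde i}$ has order dividing that of $\pi=\sigma_{\widetilde 0}$, and by symmetry of the choice of base point all orders coincide.
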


\begin{proof}
 Let $x\in X$.
 The permutation group $\mathcal{G}(X)$ is transitive and therefore
 there exist $k\in\N$ and $y_1,\ldots,y_k\in X$, such that
  \[x=\sigma_{y_1}^{\varepsilon_1}\sigma_{y_2}^{\varepsilon_2}\cdots \sigma_{y_k}^{\varepsilon_k}(\widetilde 0)=(L_{y_1}\pi)^{\varepsilon_1} (L_{y_2}\pi)^{\varepsilon_2}\cdots (L_{y_k}\pi)^{\varepsilon_k}(\widetilde 0), \; {\rm with}\; \varepsilon_i\in \{-1,1\}.\] 
Since the displacement group is a normal subgroup of $\mathcal{G}(X)$ there is $\alpha\in \dis{X}$,
such that
\[
(L_{y_1}\pi)^{\varepsilon_1} (L_{y_2}\pi)^{\varepsilon_2}\cdots (L_{y_k}\pi)^{\varepsilon_k}(\widetilde 0)=\alpha\pi^k(\widetilde 0).
\] Then, by \eqref{eq:disX}
 \[ \sigma_x=L_x\pi = L_{\alpha\pi^k(\widetilde0)}\pi = L_{\pi^k(\widetilde 0)}\pi=\sigma_{\widetilde k}\stackrel{\eqref{sigma_i}}{=}\sigma_{\widetilde 1}^k\sigma_{\widetilde 0}^{1-k}
 \]
 and therefore the permutation group $\mathcal{G}(X)$ is generated
 by $\sigma_{\widetilde 0}$ and $\sigma_{\widetilde 1}$,
 proving (1).
 
 By an easy inductive argument on $j$, we have, for all integers $i\leq j+1$,
 \begin{equation}
    \prod_{k=i}^j D_{\widetilde k} = L_{\widetilde j}L_{\widetilde{i-1}}^{-1}\label{L_to_D1}.
 \end{equation}
 In particular, for $i=1$ and $j\geq 0$ we obtain
 \begin{equation}
    L_{\widetilde j}=\prod_{k=1}^j D_{\widetilde k}\label{L_to_D2}.
 \end{equation}
Moreover, for $i\leq 0$ and $j=0$ we have
 \begin{equation}
    L_{\widetilde i}=\prod_{k=1+i}^{0} D_{\widetilde {k}}^{-1}.\label{L_to_D3}
 \end{equation}
 
 We have already seen that, for each $x\in X$, there exists~$k\in\Z$ such that
 $\sigma_x=\sigma_{\widetilde k}$ and therefore $L_x=L_{\widetilde k}$.
 Hence $\dis{X}=\langle L_x\mid x\in X\rangle=\langle L_{\widetilde k}\mid k\in \Z\rangle =\langle D_{\widetilde k}\mid k\in \Z\rangle$, proving~(3).

 Finally, let $\pi^r=\mathrm{id}$, for some $r\in\N$. By \eqref{piLxpi} we have for $i,k\in \Z$
 \[
 \pi L^{-1}_{\widetilde{k}}L_{\widetilde{i+k}}=L^{-1}_{\widetilde{k+1}}L_{\widetilde{i+k+1}}\pi.
 \] Hence we obtain 
 \[
  (\sigma_{\widetilde i})^r=(L_{\widetilde i}\,\pi)^r
  \stackrel{\eqref{piLxpi}}{=}
  \prod_{k=0}^{r-1} (L_{\widetilde k}^{-1} L_{\widetilde{i+k}})  \pi^r=
  \prod_{k=0}^{r-1} L_{\widetilde k}^{-1}  \prod_{k=0}^{r-1} L_{\widetilde{i+k}}=
  \mathrm{id}
 \]
 since $\widetilde m=\widetilde{m+r}$, for all~$m\in\Z$,
 and therefore $\{\widetilde 0,\ldots,\widetilde{r-1}\}=\{\widetilde{i},\ldots,\widetilde{i+r-1}\}$.
Hence the order of~$\sigma_{\widetilde i}$ divides the order of~$\sigma_{\widetilde 0}$.
 But $\widetilde 0$ was chosen randomly and thus we obtain~(2). 
 \end{proof}
  
Let us recall that a mapping $\Phi\colon X\to X'$ is a \emph{homomorphism} of two solutions $(X,\sigma,\tau)$ and $(X',\sigma',\tau')$ if, for each $x\in X$,
\[
\Phi\sigma_x=\sigma'_{\Phi(x)}\Phi.
\]
We shall then denote by $\Phi_{\mathcal{G}}$ the homomorphism from the group $\mathcal{G}(X)$ to the group $\mathcal{G}(X')$ defined by $\sigma_x\mapsto \sigma'_{\Phi(x)}$.
And by $\Phi_{D}$ we mean the restriction
of~$\Phi_{\mathcal{G}}$ to the homomorphism
$\dis{X}\to\dis{X'}$. 
Note that, for $\chi\in \mathcal{G}(X)$ and  $x\in X$,
$$\Phi(\chi(x))=\Phi_{\mathcal{G}}(\chi)\Phi(x).$$

\begin{corollary}\label{cor:semi-regular}
 Let $(X,\sigma,\tau)$ and $(Y,\rho,\upsilon)$ be two indecomposable solutions of multipermutation level~$2$. Let $\Phi_1:X\to Y$ and $\Phi_2:X\to Y$
 be two homomorphisms of the solutions. If there exists $e\in X$ such that $\Phi_1(e)=\Phi_2(e)$
 then $\Phi_1=\Phi_2$.
\end{corollary}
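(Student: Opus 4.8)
The plan is to show that a homomorphism between two indecomposable solutions of multipermutation level~$2$ is completely determined by its value at a single point. The mechanism is to combine the transitivity of $\mathcal{G}(X)$ (which is exactly indecomposability) with the fact, recorded in Proposition~\ref{prop:3}(1), that $\mathcal{G}(X)$ is generated by just the two permutations $\sigma_{\widetilde 0}$ and $\sigma_{\widetilde 1}$.

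First I would normalise the set-up. Since for an indecomposable solution the choice of the base point $\widetilde 0$ is irrelevant, I take $\widetilde 0:=e$, set $\pi:=\sigma_e$ and $\widetilde 1:=\pi(e)=\sigma_e(e)$. The key observation is that $\widetilde 1$ is obtained from $e$ by applying $\sigma_e$, so its image under any solution homomorphism is forced: for $i\in\{1,2\}$, using that $\Phi_i$ is a homomorphism, $\Phi_i(\widetilde 1)=\Phi_i(\sigma_e(e))=\rho_{\Phi_i(e)}(\Phi_i(e))$. Hence from the hypothesis $\Phi_1(e)=\Phi_2(e)$ we also get $\Phi_1(\widetilde 1)=\Phi_2(\widetilde 1)$.

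Next I would compare the induced group homomorphisms $\Phi_{1,\mathcal{G}},\Phi_{2,\mathcal{G}}\colon\mathcal{G}(X)\to\mathcal{G}(Y)$, which are well defined by $\Phi_{i,\mathcal{G}}(\sigma_x)=\rho_{\Phi_i(x)}$. By the previous step they agree on the generator $\sigma_{\widetilde 0}$ (because $\Phi_1(e)=\Phi_2(e)$) and on the generator $\sigma_{\widetilde 1}$ (because $\Phi_1(\widetilde 1)=\Phi_2(\widetilde 1)$). Since $\mathcal{G}(X)=\langle\sigma_{\widetilde 0},\sigma_{\widetilde 1}\rangle$ by Proposition~\ref{prop:3}(1), two group homomorphisms out of $\mathcal{G}(X)$ agreeing on these two generators must coincide, so $\Phi_{1,\mathcal{G}}=\Phi_{2,\mathcal{G}}$.

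Finally I would spread this over all of $X$ using transitivity. Given any $x\in X$, indecomposability yields $\chi\in\mathcal{G}(X)$ with $x=\chi(e)$, and then, by the identity $\Phi_i(\chi(x'))=\Phi_{i,\mathcal{G}}(\chi)\Phi_i(x')$ recorded above, $\Phi_i(x)=\Phi_{i,\mathcal{G}}(\chi)(\Phi_i(e))$. As $\Phi_{1,\mathcal{G}}=\Phi_{2,\mathcal{G}}$ and $\Phi_1(e)=\Phi_2(e)$, the two right-hand sides are equal, so $\Phi_1(x)=\Phi_2(x)$; since $x$ was arbitrary, $\Phi_1=\Phi_2$. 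The only point needing any care is the very first one --- justifying the reduction to $\widetilde 0=e$ (equivalently, that Proposition~\ref{prop:3}(1) is available for an arbitrary base point) and recalling that $\Phi_{i,\mathcal{G}}$ is a genuine group homomorphism --- but both facts are already in place, so I expect no real obstacle.
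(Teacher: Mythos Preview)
Your proof is correct and follows essentially the same route as the paper: both use Proposition~\ref{prop:3}(1) to reduce to the two generators $\sigma_e$ and $\sigma_{\sigma_e(e)}$, observe that their images under $\Phi_1$ and $\Phi_2$ are forced to agree, and then propagate this to all of~$X$ via transitivity. The only cosmetic difference is that the paper carries out the computation by hand on an explicit word $\sigma_e^{\varepsilon_1}\sigma_{\sigma_e(e)}^{\varepsilon_2}\cdots\sigma_e^{\varepsilon_k}(e)$, whereas you package the same step through the induced group homomorphisms $\Phi_{i,\mathcal{G}}$; the underlying argument is identical.
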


\begin{proof}
 According to Proposition~\ref{prop:3}, the permutation group is generated by~$\sigma_e$ and $\sigma_{\sigma_e(e)}$.
 Let now~$d\in X$. Since~$(X,\sigma,\tau)$ is indecomposable, there exist numbers~$k$ and $\varepsilon_1,\ldots,\varepsilon_k$ such that
 \[d= \sigma_e^{\varepsilon_1}\sigma_{\sigma_{e}(e)}^{\varepsilon_2}
 \sigma_{e}^{\varepsilon_3}
 \sigma_{\sigma_{e}(e)}^{\varepsilon_4}
 \cdots
 \sigma_{e}^{\varepsilon_k}(e).\]
 Now
 \begin{multline*}
 \Phi_1(d)=\Phi_1(\sigma_e^{\varepsilon_1}\sigma_{\sigma_{e}(e)}^{\varepsilon_2}
 \sigma_{e}^{\varepsilon_3}
 \sigma_{\sigma_{e}(e)}^{\varepsilon_4}
 \cdots
 \sigma_{e}^{\varepsilon_k}(e))
 =
 \rho_{\Phi_1(e)}^{\varepsilon_1}\rho_{\rho_{{\Phi_1(e)}}({\Phi_1(e)})}^{\varepsilon_2}
 \cdots
 \rho_{{\Phi_1(e)}}^{\varepsilon_k}({\Phi_1(e)})=\\
 \rho_{\Phi_2(e)}^{\varepsilon_1}\rho_{\rho_{{\Phi_2(e)}}({\Phi_2(e)})}^{\varepsilon_2}
 \cdots
 \rho_{{\Phi_2(e)}}^{\varepsilon_k}({\Phi_2(e)})
=
 \Phi_2(\sigma_e^{\varepsilon_1}\sigma_{\sigma_{e}(e)}^{\varepsilon_2}
 \sigma_{e}^{\varepsilon_3}
 \sigma_{\sigma_{e}(e)}^{\varepsilon_4}
 \cdots
 \sigma_{e}^{\varepsilon_k}(e))=\Phi_2(d). \qedhere
 \end{multline*}
\end{proof}

 \section{A construction of  indecomposable  solutions of multipermutation level 2 with non-abelian permutation group}\label{sec:constr}
  
  In \cite[Example 2.6]{JPZ21} we, together with Zamojska-Dzienio, provided an example of an indecomposable solution of multipermutation level $2$ of even order  with non-abelian permutation group. In this section we extend the idea applied there and we construct a wider class of solutions of multipermutation level $2$ with non-abelian permutation group.

Let us denote the group $(\Z,+,0)$ by $\Z_{\infty}$ which will be helpful for our further considerations. Additionally, in the case $n=\infty$ we assume that $a\equiv b\pmod n$ if and only if $a=b$.
\begin{theorem}\label{th:main}
Let $n\in \mathbb{N}\cup \{\infty\}$ and 
$(G,+,0)$ be an abelian group.
Further, let 
${\bf c}=(c_i)_{i\in\Z_n}\in G^n$ be a sequence 
such that $c_{0}=0$ and 
the group $(G,+,0)$ is generated by the set ~$\{c_{i}\mid i\in\Z_n\}$.

\noindent
Then $(G\times \Z_n,\sigma,\tau)$ with
 \[ \sigma_{(a,i)}((b,j))=(b+c_{i-j-1}-c_{-j-1},j+1)\]
  and
 \[\tau_{(a,i)}((b,j))=(b-c_{i-j+1}+c_{-j},j-1) \] 
is 
an indecomposable solution of multipermutation level $2$. 
Moreover, for any~$(e,k)\in G\times \Z_n$,
$$\mathcal{G}(G\times \Z_n)=\dis{G\times \Z_n}\rtimes \langle\sigma_{(e,k)}\rangle.$$

\end{theorem}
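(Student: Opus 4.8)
The plan is to verify directly that the given $\sigma$ and $\tau$ define a non-degenerate involutive solution, then check the multipermutation level 2 condition via Theorem~\ref{GI:mper}, then establish indecomposability, and finally analyze the permutation group.

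\textbf{Step 1: It is a solution.} First I would observe that $\sigma_{(a,i)}$ does not depend on $a$ (only on $i$), so write $\sigma_i := \sigma_{(a,i)}$ and similarly $\tau_j := \tau_{(a,j)}$. Each $\sigma_i$ is a bijection of $G\times\Z_n$ with inverse $(b,j)\mapsto(b-c_{i-j}+c_{-j},j-1)$, and similarly each $\tau_j$ is a bijection; hence non-degeneracy holds once we know $r=(\sigma,\tau)$ is a bijection, which follows from involutivity. To check $r^2=\mathrm{id}$ and the braid relation, the cleanest route is to exhibit the underlying 2-reductive solution and apply Theorem~\ref{th:2per}. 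Indeed, set $L_{(a,i)}((b,j)) := (b+c_{i-j-1}-c_{-j-1},j)$ — i.e.\ $\sigma_{(a,i)} = L_{(a,i)}\circ\pi$ where $\pi((b,j))=(b,j+1)$ — and one checks that $(G\times\Z_n,L,\mathbf R)$ with $\mathbf R_{(a,i)}((b,j)):=(b-c_{j-i-1}+c_{-i-1},j)$... wait, the indices must be matched carefully against \eqref{eq:7.8}; the point is that this is the 2-reductive solution which is a disjoint union over $\Z_n$ of copies of $G$, with $A_j = G\times\{j\}$, and structure constants $c_{i,j}$ depending only on $i-j$. Then I must verify the compatibility condition \eqref{pi} for $\pi$, namely $L_{\pi(y)}\pi L_x = L_{\pi(x)}\pi L_y$; since $L$ depends only on the $\Z_n$-coordinate and $\pi$ shifts it, this reduces to a telescoping identity among the $c_i$'s that holds identically (both sides shift the $G$-coordinate by $c_{i-j-2}-c_{-j-2}+c_{i-j-1}-c_{-j-1}$ type expressions, symmetric in the roles of the two arguments). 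Also $L_{(0,0)}=\mathrm{id}$, so Theorem~\ref{th:2per} applies and gives that $(G\times\Z_n,\sigma,\tau)$ is a solution of multipermutation level $2$.

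\textbf{Step 2: Multipermutation level is exactly $2$ and indecomposability.} For level $2$ it suffices, by Theorem~\ref{GI:mper}, to check $\sigma_{\sigma_y(x)}=\sigma_{\sigma_z(x)}$ for all $x,y,z$; since $\sigma_{(a,i)}$ depends only on $i$ and $\sigma_y(x)$ has $\Z_n$-coordinate equal to (that of $x$) $+1$ regardless of $y$, this is immediate. (That the level is not $1$ follows because the $c_i$ are not all equal unless $G$ is trivial — or one simply notes the construction is stated as ``multipermutation level $2$'' meaning ``at most $2$''.) For indecomposability I would show $\mathcal{G}(G\times\Z_n)$ acts transitively: starting from $(0,0)$, applying $\sigma_{(0,0)}$ repeatedly moves through all $\Z_n$-coordinates, and the displacement elements $\sigma_{(0,i)}\sigma_{(0,0)}^{-1}$ act on the $G$-coordinate as translation by $c_i - c_0 = c_i$ (computed from the formulas, fixing $j$), and since $\{c_i\mid i\in\Z_n\}$ generates $(G,+,0)$, we reach every point $(b,j)$.

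\textbf{Step 3: The semidirect product decomposition.} Here I would use Proposition~\ref{prop:normal}: $\dis{G\times\Z_n}$ is a normal abelian subgroup and $\mathcal{G}(G\times\Z_n) = \langle \dis{G\times\Z_n}\cup\{\sigma_{(e,k)}\}\rangle$ for any $(e,k)$. From Step 2, $\dis{G\times\Z_n}$ is precisely the group of translations $(b,j)\mapsto(b+g,j)$ for $g$ in the subgroup generated by the $c_i$, i.e.\ all of $G$ — so $\dis{G\times\Z_n}\cong(G,+)$. It then remains to see that $\langle\sigma_{(e,k)}\rangle\cap\dis{G\times\Z_n}=\{\mathrm{id}\}$: any nonzero power $\sigma_{(e,k)}^m$ has $\Z_n$-coordinate-action equal to shift by $m\bmod n$, which is the identity shift only if $n\mid m$, and when $n\mid m$ one computes $\sigma_{(e,k)}^m = \mathrm{id}$ outright (the $G$-coordinate contribution telescopes to $\sum$ over a full period of differences $c_\bullet - c_\bullet$ which cancels). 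Hence the intersection is trivial and, $\dis{G\times\Z_n}$ being normal, $\mathcal{G}(G\times\Z_n)=\dis{G\times\Z_n}\rtimes\langle\sigma_{(e,k)}\rangle$.

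\textbf{Main obstacle.} The routine but genuinely fiddly part is Step 1: getting the index conventions for the associated $2$-reductive solution right so that condition \eqref{pi} is literally an identity, and confirming that $\sigma_x=L_x\pi$ and $\tau_y=\pi^{-1}\mathbf R_{\pi(y)}$ reproduce exactly the stated formulas for $\sigma$ and $\tau$. Everything after that (levels, transitivity, semidirect product) is short once the bookkeeping is fixed. I would also double-check that the period-$n$ sums like $\sum_{r} (c_{i-r}-c_{-r})$ vanish, which is what forces $\sigma_{(e,k)}$ to have order exactly $n$ (for $n$ finite) and underpins the trivial-intersection claim.
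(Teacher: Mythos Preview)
Your approach is essentially the paper's: build the $2$-reductive solution with constants $c_{i,j}=c_{i-j}-c_{-j}$, verify condition~\eqref{pi} for $\pi((a,i))=(a,i+1)$, apply Theorem~\ref{th:2per}, check transitivity via $\sigma_{(0,i)}\sigma_{(0,0)}^{-1}$ acting as translation by $c_i$ on $G\times\{0\}$, and finish with Proposition~\ref{prop:normal} once the intersection $\dis{X}\cap\langle\sigma_{(e,k)}\rangle$ is seen to be trivial. The index bookkeeping in Step~1 is slightly off (your $L_{(a,i)}((b,j))$ should carry $c_{i-j}-c_{-j}$, not $c_{i-j-1}-c_{-j-1}$, so that $L_{(a,i)}\pi$ matches the stated $\sigma$), but you flag this yourself.

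There is one genuine misstatement in Step~3: it is \emph{not} true in general that $\dis{G\times\Z_n}\cong(G,+)$. The generator $L_{(0,i)}$ sends $(b,j)$ to $(b+c_{i-j}-c_{-j},j)$, so the shift on the $G$-coordinate depends on $j$; these are not translations by a single fixed element of $G$, and the displacement group can be strictly larger than $G$ (the paper's Example~\ref{ex:4.3} gives $|G|=8$ but $|\dis{X}|=32$; Proposition~\ref{prop:dis00} only gives $G\cong\dis{X}/\dis{X}_{(0,0)}$). Fortunately your trivial-intersection argument does not use this claim---it only uses that every element of $\dis{X}$ fixes the $\Z_n$-coordinate while $\sigma_{(e,k)}^m$ shifts it by $m$, together with $\sigma_{(e,k)}^n=\mathrm{id}$---so the proof of the theorem itself goes through once you drop the erroneous isomorphism.
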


\begin{proof}

Let us first define a matrix of constants $c_{i,j}=c_{i-j}-c_{-j}$, for $i,j\in\Z_n$. 
Note that, for each  $j\in \Z_n$,  $c_{j,0}=c_j$, $c_{0,j}=0$  and $(G,+,0)= \langle c_{i,j}\mid i\in \Z_n\rangle$. With this
matrix we define,
by Theorem \ref{thm:2red}, a $2$-reductive solution
~$(G\times \Z_n,L,\mathbf{R})$ with
 \[L_{(a,i)}((b,j))=(b+c_{i,j},j)\quad {\rm and}\quad 
 \mathbf{R}_{(a,i)}((b,j))=(b-c_{i,j},j),
 \]
 for $(a,i), (b,j)\in  G\times \Z_n$. Note that $L_{(a,i)}=L_{(0,i)}$, for every $a\in G$ and $i\in \Z_n$,
 and that $L_{(0,0)}=\mathrm{id}$.
%
 
 Now, let $\pi:G\times \Z_n \to G\times \Z_n$ be $\pi((a,i))=(a,i+1)$. Clearly, this is a permutation on the set $G\times \Z_n$ with
 the  inverse mapping  $\pi^{-1}:G\times \Z_n \to G\times \Z_n$ given by $\pi^{-1}((a,i))=(a,i-1)$.


To complete the construction of a solution of multipermutation level $2$ we need to check  Condition \eqref{pi}. 
%
\noindent
 Let us compute, for $(a,i), (b,j), (e,k)\in  G\times \Z_n$,
 \begin{align*}
  L_{\pi((a,i))}\pi L_{(b,j)} ((e,k))&=
  L_{(a,i+1)} \pi
  ((e+c_{j,k},k))
  =
  L_{(a,i+1)}
  ((e+c_{j,k},k+1))
  \\&=
  (e+c_{j,k}+c_{i+1,k+1},k+1)\\
  &=(e+c_{j-k}-c_{-k}+c_{i-k}-c_{-k-1},k+1),
  \\
  L_{\pi((b,j))}\pi L_{(a,i)} ((e,k))&=
  L_{(b,j+1)} \pi
  ((e+c_{i,k},k))
  =
  L_{(b,j+1)}
  ((e+c_{i,k},k+1))
  \\&=
  (e+c_{i,k}+c_{j+1,k+1},k+1)\\
  &=(e+c_{i-k}-c_{-k}+c_{j-k}-c_{-k-1},k+1).
 \end{align*}

Then, by Theorem \ref{th:2per},
the solution $(G\times \Z_n,\sigma,\tau)$ with 
$$\sigma_{(a,i)}((b,j))=L_{(a,i)}\pi((b,j))=L_{(a,i)}((b,j+1))=(b+c_{i,j+1},j+1),$$ 
for $(a,i), (b,j)\in 
G\times \Z_n$,
is a 2-permutational solution. 

Recall, the solution is indecomposable, if the permutation group
$\mathcal{G}(X)$ is transitive. 
Let us have $a\in G$ arbitrary. We first  prove that
$(a +c_i,0)$ is in the same orbit as $(a,0)$, for any $i\in \Z_n$:
\[
\sigma_{(0,i)}\sigma_{(0,0)}^{-1}((a,0))
\stackrel{\eqref{dis}}{=}
L_{(0,i)}((a,0))=
(a+c_{i},0).
\]
By an induction argument then $(a,0)$ lies in the same orbit as $(0,0)$, for any $a\in G$. 
Now, for any $b\in G$ and $j\in\Z_n$,
\[ \sigma_{(0,0)}^j ((b,0))=\pi^j((b,0))=(b,j)\]
and therefore the permutation group $\mathcal{G}(X)$  is transitive on~$G\times \Z_n$.

Let $(a,i),(b,j)\in G\times \Z_n$. By the construction of permutations $L_{(a,i)}$ and $\sigma_{(b,j)}$, the intersection $\dis{G\times \Z_n}\cap\langle{\sigma_{(b,j)}}\rangle$ is trivial. Hence,  Proposition \ref{prop:normal} finishes the proof.
 \end{proof}

We will denote the solution described above by $\mathcal{S}(G\times \Z_n,{\bf c})$, for $n\in \mathbb{N}\cup\{\infty\}$, a group $(G,+,0)$ and ${\bf c}\in G^n$ 
specified in Theorem \ref{th:main}. 

Let us note that $\mathcal{S}(\Z_1\times \Z_n,{\bf 0})$ is a permutation solution. On the other hand,  $n=1$ forces $|G|=1$ and $\mathcal{S}(\Z_1\times \Z_1,0)$ is a trivial one element solution. Hence, from now on, we can assume $n>1$.

\begin{proposition}\label{prop:dis00}
Let $(X,\sigma,\tau)=\mathcal{S}(G\times \Z_n,{\bf c})$ and let $\Psi\colon \dis{X}\to G$  be defined by $\prod L_{(0,i)}^{p_i}\mapsto \sum p_i\cdot c_i$. Then the mapping
$\Psi$ is a well-defined epimorphism and
$\Ker\Psi=\dis{X}_{(0,0)}$.
In particular, $(G,+,0)\cong\dis{X}/\dis{X}_{(0,0)}$.
\end{proposition}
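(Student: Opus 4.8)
The plan is to verify three things in turn: that $\Psi$ is well-defined, that it is a surjective group homomorphism, and that its kernel is exactly the stabilizer $\dis{X}_{(0,0)}$. First I would note that $\dis{X}=\langle L_{(0,i)}\mid i\in\Z_n\rangle$ by Proposition~\ref{prop:normal} (setting $e=\widetilde 0=(0,0)$), and since each $L_{(0,i)}$ acts on $G\times\Z_n$ by $(b,j)\mapsto(b+c_{i,j},j)$ and these commute (the group $\dis{X}$ is abelian), a general element $\alpha\in\dis{X}$ acts as $(b,j)\mapsto\bigl(b+\sum p_i\, c_{i,j},\,j\bigr)$ for integers $p_i$, almost all zero. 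Recalling $c_{i,j}=c_{i-j}-c_{-j}$, the ``$0$-coordinate'' of this action is the shift by $\sum p_i\, c_{i,0}=\sum p_i\, c_i$. The point is that the integers $p_i$ are not uniquely determined by $\alpha$ (there may be relations), so well-definedness of $\Psi$ requires an argument; the clean way is to observe that the value $\sum p_i\, c_i\in G$ is precisely the first coordinate of $\alpha(0,0)$, which depends only on $\alpha$, not on the chosen expression $\prod L_{(0,i)}^{p_i}$. Thus I would simply \emph{define} (or characterize) $\Psi$ by $\Psi(\alpha)=\mathrm{pr}_G(\alpha(0,0))$ and check this agrees with the stated formula on generators; well-definedness is then automatic.

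Next, $\Psi$ is a homomorphism: for $\alpha,\beta\in\dis{X}$ with actions $(b,j)\mapsto(b+s_\alpha,j)$ and $(b,j)\mapsto(b+s_\beta,j)$ on the slice $j=0$ (here $s_\alpha=\Psi(\alpha)$ etc.), the composite $\alpha\beta$ sends $(0,0)$ to $(s_\alpha+s_\beta,0)$ — because every element of $\dis{X}$ fixes the $\Z_n$-coordinate, so composition just adds the $G$-shifts. Hence $\Psi(\alpha\beta)=\Psi(\alpha)+\Psi(\beta)$. Surjectivity is immediate from the construction in Theorem~\ref{th:main}: in the proof there it is shown $L_{(0,i)}((a,0))=(a+c_i,0)$, so $\Psi(L_{(0,i)})=c_i$, and since $(G,+,0)=\langle c_i\mid i\in\Z_n\rangle$ the image of $\Psi$ is all of $G$.

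Finally, the kernel. By the above description, $\alpha\in\Ker\Psi$ iff the $G$-shift on the slice $j=0$ is zero, i.e. iff $\alpha(0,0)=(0,0)$, i.e. iff $\alpha\in\dis{X}_{(0,0)}$. So $\Ker\Psi=\dis{X}_{(0,0)}$. The isomorphism $(G,+,0)\cong\dis{X}/\dis{X}_{(0,0)}$ then follows from the first isomorphism theorem. I expect the only genuine subtlety to be the well-definedness point — making sure the formula $\prod L_{(0,i)}^{p_i}\mapsto\sum p_i\,c_i$ does not depend on the (non-unique) exponent sequence $(p_i)$ — and the remedy is exactly the reinterpretation of $\Psi$ as ``read off the first coordinate of the image of $(0,0)$,'' after which every other step is a one-line computation; one should also remark that the exponent sequences appearing are finitely supported so the sum $\sum p_i\,c_i$ makes sense even when $n=\infty$.
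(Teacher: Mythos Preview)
Your proposal is correct and follows essentially the same approach as the paper: both identify $\Psi$ with the action of $\dis{X}$ on the slice $G\times\{0\}$ (equivalently, $\Psi(\alpha)=\mathrm{pr}_G(\alpha(0,0))$), which simultaneously handles well-definedness, the homomorphism property, and the identification of the kernel with the stabilizer. You are simply more explicit about the well-definedness point than the paper, which dispatches it in the phrase ``can be viewed as the action of~$\dis{X}$ on $G\times\{0\}$''.
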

\begin{proof}
Recall, the abelian group  $\dis{X}$ is generated by the set $\{L_{(0,i)} \mid i\in\Z_n\}$ and the group $(G,+,0)$ is generated by the set ~$\{c_{i}\mid i\in\Z_n\}$. Since $L_{(0,i)}((a,0))=(a+c_i,0)$, for all $a\in G$ and $i\in\N$, the mapping $\Psi$ can be viewed as
the action of~$\dis{X}$ on $G\times\{0\}$ and therefore it is a homomorphism from $\dis{X}$ onto $G$.
Clearly $\Ker\Psi=\dis{X}_{(0,0)}$ and hence $(G,+,0)\cong\dis{X}/\dis{X}_{(0,0)}$.
\end{proof}






The following example presents in details the construction described in Theorem \ref{th:main}. 
\begin{example}\label{ex:4.3}
 Let $n=4$, $(G,+,0)=\Z_4\times \Z_2$.
 and let choose ${\bf c}=(c_0,c_1,c_2,c_3)\in (\Z_4\times \Z_2)^4$ as: $c_0=(0,0)$, $c_1=(1,0)$, $c_2=(1,0)$ and $c_3=(2,1)$. 
 Clearly, $\Z_4\times \Z_2=\langle c_i\colon i\in \Z_4\rangle$ and 
the $16$ constants $c_{i,j}$ are: 
$$\begin{array}{c|c|c|c|c}
c_{i,j}&0&1&2&3\\
\hline
1&(1,0)&(2,1)&(1,1)&(0,0)\\
2&(1,0)&(3,1)&(3,0)&(1,1)\\
3&(2,1)&(3,1)&(0,0)&(3,0)
\end{array}$$
Then $(\Z_4\times \Z_2\times\Z_4,L,\mathbf{R})$ with 
$$L_{(a_1,a_2,i)}((b_1,b_2,j))=((b_1,b_2)+c_{i,j},j)$$
is a $2$-reductive solution with an abelian permutation group generated by $\{L_{(0,0,i)} \mid i\in\{1,2,3\}\}$. 

It is easy to see that the group is isomorphic to the subgroup of the group~$G^4$ which
is generated by the vectors $\vec{c}_i=(c_{i,0},c_{i,1},c_{i,2},c_{i,3})$, for $i\in\{1,2,3\}$.
Moreover, we see, for each $i\in\{1,2,3\}$, that $c_{i,i}$ is not a linear combination
of the other two coefficients in the same column. Hence the permutation group has no less than three generators.
Furthermore, we have $2\vec{c}_1+2\vec{c}_2+2\vec{c}_3=\textbf{0}$ and therefore the group is not a free $\Z_4$-module.
A GAP (\cite{gap}) calculation showed us that the order of the group is~$2^5$.

Let us continue: the permutation $\pi\colon \Z_4\times \Z_2\times\Z_4\to \Z_4\times \Z_2\times\Z_4$ is given by $$\pi((a_1,a_2,i))= (a_1,a_2,i+ 1).$$

Finally, $(\Z_4\times \Z_2\times\Z_4,\sigma,\tau)$ with 
$$\sigma_{(a_1,a_2,i)}((b_1,b_2,j))=L_{(a_1,a_2,i)}\pi((b_1,b_2,j))=((b_1,b_2)+c_{i,j+1},j+1)$$
is an indecomposable solution $\mathcal{S}(\Z_4\times\Z_2\times \Z_4,{\bf c})$ of multipermutation level $2$ of size $2^5$.


Once again by GAP calculation, the group $\mathcal{G}(\Z_4\times \Z_2\times\Z_4)$ is of order~$2^7$ and is isomorphic to the group $(\Z_4^2\times \Z_2)\rtimes \Z_4$.
Since the order of~$\mathcal{G}(\Z_4\times \Z_2\times\Z_4)$ is greater than the size of~$\Z_4\times \Z_2\times\Z_4$, the solution
$\mathcal{S}(\Z_4\times\Z_2\times \Z_4,{\bf c})$ 
is not uniconnected.
\end{example}

In the sequel, 
let us have $(X,\sigma,\tau)=\mathcal{S}(G\times \Z_n,{\bf c})$, for some $n\in \mathbb{N}\cup\{\infty\}$, a group $(G,+,0)$ and ${\bf c}\in G^n$.



\begin{proposition}\label{prop:d}
The permutation group $\mathcal{G}(X)$ is abelian if and only if $c_{i\bmod n}=i\cdot c_1$, for all $i\in\Z$.
\end{proposition}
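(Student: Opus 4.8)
The plan is to translate "$\mathcal{G}(X)$ is abelian" into a commutation condition on the generators $\sigma_{(0,0)}$ and $\sigma_{(0,1)}$ (which generate $\mathcal{G}(X)$ by Proposition~\ref{prop:3}), and then unwind what that means for the sequence $\mathbf{c}$. Recall that by construction $\sigma_{(a,i)} = L_{(a,i)}\pi$ with $\pi((b,j)) = (b,j+1)$ and $L_{(0,i)}((b,j)) = (b + c_{i,j},j)$, where $c_{i,j} = c_{i-j} - c_{-j}$. Since $\dis{X}$ is abelian (Proposition~\ref{prop:normal}) and $\mathcal{G}(X) = \langle \dis{X} \cup \{\sigma_{(0,0)}\}\rangle$, the group $\mathcal{G}(X)$ is abelian if and only if $\sigma_{(0,0)}$ commutes with every element of $\dis{X}$, equivalently with each generator $L_{(0,i)}$ of $\dis{X}$; and since $\pi = \sigma_{(0,0)}$, this is exactly the statement that $\pi L_{(0,i)} = L_{(0,i)}\pi$ for all $i\in\Z_n$.

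First I would compute both sides of $\pi L_{(0,i)} = L_{(0,i)}\pi$ applied to an arbitrary $(b,j)$. On the left: $\pi L_{(0,i)}((b,j)) = \pi((b + c_{i,j},j)) = (b + c_{i,j}, j+1)$. On the right: $L_{(0,i)}\pi((b,j)) = L_{(0,i)}((b,j+1)) = (b + c_{i,j+1}, j+1)$. So commutation holds if and only if $c_{i,j} = c_{i,j+1}$ for all $i,j\in\Z_n$, i.e. the constant $c_{i,j}$ does not depend on $j$. Substituting the definition $c_{i,j} = c_{i-j} - c_{-j}$, this says $c_{i-j} - c_{-j} = c_{i-j-1} - c_{-j-1}$ for all $i,j$; reindexing (put $k = i-j$ and let $j$ range), this is equivalent to the single recurrence $c_k - c_{-j} = c_{k-1} - c_{-j-1}$ holding for all $k, j\in\Z_n$, which splits into: $c_k - c_{k-1}$ is independent of $k$, call it $d$; and then fixing $k$ forces $c_{-j} - c_{-j-1} = d$ as well (same quantity). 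So the condition collapses to: $c_k - c_{k-1} = d$ for all $k\in\Z_n$, a fixed $d\in G$.

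Now I use the normalization $c_0 = 0$. From $c_k - c_{k-1} = d$ and $c_0 = 0$ we get $c_k = k\cdot d$ for all representatives $k$ (in the case $n = \infty$ this is literally $c_k = k d$; in the case $n$ finite it is $c_{k\bmod n} = k\cdot d$, and consistency around the cycle forces $n\cdot d = 0$, which is automatic since $c_n = c_0 = 0$). In particular $c_1 = d$, so $c_{i\bmod n} = i\cdot c_1$ for all $i\in\Z$, which is the asserted condition. Conversely, if $c_{i\bmod n} = i\cdot c_1$ for all $i$, then $c_{i,j} = c_{i-j} - c_{-j} = (i-j)c_1 - (-j)c_1 = i\cdot c_1$ is independent of $j$, so by the computation above $\pi$ commutes with every $L_{(0,i)}$, hence with all of $\dis{X}$, hence $\mathcal{G}(X) = \langle \dis{X}, \pi\rangle$ is abelian.

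The only mildly delicate point is the bookkeeping for $n = \infty$ versus $n$ finite in the passage from the recurrence $c_k - c_{k-1} = d$ to $c_k = k c_1$: for finite $n$ one must check that the formula $c_{i\bmod n} = i\cdot c_1$ is well-defined, i.e. depends only on $i\bmod n$, which is guaranteed by $c_n = c_0 = 0 = n c_1$ forcing $n c_1 = 0$. Everything else is a direct unwinding of the definitions of $L$, $\pi$, and $c_{i,j}$, together with the facts from Propositions~\ref{prop:normal} and~\ref{prop:3} already available to us; no further obstacle is expected.
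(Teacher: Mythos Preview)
Your proof is correct and follows essentially the same approach as the paper: both reduce the abelianness of $\mathcal{G}(X)=\langle\dis{X}\cup\{\pi\}\rangle$ to the commutation of $\pi$ with each generator $L_{(0,i)}$ of the abelian group $\dis{X}$, and then unwind this in terms of the constants $c_{i,j}=c_{i-j}-c_{-j}$. The only cosmetic difference is that the paper invokes the conjugation identity $\pi L_{(0,i)}\pi^{-1}=L_{(0,1)}^{-1}L_{(0,i+1)}$ to arrive at the condition $c_{i+1,j}=c_{i,j}+c_{1,j}$, whereas you compute $\pi L_{(0,i)}$ and $L_{(0,i)}\pi$ directly to obtain the equivalent condition that $c_{i,j}$ is independent of~$j$; both collapse to the recurrence $c_{k}-c_{k-1}=c_1$ and hence $c_{i\bmod n}=i\cdot c_1$.
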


\begin{proof}
Taking in \eqref{piLx}, $\widetilde 0=(0,0)$, we directly obtain
\begin{align}\label{eq:Lpi}
\pi L_{(a,i)}\pi^{-1}= L_{\pi((0,0))}^{-1}L_{\pi((a,i))}=L_{(0,1)}^{-1}L_{(a,i+1)}
\end{align}
for $a\in G$ and $i\in\Z_n$.

Recall that the abelian group $\dis{X}$ is generated by the set $\{L_{(0,i)} \mid i\in\Z_n\}$. Then by \eqref{eq:Lpi}, the group  $\mathcal{G}(X)$ is abelian if and only if for every $(b,j)\in G\times \Z_n$ and all~$i\in\Z_n$
$$L_{(0,i)}((b,j))= L_{(0,i+1)}L_{(0,1)}^{-1}((b,j)).$$ 
This is equivalent
to, for all $i,j\in\Z_n$, 
\begin{align*}
c_{i+1,j}=c_{i,j}+c_{1,j}.
\end{align*}
This condition 
is equivalent to
$c_{i+1-j}=c_{i-j}+c_{1-j}-c_{-j}$,
 for all $i,j\in\Z_n$.
In particular, for $j=0$, we obtain $c_{i+1}=c_i+c_1$, which, by an easy induction, implies $c_{i\mod n}=i\cdot c_1$, for $i\in \Z$.
On the other hand, $c_{i\mod n}=i\cdot c_1$ implies $c_{i+1-j}+c_{-j}=c_{i-j}+c_{1-j}$ for $i,j\in \Z_n$.
\end{proof}
Note that by Proposition \ref{prop:d}, for a solution $\mathcal{S}(G\times \Z_n,{\bf c})$ with abelian permutation group, the group $(G,+,0)$ is cyclic. In a finite case, such solution with ${\bf c}=(c_i)_{i\in \Z_n}=(i)_{i\in \Z_n}$
overlaps with the solution $\mathcal{C}(m,n,0)$ described in \cite[Theorem 3.1]{JPZ21}.

\begin{proposition}\label{prop:lengthcycles}
For $x\in X$, the permutation $\sigma_x\in\mathcal{G}(X)$ is composed of cycles of length~$n$.
\end{proposition}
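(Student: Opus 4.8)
The plan is to show directly that every $\sigma_x$ permutes $X = G\times\Z_n$ with all cycles of the same length $n$, by understanding the action of $\sigma_x$ on the second coordinate and then on the first. First I would recall from the proof of Theorem~\ref{th:main} that $\sigma_{(a,i)}((b,j)) = (b + c_{i,j+1}, j+1)$, so that $\sigma_x$ acts on the $\Z_n$-coordinate simply by $j\mapsto j+1$. Hence $\sigma_x^n$ fixes the second coordinate of every point, and no smaller positive power does; this already forces every cycle length of $\sigma_x$ to be a multiple of $n$ (when $n=\infty$, it forces every orbit to be infinite, and the argument below shows $\sigma_x$ acts as a single bi-infinite shift on each fibre-orbit). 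So it remains to prove that $\sigma_x^n = \mathrm{id}$, i.e. that the first coordinate also returns to its starting value after $n$ steps.

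Next I would compute $\sigma_{(a,i)}^n$ explicitly. Iterating $\sigma_{(a,i)}((b,j)) = (b + c_{i,j+1}, j+1)$, after $n$ applications starting from $(b,j)$ the second coordinate returns to $j$ and the first coordinate becomes $b + \sum_{r=1}^{n} c_{i,\,j+r}$. Since $r$ ranges over a complete residue system modulo $n$, this sum equals $\sum_{\ell\in\Z_n} c_{i,\ell}$, which is independent of $b$ and $j$. Using the definition $c_{i,\ell} = c_{i-\ell} - c_{-\ell}$ from the proof of Theorem~\ref{th:main}, we get
\[
\sum_{\ell\in\Z_n} c_{i,\ell} = \sum_{\ell\in\Z_n} c_{i-\ell} - \sum_{\ell\in\Z_n} c_{-\ell} = \sum_{m\in\Z_n} c_m - \sum_{m\in\Z_n} c_m = 0,
\]
because both sums run over all of $\Z_n$ after reindexing. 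Therefore $\sigma_{(a,i)}^n((b,j)) = (b,j)$ for all $(b,j)$, so $\sigma_{(a,i)}^n = \mathrm{id}$, and combined with the previous paragraph every cycle of $\sigma_{(a,i)}$ has length exactly $n$.

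For the case $n=\infty$ one has to phrase this slightly differently, since there is no sum over a full residue system: here the orbit of $(b,j)$ under $\sigma_{(a,i)}$ is $\{(b + c_{i,j+1} + \dots + c_{i,j+r},\, j+r) : r\in\Z\}$, and since distinct values of $r$ give distinct second coordinates, the orbit is a bi-infinite cycle, which is what ``cycle of length $n=\infty$'' means. The only point needing mild care is bookkeeping with the index shift in the iteration formula and the reindexing of the sum; there is no real obstacle, the statement is essentially a one-line telescoping once the formula $\sigma_{(a,i)}((b,j)) = (b+c_{i,j+1},j+1)$ is in hand. I would present the finite case in full and remark that the infinite case is the transparent analogue.
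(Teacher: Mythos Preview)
Your proof is correct and follows essentially the same approach as the paper: both compute the iterate $\sigma_{(a,i)}^k((b,j))$ explicitly, observe that the second coordinate shifts by~$k$, and then verify that for $k=n$ the accumulated first-coordinate increment $\sum_{\ell}(c_{i-\ell}-c_{-\ell})$ vanishes because both sums range over all of~$\Z_n$. The only difference is cosmetic (you phrase the increment via $c_{i,j+r}$ before unpacking, and you add an explicit remark on the case $n=\infty$, which the paper's proof leaves implicit).
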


\begin{proof}
Let $(a,i),(b,j)\in G\times \Z_n$. By a straightforward induction we get for any $k\in \mathbb{N}$
\begin{align}\label{sigmal}
\sigma_{(a,i)}^k((b,j)) = ( b+\sum_{\ell=1}^{k} (c_{i-j-\ell}-c_{-j-\ell})\ ,\ j+k)
\end{align}
and therefore $\sigma_{(a,i)}^k((b,j))\neq (b,j)$ if~$k$ is not a multiple of~$n$. Now
\[
 \sum_{\ell=1}^{n} (c_{i-j-\ell}-c_{-j-\ell} )=\sum_{\ell\in\Z_n} c_\ell-\sum_{\ell\in\Z_n} c_\ell=0
\]
and therefore $\sigma_{(a,i)}^n((b,j)) = (b,j)$.
\end{proof}

\begin{proposition}\label{prop:trans}
 The automorphism group of the solution~$\mathcal{S}(G\times \Z_n,{\bf c})$ is regular.
\end{proposition}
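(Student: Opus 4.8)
The plan is to split the claim into two parts: first that $\Aut(\mathcal{S}(G\times\Z_n,{\bf c}))$ acts transitively on $G\times\Z_n$, and second that the stabilizer of a point is trivial. For transitivity, I would exhibit explicit automorphisms. The natural candidates are the maps $T_{(g,m)}\colon(b,j)\mapsto(b+g,\,j+m)$ for suitable pairs $(g,m)$; since $\pi=\sigma_{(0,0)}$ is itself the translation $(b,j)\mapsto(b,j+1)$ in the second coordinate, and since $\Phi_{(g,0)}\colon(b,j)\mapsto(b+g,j)$ commutes with every $\sigma_{(a,i)}$ by the explicit formula $\sigma_{(a,i)}((b,j))=(b+c_{i-j-1}-c_{-j-1},j+1)$ (the $G$-component is translated, the $\Z_n$-component shifted, and both are unaffected by adding $g$ to $b$), each $\Phi_{(g,0)}$ is an automorphism, and so is $\pi$ if $\pi\sigma_x\pi^{-1}=\sigma_{\pi(x)}$ — which is exactly Equation~\eqref{piDx}/\eqref{eq:Lpi} combined with $\sigma_{(a,i+1)}=L_{(0,1)}\sigma_{(a,i)}\cdots$; more directly one checks $\pi\sigma_{(a,i)}=\sigma_{(a,i+1)}\pi$ from the formula. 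Composing, $\Phi_{(g,0)}\circ\pi^m$ sends $(0,0)$ to $(g,m)$, which already gives transitivity.

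For the stabilizer, I would invoke Corollary~\ref{cor:semi-regular}: taking $(X,\sigma,\tau)=(Y,\rho,\upsilon)=\mathcal{S}(G\times\Z_n,{\bf c})$ and $\Phi_2=\id$, any automorphism $\Phi_1$ fixing a point $e$ must equal $\id$. Hence every point-stabilizer in $\Aut(X)$ is trivial, so the action is free; together with transitivity this is precisely regularity. This is the clean route and it makes the proof essentially a one-line consequence of the corollary plus the transitivity witnesses above.

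The one point that needs genuine care is verifying that the maps I propose really are solution homomorphisms, i.e.\ that $\Phi\sigma_x=\sigma_{\Phi(x)}\Phi$ for all $x$, using only the explicit formula from Theorem~\ref{th:main}; for $\Phi_{(g,0)}$ this is immediate since $\Phi_{(g,0)}(x)$ has the same $\Z_n$-component as $x$ and adding $g$ to the first coordinate commutes with the formula, and for $\pi$ it follows because $\sigma_{(a,i)}$ depends on $(a,i)$ only through $i$ and the formula's $i$-dependence is via $c_{i-j-1}$, which shifts consistently when $i\mapsto i+1$ and simultaneously $j\mapsto j+1$. I expect the main (mild) obstacle to be bookkeeping of indices in these two checks; there is no conceptual difficulty, since transitivity is concrete and freeness is handed to us by Corollary~\ref{cor:semi-regular}.
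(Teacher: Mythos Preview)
Your freeness argument via Corollary~\ref{cor:semi-regular} is correct, and the translations $\Phi_{(g,0)}$ are genuine automorphisms. The gap is that $\pi$ is \emph{not} an automorphism of the solution in general. Compute both sides of $\pi\sigma_{(a,i)}=\sigma_{(a,i+1)}\pi$ applied to $(b,j)$:
\[
\pi\sigma_{(a,i)}((b,j))=(b+c_{i-j-1}-c_{-j-1},\,j+2),\qquad
\sigma_{(a,i+1)}\pi((b,j))=(b+c_{i-j-1}-c_{-j-2},\,j+2).
\]
These agree only if $c_{-j-1}=c_{-j-2}$ for all~$j$, forcing all $c_i=c_0=0$ and hence $G$ trivial. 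Your heuristic overlooked the term $-c_{-j-1}$, which depends on~$j$ alone and does not shift consistently under $j\mapsto j+1$. Equivalently, in the paper's notation $\pi L_{(a,i)}\pi^{-1}=L_{(0,1)}^{-1}L_{(a,i+1)}$ (equation~\eqref{eq:Lpi}), so $\pi\sigma_{(a,i)}\pi^{-1}=L_{(0,1)}^{-1}\sigma_{(a,i+1)}\neq\sigma_{(a,i+1)}$ unless $L_{(0,1)}=\mathrm{id}$.

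The fix is precisely what the paper does: the automorphism moving $(0,0)$ to $(g,k)$ must twist the first coordinate by an $i$-dependent correction. Concretely the paper sets
\[
\Phi((a,i))=\sigma_{(g,k)}^{\,i}((a+g,k))=\bigl(a+g+\textstyle\sum_{\ell=1}^{i}(c_{-\ell}-c_{-k-\ell}),\,k+i\bigr),
\]
and the extra sum is exactly what is needed to absorb the discrepancy $c_{-j-1}-c_{-k-j-1}$ at each step. So your overall strategy (explicit transitivity witnesses plus Corollary~\ref{cor:semi-regular}) matches the paper's, but the shift in the second coordinate cannot be decoupled from a compensating shift in the first.
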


\begin{proof}
  Let $g\in G$ and $k\in\Z_n$.
 For each $i\in\Z_n$ and $a\in G$, let
 \[\Phi((a,i))=
 \sigma_{(g,k)}^i ((a+g,k))=(a+\sum_{\ell=1}^i(c_{-\ell}-c_{-k-\ell})+g,k+i).
 \]
 This mapping is clearly a bijection
 and we shall now prove that it is a homomorphism of the solution.
 \begin{multline*}
  \Phi(\sigma_{(a,i)}((b,j))) =
  \Phi ((b+c_{i-j-1}-c_{-j-1},j+1))=\\
  (b+c_{i-j-1}-c_{-j-1}+\sum_{\ell=1}^{j+1}(c_{-\ell}-c_{-k-\ell})+g,k+j+1)=\\  
  (b+\sum_{\ell=1}^{j}(c_{-\ell}-c_{-k-\ell})+c_{i-j-1}-c_{-j-1}+c_{-j-1}-c_{-k-j-1}+g,k+j+1)\\  
  =(b+\sum_{\ell=1}^j(c_{-\ell}-c_{-k-\ell})+c_{i-j-1}-c_{-k-j-1}+g,k+j+1)=\\
  \sigma_{(\textrm{anything},k+i)} (b+\sum_{\ell=1}^j(c_{-\ell}-c_{-k-\ell})+g,k+j)= 
  \sigma_{\Phi((a,i))}
  \Phi((b,j)).
 \end{multline*}
Clearly $\Phi((0,0))=(g,k)$ and hence $\aut{X}$ is transitive and,
according to Corollary~\ref{cor:semi-regular}, it is regular.
\end{proof}

It is a question, which choice of parameters in the construction
yields the same solution (up to isomorphism). We show that the obvious
choice is the only possibility.

\begin{theorem}\label{thm:isoS}
Two solutions  $(X,\sigma,\tau)=\mathcal{S}(G\times \Z_n,{\bf c})$ and  $(X',\sigma',\tau')=\mathcal{S}(G'\times \Z_{n'},{\bf c}')$ are isomorphic if and only if 
\begin{enumerate}
\item $n'=n$
\item there exists a group isomorphism $g:G\to G'$ such that, for each $i\in\Z_n$
$$c_i'=g(c_i).$$ 
\end{enumerate}
\end{theorem}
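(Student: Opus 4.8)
The implication ($\Leftarrow$) is the easy direction: given a group isomorphism $g\colon G\to G'$ with $c_i'=g(c_i)$ for all $i\in\Z_n$, I would check directly that the map $\Phi\colon G\times\Z_n\to G'\times\Z_{n'}$ defined by $\Phi((a,i))=(g(a),i)$ (using $n'=n$) is an isomorphism of solutions. Since $g$ is additive, we have $\Phi(\sigma_{(a,i)}((b,j)))=\Phi((b+c_{i-j-1}-c_{-j-1},j+1))=(g(b)+c'_{i-j-1}-c'_{-j-1},j+1)=\sigma'_{(g(a),i)}((g(b),j))=\sigma'_{\Phi((a,i))}\Phi((b,j))$, and $\Phi$ is a bijection because $g$ is; this settles ($\Leftarrow$).

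\textbf{The hard direction} ($\Rightarrow$). Suppose $\Phi\colon X\to X'$ is an isomorphism of solutions. First, $n'=n$: by Proposition~\ref{prop:lengthcycles} every $\sigma_x$ is a product of $n$-cycles while every $\sigma'_{x'}$ is a product of $n'$-cycles, and $\Phi$ conjugates $\sigma_x$ to $\sigma'_{\Phi(x)}$, so the cycle lengths agree, giving $n=n'$. Next I would use the transitivity of the automorphism groups: by Proposition~\ref{prop:trans}, $\aut{X'}$ acts transitively (indeed regularly) on $X'$, so after composing $\Phi$ with a suitable automorphism of $(X',\sigma',\tau')$ I may assume $\Phi((0,0))=(0,0)$. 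The plan is then to show that such a normalized $\Phi$ has the form $\Phi((a,i))=(g(a),i)$ for a group isomorphism $g\colon G\to G'$ satisfying $c_i'=g(c_i)$.

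\textbf{Extracting $g$ and the index identity.} Since $\Phi$ is an isomorphism, $\Phi_{\mathcal{G}}\colon\mathcal{G}(X)\to\mathcal{G}(X')$ is a group isomorphism restricting to an isomorphism $\Phi_D\colon\dis X\to\dis{X'}$. The second coordinate of $\Phi$ must behave well with respect to $\pi$: because $\sigma_{(0,0)}=\pi$ and $\sigma'_{(0,0)}=\pi'$ and $\Phi$ conjugates the former to $\sigma'_{\Phi((0,0))}=\sigma'_{(0,0)}=\pi'$, we get $\Phi\pi=\pi'\Phi$, so $\Phi((a,i))=(f(a,i),i+c)$ for some constant shift; the normalization $\Phi((0,0))=(0,0)$ forces the shift to be $0$, hence $\Phi((a,i))=(f(a,i),i)$ and moreover $\Phi((a,i))=\pi'^{\,i}\Phi((a,0))$ shows $f(a,i)=f(a,0)=:g(a)$ is independent of $i$. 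Now apply $\Phi$ to the relation $\sigma_{(0,i)}\sigma_{(0,0)}^{-1}((a,0))=(a+c_i,0)$ (this is Equation~\eqref{dis} applied inside $X$) and use that $\Phi_{\mathcal G}$ sends $\sigma_{(0,i)}\sigma_{(0,0)}^{-1}$ to $\sigma'_{(0,i)}\sigma'_{(0,0)}^{-1}$ (here I must check $\Phi((0,i))=(g(0),i)=(0,i)$, which follows once I know $g(0)=0$, i.e. that $g$ is additive — established next): this yields $g(a+c_i)=g(a)+c_i'$ for all $a,i$. Taking $a=0$ gives $c_i'=g(c_i)$ (using $g(0)=0$), and then $g(a+c_i)=g(a)+g(c_i)$; since $\{c_i\}$ generates $G$, an induction over words in the $c_i$ shows $g$ is a group homomorphism, and it is bijective because $\Phi$ is.

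\textbf{Main obstacle.} The delicate point is the bookkeeping in the previous paragraph: establishing simultaneously that the first coordinate $g$ is well-defined independently of $i$, that $g$ fixes $0$, and that $\Phi_{\mathcal G}$ really carries $\sigma_{(0,i)}\sigma_{(0,0)}^{-1}$ to the corresponding element in $X'$. The clean way to organize it is to first use $\Phi\pi=\pi'\Phi$ and the normalization to pin down the second coordinate and the $i$-independence of $g$, then deduce $g(0)=0$ from $\Phi((0,0))=(0,0)$, and only then run the additivity argument from $g(a+c_i)=g(a)+c_i'$ together with the generation hypothesis on $\{c_i\}$. Verifying that $\Phi$ as constructed is automatically compatible with $\tau,\tau'$ (rather than just with $\sigma,\sigma'$) is automatic for involutive solutions since $\tau$ is determined by $\sigma$, so no extra work is needed there.
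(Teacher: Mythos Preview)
Your ``$\Leftarrow$'' direction is fine and matches what the paper leaves as ``evident by straightforward calculations''.

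For ``$\Rightarrow$'', your overall strategy---normalize so that $\Phi((0,0))=(0,0)$, then show $\Phi((a,i))=(g(a),i)$ for a bijection~$g$ and deduce additivity from $g(a+c_i)=g(a)+c_i'$---is more concrete than the paper's. The paper instead quotes Proposition~\ref{prop:dis00} to identify $G\cong\dis X/\dis X_{(0,0)}$ and $G'\cong\dis{X'}/\dis{X'}_{(0,0)}$, and defines $g$ abstractly as $\Psi'\circ\Phi_D\circ\Psi^{-1}$, so that $g$ is automatically a group isomorphism; it then checks $g(c_i)=c_i'$ using only $\Phi((0,i))=(0,i)$. Your route avoids that identification, but there is a genuine gap.

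The problem is the sentence ``we get $\Phi\pi=\pi'\Phi$, so $\Phi((a,i))=(f(a,i),i+c)$ for some constant shift''. From $\Phi\pi=\pi'\Phi$ you only obtain that the second coordinate of $\Phi((a,i))$ has the form $h(a)+i$ for some function $h\colon G\to\Z_n$, and the normalization $\Phi((0,0))=(0,0)$ yields only $h(0)=0$. You have not shown that $h$ is constant, i.e.\ that $h(a)=0$ for all $a\in G$. Without this, the key computation $g(a+c_i)=g(a)+c_i'$ breaks: applying $\Phi_D(L_{(0,i)})=L'_{(0,i)}$ to $\Phi((a,0))=(g(a),h(a))$ produces first coordinate $g(a)+c'_{i,h(a)}=g(a)+c'_{i-h(a)}-c'_{-h(a)}$, not $g(a)+c'_i$.

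The fix is short and uses ingredients you already have: since $\{c_i\}$ generates $G$, for every $a\in G$ there exists $\alpha\in\dis X$ with $\alpha((0,0))=(a,0)$; then $\Phi((a,0))=\Phi_D(\alpha)((0,0))$, and because $\Phi_D(\alpha)\in\dis{X'}$ (whose elements all fix the second coordinate) the second coordinate of $\Phi((a,0))$ is~$0$. With this one line added, the rest of your argument goes through. The paper sidesteps the issue by never asserting that $\Phi$ has the explicit form $(a,i)\mapsto(g(a),i)$; it needs only $\Phi((0,i))=(0,i)$ and reads off $g$ at the level of $\dis X/\dis X_{(0,0)}$.
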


\begin{proof}
  ``$\Rightarrow$'' Let $\Phi\colon G\times \Z_n\to G'\times \Z_{n'}$ be an isomorphism of the solutions. According to Proposition~\ref{prop:lengthcycles}, we have $n=n'$. According to~Proposition \ref{prop:trans}, we may suppose
  $\Phi((0,0))=(0,0)$. Since $\sigma_{(0,0)}((0,i))=(0,i+1)$, we have $\Phi((0,i))=(0,i)$, for each~$i\in\Z_n$.
  
By Proposition~\ref{prop:dis00}, groups $\dis{X}/\dis{X}_{(0,0)}$ and $(G,+,0)$ and $\dis{X'}/\dis{X'}_{(0,0)}$ and $(G',+,0)$ are isomorphic through the isomorphism $\Psi:\dis{X}/\dis{X}_{(0,0)}\to G$ such that $L_{(0,i)}\dis{X}_{(0,0)}\mapsto c_i$  and the isomorphism $\Psi':\dis{X'}/\dis{X'}_{(0,0)}\to G'$ such that $L'_{(0,i)}\dis{X'}_{(0,0)}\mapsto c'_i$. Let $g=\Psi'\Phi_{D}\Psi^{-1}$ and $\alpha\in \dis{X}$. Hence $\Phi_{D}(\alpha)(\Phi((0,0)))=\Phi(\alpha((0,0)))$. 
Since $\Phi((0,0))=(0,0)$ this implies that $\alpha \in \dis{X}_{(0,0)}$ if and only if $\Phi_D(\alpha)\in \dis{X'}_{(0,0)}$. Then
  \[g(c_i)=\Psi'\Phi_{D}(L_{(0,i)}\dis{X}_{(0,0)})=\Psi'(L_{(0,i)}'\dis{X'}_{(0,0)})=c_i'.\]

 

  ``$\Leftarrow$''  It is evident by straightforward calculations.
\end{proof}

\begin{example}
By Theorem \ref{thm:isoS}, solutions $\mathcal{S}(\Z_m\times \Z_2,{\bf c}=(0,1))$ and $\mathcal{S}(\Z_m\times \Z_2,{\bf c}=(0,g))$, where $g\in \Z^*_m$, 
are isomorphic. Hence there is exactly one 
non-isomorphic solution of the form  $\mathcal{S}(\Z_m\times \Z_2,{\bf c})$.
On the other hand, for any prime number $p$, there are $p+1$ non-isomorphic solutions of the form $\mathcal{S}(\Z_p\times \Z_3,{\bf c})$: $\mathcal{S}(\Z_p\times \Z_3,{\bf c}=(0,0,1))$
and $\mathcal{S}(\Z_p\times \Z_3,{\bf c}=(0,1,g))$,
for any $g\in\Z_p$.
\end{example}

 
We know that the displacement subgroup $\dis{G\times \Z_n}$ of a solution $\mathcal{S}(G\times \Z_n,{\bf c})$ is abelian but, by Proposition \ref{prop:dis00}, in general it is larger than the group~$(G,+,0)$. We shall show an example where this does not happen.


\begin{proposition}\label{prop:module}
Let $r,k\in \mathbb{N}$ and let~$(G,+,0)$ be a free $\Z_k$-module of rank~$r$.
 Let $e_1,\ldots,e_{r}$ be a free basis of~$(G,+,0)$.
 We set 
 $n=2r$, $c_0=0$, $c_i=\sum_{k=1}^i e_k$
 and $c_{i+r}=\sum_{k=i+1}^r e_k$,  for $i\in\{1,\ldots,r\}$. Then the solution $(X,\sigma,\tau)=\mathcal{S}(G\times \Z_n,{\bf c})$ is uniconnected, $(G,+,0)\cong\dis{X}$ and ${\mathcal{G}}(X)\cong G\rtimes_\alpha\Z_n$, where $\alpha(1)(e_i)=e_{i+1}$, for $i<r$, and $\alpha(1)(e_r)=-e_1$.
 
\end{proposition}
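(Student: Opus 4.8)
The plan is to verify the three assertions in order, using the structural results already developed.

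First I would check that the sequence $\mathbf{c}$ is admissible, i.e. that $c_0=0$ and that $\{c_i\mid i\in\Z_n\}$ generates $(G,+,0)$: indeed $c_1=e_1$, $c_2=e_1+e_2$, \dots, $c_r=e_1+\cdots+e_r$, so the differences $c_i-c_{i-1}=e_i$ are recovered and the $e_i$ generate $G$ by hypothesis. Hence $\mathcal{S}(G\times\Z_n,\mathbf{c})$ is a genuine solution by Theorem~\ref{th:main}, and $\mathcal{G}(X)=\dis{X}\rtimes\langle\sigma_{(0,0)}\rangle$.

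Next, for the claim $(G,+,0)\cong\dis{X}$: by Proposition~\ref{prop:dis00} it suffices to show $\dis{X}_{(0,0)}=\{\id\}$, equivalently that $\Psi\colon\prod L_{(0,i)}^{p_i}\mapsto\sum p_i\cdot c_i$ is injective. Since $\dis{X}$ is abelian and generated by the $L_{(0,i)}$, an element of the kernel is a relation $\sum_{i\in\Z_n}p_i\cdot c_i=0$ in $G$ which must already be a relation among the $L_{(0,i)}$ in $\dis{X}$; so I must show that the only $G$-linear relations among the $c_i$ come from relations forced on the $L_{(0,i)}$. The cleanest way is to compute $\dis{X}$ directly: $L_{(0,i)}$ acts on $G\times\{j\}$ by translation by $c_{i,j}=c_{i-j}-c_{-j}$, so $\dis{X}$ embeds in $G^n$ via $L_{(0,i)}\mapsto\vec{c}_i=(c_{i,j})_{j\in\Z_n}$. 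I would show the vectors $\vec{c}_1,\dots,\vec{c}_{n-1}$ are $\Z_k$-independent by exhibiting, for each $i$, a coordinate $j$ where $\vec{c}_i$ involves $e_i$ (or some chosen basis element) with coefficient a unit while the "later" vectors do not — a triangularity argument using the explicit staircase form of $c_i$ and $c_{i+r}$. This gives $\dis{X}\cong\Z_k^{n-1}$ as a $\Z_k$-module? No: the rank count must come out to $r$, so in fact the relations $\sum$ over a period collapse things; I expect the right statement is that $\dis{X}$, realised inside $G^n$, has exactly the relations making it isomorphic to $G$, and the map $\Psi$ (projection to the $j=0$ coordinate) is then an isomorphism. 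This dimension bookkeeping — showing the image of $\dis{X}$ in $G^n$ is exactly an isomorphic copy of $G$, with no extra rank — is the main obstacle, and I would handle it by writing the $n\times n$ "circulant-like" matrix of the $c_{i,j}$ and checking its row span is a rank-$r$ free $\Z_k$-module via the explicit basis $e_1,\dots,e_r$.

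Finally, for $\mathcal{G}(X)\cong G\rtimes_\alpha\Z_n$ with $\alpha(1)(e_i)=e_{i+1}$ for $i<r$ and $\alpha(1)(e_r)=-e_1$: by Theorem~\ref{th:main} we already have $\mathcal{G}(X)=\dis{X}\rtimes\langle\sigma_{(0,0)}\rangle$, and by Proposition~\ref{prop:lengthcycles} the cyclic factor $\langle\sigma_{(0,0)}\rangle=\langle\pi\rangle$ has order $n$. Identifying $\dis{X}$ with $G$ via $\Psi$, the conjugation action of $\pi$ on $\dis{X}$ is computed from Equation~\eqref{eq:Lpi}: $\pi L_{(0,i)}\pi^{-1}=L_{(0,1)}^{-1}L_{(0,i+1)}$, which under $\Psi$ sends the generator corresponding to $c_i$ to the element corresponding to $c_{i+1}-c_1$. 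Using the explicit staircase, $c_{i+1}-c_1 = e_2+\cdots+e_{i+1}$ for $i+1\le r$, so if I take as $\Z_k$-basis of $\dis{X}$ the elements mapping under $\Psi$ to $e_1=c_1,\ e_2=c_2-c_1,\dots$, conjugation by $\pi$ shifts $e_i\mapsto e_{i+1}$ for $i<r$; the wrap-around value $\alpha(1)(e_r)=-e_1$ then follows from $\alpha(1)^n=\id$ together with $c_{r+1}-c_1=c_{r+1}=e_2+\cdots+e_r$ (one period past), forcing $e_1\mapsto -(e_2+\cdots+e_r)+(\text{already-shifted part})$, i.e. the companion-matrix relation $e_1+e_2+\cdots$ collapses to $\alpha(1)(e_r)=-e_1$ exactly because $\alpha(1)$ must have order dividing $n=2r$ and the minimal polynomial forces this sign. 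I would double-check the wrap-around directly from the formula $c_{i+r}=e_{i+1}+\cdots+e_r$ rather than by abstract nonsense. Once the action is pinned down the semidirect product description is immediate, and uniconnectedness follows since $|\mathcal{G}(X)| = |G|\cdot n = |G\times\Z_n| = |X|$, so the transitive group $\mathcal{G}(X)$ is regular.
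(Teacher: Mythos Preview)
Your outline has a genuine gap in the middle step, the isomorphism $G\cong\dis{X}$. You correctly recognise that it amounts to showing $\Ker\Psi=\dis{X}_{(0,0)}$ is trivial, but your plan to prove this---showing the vectors $\vec{c}_1,\dots,\vec{c}_{n-1}$ are $\Z_k$-independent in $G^n$ via a triangularity argument---cannot work as stated: there are $n-1=2r-1$ such vectors, and they are \emph{not} independent. You notice this yourself (``the rank count must come out to $r$'') but never identify the missing relation. The key observation is simply
\[
  c_{r+i}=\sum_{k=i+1}^r e_k=c_r-c_i,\qquad 0<i\leq r,
\]
and one checks (using $c_i-c_{i-1}=e_i$ for $1\le i\le r$ and $c_{r+i}-c_{r+i-1}=-e_i$) that this lifts to $L_{(0,r+i)}=L_{(0,i)}^{-1}L_{(0,r)}$ as permutations. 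Hence $\dis{X}$ is generated over $\Z_k$ by the $r$ elements $L_{(0,1)},\dots,L_{(0,r)}$. Now the argument is short: the $\Z_k$-module $\dis{X}$ is $r$-generated, so $|\dis{X}|\le k^r$; but $\Psi$ surjects onto the free module $G\cong\Z_k^r$ of size $k^r$, so $\Psi$ is an isomorphism. Your proposed ``circulant-like matrix'' bookkeeping is unnecessary once this relation is in hand.

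For the conjugation action, the paper works with $D_{(0,i)}=L_{(0,i)}L_{(0,i-1)}^{-1}$ rather than $L_{(0,i)}$: one has $\Psi(D_{(0,i)})=c_i-c_{i-1}=e_i$ for $1\le i\le r$ and $\Psi(D_{(0,i)})=-e_{i-r}$ for $r<i\le 2r$, and Equation~\eqref{piDx} gives $\pi D_{(0,i)}\pi^{-1}=D_{(0,i+1)}$, from which $\alpha(1)(e_i)=e_{i+1}$ and $\alpha(1)(e_r)=-e_1$ are immediate. Your route via $L_{(0,i)}$ and~\eqref{eq:Lpi} does in fact give the same answer by direct computation---$\alpha(1)(e_i)=\alpha(1)(c_i-c_{i-1})=(c_{i+1}-c_1)-(c_i-c_1)=c_{i+1}-c_i$---but your actual argument for the wrap-around (``the minimal polynomial forces this sign'') is not a proof; just compute $c_{r+1}-c_r=-e_1$ directly. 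Your final uniconnectedness count $|\mathcal{G}(X)|=|G|\cdot n=|X|$ is fine.
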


\begin{proof}
 The group generated by $\{L_{(0,i)} \mid i\in\Z_n\}$
 is clearly a~$\Z_k$-module. Actually, this module is generated by $\{L_{(0,1)},\ldots,L_{(0,r)}\}$ since $L_{(0,0)}$ is
 the identity permutation and $L_{(0,r+i)}=L_{(0,i)}^{-1}L_{(0,r)}$ for each $0< i<r$.

By Proposition \ref{prop:dis00}, $(G,+,0)\cong\dis{X}/\dis{X}_{(0,0)}$ and the stabilizer $\dis{X}_{(0,0)}$ is the kernel of the group epimorphism $\Psi\colon \dis{X}\to G$ defined by $L_{(0,i)}\mapsto c_i$. Then, since for each $\gamma\in \dis{X}$ there is $a\in G$ such that $\gamma((0,0))=(a,0)$ and $e_1,\ldots,e_{r}$ is a free basis of~$(G,+,0)$,
the kernel of~$\Psi$  is trivial and $(G,+,0)\cong\dis{X}$.

Clearly, for $(b,j)\in G\times \Z_n$, $\pi((b,j))=\sigma_{(0,0)}((b,j))=(b,j+1)$, hence the order of $\pi$ is equal to $n$ and by Theorem \ref{th:main}, ${\mathcal{G}}(X)\cong G\rtimes \Z_n$.
Moreover, $\Psi(D_{(0,i)})=\Psi(L_{(0,i)}L_{(0,i-1)}^{-1})=c_i-c_{i-1}=e_i$,
for $0<i\leq r$, and $\Psi(D_{(0,i)})=-e_{i-r}$, for $r<i\leq 2r$.
The rest follows from Equation~\eqref{piDx}.
\end{proof}

\begin{corollary}
 Each finite abelian group embeds into the permutation group of an indecomposable uniconnected solution
 of multipermutation level~$2$.
\end{corollary}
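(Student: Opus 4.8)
The plan is to derive this immediately from Proposition~\ref{prop:module}. Given a finite abelian group $A$, I would first choose an exponent $k$ with $k\cdot A=0$ (for instance $k$ the exponent of $A$, or $k=|A|$), so that $A$ embeds into a free $\Z_k$-module of some finite rank $r$; concretely, writing $A\cong \Z_{k_1}\times\cdots\times\Z_{k_s}$ with each $k_i\mid k$, the group $A$ embeds into $(\Z_k)^s$, so take $r=s$ and $(G,+,0)=(\Z_k)^r$, the free $\Z_k$-module of rank~$r$. Then apply Proposition~\ref{prop:module} to this $(G,+,0)$: it produces an indecomposable uniconnected solution $(X,\sigma,\tau)=\mathcal{S}(G\times\Z_n,{\bf c})$ of multipermutation level~$2$ (with $n=2r$) whose permutation group $\mathcal{G}(X)\cong G\rtimes_\alpha\Z_n$ contains a copy of $G$ as the displacement subgroup $\dis{X}$. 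Since $A$ embeds into $G$ and $G$ embeds into $\mathcal{G}(X)$, the composite is an embedding of $A$ into $\mathcal{G}(X)$, which finishes the proof.

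The only point requiring a word of justification is the embedding of an arbitrary finite abelian group into a finite free $\Z_k$-module, which is the standard structure theorem argument: decompose $A$ into cyclic factors $\Z_{k_i}$, let $k=\operatorname{lcm}(k_i)$, and use the inclusions $\Z_{k_i}\hookrightarrow\Z_k$ sending a generator to $k/k_i$; these assemble to an injective homomorphism $A\hookrightarrow(\Z_k)^s$. I do not expect any genuine obstacle here — the content is entirely in Proposition~\ref{prop:module}, and the corollary is just the observation that $\dis{X}\cong G$ sits inside $\mathcal{G}(X)$ and that every finite abelian group is a subgroup of a finite free module over a suitable $\Z_k$. (If one wanted, one could even avoid computing an lcm by simply taking $k=|A|$ and $G=(\Z_{|A|})^{\,|A|}$, say, though the economical rank-$s$ choice is cleaner.)

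One small thing I would double-check when writing this up is that Proposition~\ref{prop:module} as stated allows the trivial module (rank $r=0$ or $k=1$), which would be needed only for the trivial group $A=\{0\}$; if there is any edge-case discomfort, handle $A=\{0\}$ separately by noting the one-element solution, or take $r\geq 1$ by padding with an extra $\Z_k$ factor into which $A$ still embeds.

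\begin{proof}
Let $A$ be a finite abelian group. By the structure theorem, $A\cong\Z_{k_1}\times\cdots\times\Z_{k_s}$ for some $k_1,\ldots,k_s\in\mathbb{N}$; set $k=\operatorname{lcm}(k_1,\ldots,k_s)$. For each $i$ the homomorphism $\Z_{k_i}\to\Z_k$ sending $1\mapsto k/k_i$ is injective, and these assemble to an embedding $A\hookrightarrow(\Z_k)^s$. Put $r=\max(s,1)$ and let $(G,+,0)$ be the free $\Z_k$-module of rank~$r$; then $A$ embeds into $(G,+,0)$. Apply Proposition~\ref{prop:module} with these $r,k$ and~$G$: it yields an indecomposable uniconnected solution $(X,\sigma,\tau)=\mathcal{S}(G\times\Z_n,{\bf c})$ of multipermutation level~$2$ (with $n=2r$) such that $(G,+,0)\cong\dis{X}\leq\mathcal{G}(X)$. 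Composing the embedding $A\hookrightarrow G$ with the inclusion $G\cong\dis{X}\hookrightarrow\mathcal{G}(X)$ exhibits $A$ as a subgroup of $\mathcal{G}(X)$.
\end{proof}
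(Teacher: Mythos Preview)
Your proof is correct and follows essentially the same approach as the paper: embed the given finite abelian group into a free $\Z_k$-module of finite rank (where $k$ is the exponent), then invoke Proposition~\ref{prop:module} to obtain a uniconnected solution whose displacement group is this module, hence contains the original group. Your version is more explicit about the embedding and handles the trivial-group edge case, but the argument is the same.
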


\begin{proof}
 Let~$(G,+,0)$ be a finite abelian group of an exponent~$k$ with~$r$ generators. Then $(G,+,0)$ embeds into the free $\Z_k$-module of rank~$r$. According to the Proposition \ref{prop:module},
 this module embeds into the permutation group of a uniconnected solution.
\end{proof}

\section{Homomorphic images}\label{sec:homo}

In this section we focus on homomorphic images of the solutions
constructed in the previous section. It turns out that there exists one universal indecomposable multipermutation solution of level~2, meaning that
every indecomposable multipermutation solution of level~2 is a homomorphic image of the universal one.
Since each image of an indecomposable multipermutation solution of level~2
is an indecomposable multipermutation solution of level at most~2,
we may then conclude that indecomposable multipermutation solutions of level at most~2
form the class of all the images of one universal solution.
This solution is constructed 
using the free abelian group of rank~$\omega$. 

Recall from Section~3 that,
for~$(X,\sigma,\tau)$ a solution, we choose~$\widetilde 0\in X$.  Let~$\pi=\sigma_{\widetilde 0}$ and let $\widetilde i=\pi^i(\widetilde 0)$, for each $i\in\N$.
 Let $L_{\widetilde i}=\sigma_{\widetilde i}\pi^{-1}$
 and $D_{\widetilde i}=L_{\widetilde i}L_{\widetilde{i-1}}^{-1}$.

\begin{proposition}\label{prop:homim}
Let $\bigoplus_\Z \Z$ be the free abelian group of rank~$\omega$ and let $\{e_i\mid i\in\Z\}$ be a free basis of $\bigoplus_\Z \Z$. Each indecomposable multipermutation solution of level at most~$2$ is a homomorphic image of the solution $\mathcal{S}((\bigoplus_\Z\Z)\times \Z,\mathbf{c})$, where
 \[c_i=\begin{cases}
        \sum_{k=1}^i e_k&\text{for }i >0,\\
        0&\text{for }i=0,\\
        \sum_{k=1}^{-i} -e_{1-k}&\text{for }i< 0.
       \end{cases}\]
\end{proposition}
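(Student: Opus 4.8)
The plan is to build the homomorphism explicitly by sending the free basis $e_i$ of $\bigoplus_\Z\Z$ to the displacement generators $D_{\widetilde i}$ of the target solution. Concretely, let $(X,\sigma,\tau)$ be an arbitrary indecomposable solution of multipermutation level at most~$2$, fix $\widetilde 0\in X$, put $\pi=\sigma_{\widetilde 0}$, $\widetilde i=\pi^i(\widetilde 0)$, $L_{\widetilde i}=\sigma_{\widetilde i}\pi^{-1}$ and $D_{\widetilde i}=L_{\widetilde i}L_{\widetilde{i-1}}^{-1}$ as in Section~3. Define $\Phi\colon(\bigoplus_\Z\Z)\times\Z\to X$ by
\[
\Phi\bigl((\textstyle\sum_i p_i e_i,\,j)\bigr)=\Bigl(\prod_i D_{\widetilde i}^{\,p_i}\Bigr)\bigl(\widetilde j\bigr),
\]
the product being well-defined because $\dis{X}$ is abelian by Proposition~\ref{prop:normal}. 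First I would record, using the identities \eqref{L_to_D2}, \eqref{L_to_D3} from the proof of Proposition~\ref{prop:3}, that $\prod_i D_{\widetilde i}^{\,(c_k)_i}=L_{\widetilde k}$, where $(c_k)_i$ denotes the $e_i$-coordinate of the vector $c_k$ from the statement; this is exactly why the particular sequence $\mathbf c$ was chosen. So $\Phi((c_k,0))=L_{\widetilde k}(\widetilde 0)=\sigma_{\widetilde k}\pi^{-1}(\widetilde 0)=\sigma_{\widetilde k}(\widetilde{-1})$, and more generally $\Phi$ carries the ``coordinate'' piece to the left-translation structure of $(X,L,\mathbf R)$.

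Next I would verify the homomorphism condition $\Phi\circ\sigma^{\mathcal S}_{(v,i)}=\sigma_{\Phi((v,i))}\circ\Phi$, where $\sigma^{\mathcal S}$ denotes the permutations of $\mathcal{S}((\bigoplus_\Z\Z)\times\Z,\mathbf c)$. By the formula in Theorem~\ref{th:main}, $\sigma^{\mathcal S}_{(v,i)}((w,j))=(w+c_{i-j-1}-c_{-j-1},\,j+1)$, so applying $\Phi$ gives $\bigl(\prod D_{\widetilde\ell}^{\,w_\ell}\bigr)L_{\widetilde{i-j-1}}L_{\widetilde{-j-1}}^{-1}(\widetilde{j+1})$ after the coordinate computation above, using commutativity of $\dis{X}$. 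On the other side, $\sigma_{\Phi((v,i))}=\sigma_{\gamma(\widetilde i)}$ for $\gamma=\prod D_{\widetilde\ell}^{\,v_\ell}\in\dis X$, and by \eqref{eq:disX} this equals $\sigma_{\widetilde i}$; then I would push the displacement element $\prod D_{\widetilde\ell}^{\,w_\ell}$ past $\sigma_{\widetilde i}$ using normality of $\dis X$ together with \eqref{piDx} (which tells us how $\pi$, and hence any $\sigma_{\widetilde i}=\sigma_{\widetilde 1}^i\sigma_{\widetilde 0}^{1-i}$, conjugates the $D$'s). Matching the two sides reduces to the identity $L_{\widetilde i}\pi L_{\widetilde{-j-1}}\cdots=L_{\widetilde{i-j-1}}L_{\widetilde{-j-1}}^{-1}L_{\widetilde j}\pi\cdots$ type relation, which follows from \eqref{piLxpi} and the abelianness of $\dis X$; this bookkeeping is the one genuinely computational step.

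Finally, surjectivity: since $(X,\sigma,\tau)$ is indecomposable, $\mathcal G(X)$ acts transitively on $X$, and by Proposition~\ref{prop:3} every $\sigma_x$ equals some $\sigma_{\widetilde k}=L_{\widetilde k}\pi$ with $\dis X=\langle D_{\widetilde k}\mid k\in\Z\rangle$; hence every element of $X$ has the form $\gamma(\widetilde j)$ for some $\gamma\in\dis X$ and some $j\in\Z$, which is visibly in the image of $\Phi$. I expect the main obstacle to be the conjugation bookkeeping in the middle step: one must be careful that the indices in $c_{i-j-1}-c_{-j-1}$ translate correctly into products of $D_{\widetilde k}$'s over the right range, and that the shifts introduced by the $\pi^j=\sigma_{\widetilde 0}^j$ factors (via repeated use of \eqref{piDx}) line up — handling infinite formal sums $\sum p_i e_i$ causes no trouble since only finitely many $p_i$ are nonzero, but keeping the telescoping identities \eqref{L_to_D1}–\eqref{L_to_D3} straight for both positive and negative indices requires care.
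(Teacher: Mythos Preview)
Your overall strategy is right, but the explicit map you write down is not a homomorphism. You set
\[
\Phi\Bigl(\bigl(\textstyle\sum_i p_i e_i,\,j\bigr)\Bigr)=\Bigl(\prod_i D_{\widetilde i}^{\,p_i}\Bigr)(\widetilde j),
\]
with the $D$-indices independent of~$j$. To see this fails, test it on the universal solution $\mathcal S((\bigoplus_\Z\Z)\times\Z,\mathbf c)$ itself: there $\widetilde j=(0,j)$ and a short computation gives $D_{(0,i)}((b,j))=(b+e_{i-j},j)$, so your $\Phi$ sends $(\sum p_ie_i,j)$ to $(\sum p_ie_{i-j},j)$, which is not the identity (nor an automorphism) when $j\neq 0$. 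More concretely, carrying out your own verification, the right-hand side $\sigma_{\Phi((v,i))}\Phi((w,j))$ becomes $L_{\widetilde i}\prod_\ell D_{\widetilde{\ell+1}}^{\,w_\ell}(\widetilde{j+1})$ after using~\eqref{piDx}, whereas the left-hand side is $\prod_\ell D_{\widetilde\ell}^{\,w_\ell}\,L_{\widetilde{i-j-1}}L_{\widetilde{-j-1}}^{-1}(\widetilde{j+1})$. Comparing these for $w=0$ and $w=e_1$ forces $D_{\widetilde 1}=D_{\widetilde 2}$, which is false in general.

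The fix is exactly what the paper does: build the $j$-shift into the definition,
\[
\Phi\Bigl(\bigl(\textstyle\sum_i p_i e_i,\,j\bigr)\Bigr)=\Bigl(\prod_i D_{\widetilde{\,i+j}}^{\,p_i}\Bigr)(\widetilde j).
\]
Then the extra shift produced by pushing $\pi$ through $\prod D_{\widetilde{\ell+j}}^{\,w_\ell}$ via~\eqref{piDx} is absorbed, and the $c_{i-j-1}-c_{-j-1}$ term collapses, after the reindexing $\ell\mapsto\ell+j+1$, to $\prod_{m=1}^{i}D_{\widetilde m}=L_{\widetilde i}$ by~\eqref{L_to_D2} (resp.~\eqref{L_to_D3} for $i<0$). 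Your surjectivity argument and your reductions via \eqref{eq:disX}, \eqref{L_to_D2}, \eqref{L_to_D3} are fine; only the definition of $\Phi$ needs this correction.
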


\begin{proof}
Let~$(X,\sigma,\tau)$ be an indecomposable multipermutation solution of level at most~$2$.
 It is easy to see that $c_i-c_j=\sum_{k=j+1}^i e_k$, for any $i\geq j$. 
Let 
 us define $\Phi\colon (\bigoplus_\Z\Z)\times \Z\to X$ by 
 \[\Phi ((\sum r_k\cdot e_k,j))=\prod D_{\widetilde {k+j}}^{r_{k}} (\widetilde j).\]
 Now, for $i\geq 0$,
 \begin{align*}
  \Phi (\sigma_{(\sum r_k\cdot e_k,i)}((\sum s_k\cdot e_k,j)))&=\Phi((\sum s_k\cdot e_k+c_{i-j-1}-c_{-j-1},j+1))=\\
  \Phi((\sum s_k\cdot e_{k}+\sum_{\ell=-j}^{i-j-1}e_\ell,j+1))&=\prod D_{\widetilde{k+j+1}}^{s_k} \prod_{\ell=-j}^{i-j-1} D_{\widetilde{\ell+j+1}}(\widetilde{j+1})=\\
  &\prod D_{\widetilde{k+j+1}}^{s_k} \prod_{\ell=1}^{i} D_{\widetilde{\ell}}(\widetilde{j+1})
  \stackrel{\eqref{L_to_D2}}{=}
  L_{\widetilde i} \prod D_{\widetilde{k+j+1}}^{s_k}(\widetilde {j+1}),\\
 \sigma_{\Phi((\sum r_k\cdot e_k,i))} (\Phi((\sum s_k\cdot e_k,j))) &=
  \sigma_{\prod D_{\widetilde {k+i}}^{r_{k}}(\widetilde i)} \prod D_{\widetilde {k+j}}^{s_{k}} (\widetilde j)
  \stackrel{\eqref{eq:disX}}{=}
  \sigma_{\widetilde i} \prod D_{\widetilde {k+j}}^{s_{k}} (\widetilde j)=\\
  &L_{\widetilde i}\pi \prod D_{\widetilde {k+j}}^{s_{k}} (\widetilde j)
  \stackrel{\eqref{piDx}}{=}
  L_{\widetilde i}\prod D_{\widetilde {k+j+1}}^{s_{k}} \pi(\widetilde j)= L_{\widetilde i}\prod D_{\widetilde {k+j+1}}^{s_{k}} (\widetilde{j+1}).
 \end{align*}
  For $i< 0$ we use $c_{i-j-1}-c_{-j-1}=-\sum_{k=i-j}^{-j-1}e_k
  =\sum_{k=i+1}^{0} -e_k$ and then instead of $\eqref{L_to_D2}$ we apply
  $\eqref{L_to_D3}$. Hence $\Phi$ is a homomorphism which is clearly onto.
\end{proof}

The researchers tend to focus on finite solutions and therefore it is useful to know that a finite solution is an image of a finite construction.

\begin{proposition}\label{prop:homfin}
 Let~$(X,\sigma,\tau)$ be an indecomposable multipermutation solution of level at most~$2$. Then it is a homomorphic image of the solution $(Y,\rho,\upsilon)=\mathcal{S}(\dis{X}\times \Z_n,\mathbf{c})$, where $n=o(\pi)$ and $c_i=L_{\widetilde i}$.
\end{proposition}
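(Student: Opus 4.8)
The statement to prove is that every indecomposable multipermutation solution $(X,\sigma,\tau)$ of level at most $2$ is a homomorphic image of $(Y,\rho,\upsilon)=\mathcal{S}(\dis{X}\times\Z_n,\mathbf{c})$, where $n=o(\pi)$ with $\pi=\sigma_{\widetilde 0}$ and $c_i=L_{\widetilde i}\in\dis{X}$. The natural approach is to mimic the construction of $\Phi$ in the proof of Proposition~\ref{prop:homim}, but now using the \emph{finite} group $\dis{X}$ in place of the free abelian group $\bigoplus_\Z\Z$. First I would check that the data $(\dis{X},+,0)$ together with $\mathbf{c}=(c_i)_{i\in\Z_n}=(L_{\widetilde i})_{i\in\Z_n}$ actually satisfies the hypotheses of Theorem~\ref{th:main}: $\dis{X}$ is abelian (Proposition~\ref{prop:normal}), $c_0=L_{\widetilde 0}=\mathrm{id}$ is its neutral element, and by Proposition~\ref{prop:3}(3) the set $\{L_{\widetilde i}\mid i\in\Z\}$ generates $\dis{X}$; moreover $L_{\widetilde i}$ depends only on $i\bmod n$ since $\sigma_{\widetilde i}=\sigma_{\widetilde{i+n}}$ (as $\pi^n=\mathrm{id}$), so the sequence is well-defined on $\Z_n$. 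Hence $(Y,\rho,\upsilon)=\mathcal{S}(\dis{X}\times\Z_n,\mathbf{c})$ is a legitimate indecomposable solution of multipermutation level $2$.

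\textbf{The homomorphism.} I would define $\Phi\colon\dis{X}\times\Z_n\to X$ by $\Phi((\alpha,j))=\alpha(\widetilde j)$, using that $\dis{X}$ acts on $X$ by permutations. This is surjective: $\mathcal{G}(X)$ is transitive and by Proposition~\ref{prop:normal} every $\chi\in\mathcal{G}(X)$ can be written $\chi=\alpha\,\pi^j$ with $\alpha\in\dis{X}$, so every element of $X$ is of the form $\alpha(\widetilde j)$. To verify $\Phi\rho_{(\alpha,i)}=\sigma_{\Phi((\alpha,i))}\Phi$, recall from the construction in Theorem~\ref{th:main} that $\rho_{(\alpha,i)}((\beta,j))=(\beta+c_{i-j-1}-c_{-j-1},\,j+1)=(\beta\,L_{\widetilde{i-j-1}}L_{\widetilde{-j-1}}^{-1},\,j+1)$ in multiplicative notation. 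Applying $\Phi$ gives $\beta\,L_{\widetilde{i-j-1}}L_{\widetilde{-j-1}}^{-1}(\widetilde{j+1})$. On the other side, $\sigma_{\Phi((\alpha,i))}=\sigma_{\alpha(\widetilde i)}\stackrel{\eqref{eq:disX}}{=}\sigma_{\widetilde i}=L_{\widetilde i}\pi$, so $\sigma_{\Phi((\alpha,i))}\Phi((\beta,j))=L_{\widetilde i}\,\pi\,\beta(\widetilde j)$. The key identity to close the gap is that $\pi\,\beta\,\pi^{-1}$ conjugates each generator $L_{\widetilde k}$ to $L_{\widetilde 1}^{-1}L_{\widetilde{k+1}}$ (Equation~\eqref{piLx}, with $\widetilde 0$ chosen as the base point), so that $\pi\beta=\beta'\pi$ for a suitable $\beta'\in\dis{X}$, together with the telescoping formulas \eqref{L_to_D2} and \eqref{L_to_D3}. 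This is precisely the computation carried out in the proof of Proposition~\ref{prop:homim}, only with $D_{\widetilde k}^{s_k}$ replaced by the abstract element $\beta$ of $\dis{X}$; I would therefore either repeat that computation verbatim or, more economically, observe that $\Phi$ factors as $\Phi=\Phi_{\mathrm{free}}\circ q$, where $\Phi_{\mathrm{free}}\colon(\bigoplus_\Z\Z)\times\Z\to X$ is the homomorphism of Proposition~\ref{prop:homim} and $q\colon(\bigoplus_\Z\Z)\times\Z\to\dis{X}\times\Z_n$ is the evident reduction map $((\sum r_k e_k),j)\mapsto(\prod L_{\widetilde k}^{r_k},\,j\bmod n)$.

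\textbf{Main obstacle.} The one genuine point requiring care is that this factorization is legitimate, i.e.\ that $q$ is itself a solution homomorphism and that $\Phi_{\mathrm{free}}$ descends through it — equivalently, that $\Phi_{\mathrm{free}}$ is constant on the fibres of $q$. Constancy on fibres amounts to: if $\sum r_k e_k$ and $\sum r'_k e_k$ have the same image in $\dis{X}$, i.e.\ $\prod L_{\widetilde k}^{r_k}=\prod L_{\widetilde k}^{r'_k}$ in $\dis{X}$, and $j\equiv j'\pmod n$, then $\prod D_{\widetilde{k+j}}^{r_k}(\widetilde j)=\prod D_{\widetilde{k+j'}}^{r'_k}(\widetilde{j'})$ in $X$; this follows because $\widetilde j=\widetilde{j'}$, because by \eqref{piDx} conjugation by $\pi^j$ carries $\prod D_{\widetilde k}^{r_k}$ to $\prod D_{\widetilde{k+j}}^{r_k}$, and because the relation \eqref{L_to_D1} expresses each $D_{\widetilde k}$ as a ratio of the $L_{\widetilde k}$, so that $\prod L_{\widetilde k}^{r_k}=\prod L_{\widetilde k}^{r'_k}$ forces $\prod D_{\widetilde k}^{r_k}=\prod D_{\widetilde k}^{r'_k}$ as elements of $\dis{X}$, hence the same permutation of $X$. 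That $q$ respects the solution structure is a direct comparison of the formulas for $\sigma$ in the two copies of the construction, using $c_i^{\,Y}=L_{\widetilde i}$ and $c_i-c_j=\sum_{k=j+1}^i e_k\mapsto L_{\widetilde i}L_{\widetilde j}^{-1}$ (by \eqref{L_to_D1}). Once this is in place, $\Phi=\Phi_{\mathrm{free}}\circ q^{-1}$ on the level of sets is well defined, is a solution homomorphism as a composite of the reduction (a homomorphism) with $\Phi_{\mathrm{free}}$, and is onto since $\Phi_{\mathrm{free}}$ is onto. I expect the bookkeeping with the index shifts $k\mapsto k+j$ and the sign split $i\geq 0$ versus $i<0$ (handled via \eqref{L_to_D2} resp.\ \eqref{L_to_D3}) to be the only tedious part; there is no conceptual difficulty beyond what already appears in Proposition~\ref{prop:homim}.
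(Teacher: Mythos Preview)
Your direct map $\Phi((\alpha,j))=\alpha(\widetilde j)=\alpha\pi^j(\widetilde 0)$ is \emph{not} a homomorphism in general, precisely because the paper is concerned with the case where $\mathcal{G}(X)$ is non-abelian, so $\alpha\pi^j\neq\pi^j\alpha$. A concrete failure: in the dihedral solution of Example~\ref{exm:Rump} Case~1 (so $\widetilde 0=0$, $\pi(b)=1-b$, $n=2$, $L_{\widetilde 1}(b)=b-2$) one computes $\rho_{(\mathrm{id},1)}((\mathrm{id},0))=(L_{\widetilde 1}^{-1},1)$, hence $\Phi(\rho_{(\mathrm{id},1)}((\mathrm{id},0)))=L_{\widetilde 1}^{-1}(\widetilde 1)=3$, whereas $\sigma_{\Phi((\mathrm{id},1))}(\Phi((\mathrm{id},0)))=\sigma_1(0)=-1$, and $3\neq -1$ once $m\geq 3$. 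The paper instead uses $\Phi((\gamma,j))=\pi^j\gamma(\widetilde 0)$, and the order of $\pi^j$ and~$\gamma$ is exactly what makes the verification via~\eqref{piLxpi} go through.

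Your fallback factorization through Proposition~\ref{prop:homim} also contains a slip: with the constants $c_i=\sum_{k\le i}e_k$ in the free solution, one has $e_k=c_k-c_{k-1}$, so the reduction $q$ must send $e_k\mapsto D_{\widetilde k}=L_{\widetilde k}L_{\widetilde{k-1}}^{-1}$, not $e_k\mapsto L_{\widetilde k}$. With your $q$ the map is not a solution homomorphism (the required identity $\prod_{k=-j}^{\,i-j-1}L_{\widetilde k}=L_{\widetilde{i-j-1}}L_{\widetilde{-j-1}}^{-1}$ is false; only the $D$-version, namely~\eqref{L_to_D1}, telescopes), and your claim that $\prod L_{\widetilde k}^{r_k}=\prod L_{\widetilde k}^{r'_k}$ forces $\prod D_{\widetilde k}^{r_k}=\prod D_{\widetilde k}^{r'_k}$ is false in general (take $L_{\widetilde 1}=L_{\widetilde 2}\neq\mathrm{id}$). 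Once you correct $q$ to use the $D$'s, the induced map on $\dis{X}\times\Z_n$ is precisely $(\gamma,j)\mapsto\pi^j\gamma(\widetilde 0)$, the paper's formula, and the descent argument becomes straightforward via~\eqref{piDx}.
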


\begin{proof}
We prove that the mapping $\Phi\colon Y\to X$ given by
 $(\gamma,j)\mapsto \pi^j\gamma(\widetilde {0})$ is a homomorphism of solutions: ~$(Y,\rho,\upsilon)$ and ~$(X,\sigma,\tau)$. Indeed, for $(\beta,i),(\gamma,j)\in \dis{X}\times \Z_n$
 \begin{multline*}
  \Phi(\rho_{(\beta,i)}((\gamma,j)))=\Phi
  ((\gamma L_{\widetilde{i-j-1}}L_{\widetilde{-j-1}}^{-1},j+1))=
  \pi^{j+1} L_{\widetilde{i-j-1}}L_{\widetilde{-j-1}}^{-1}\gamma(\widetilde {0})
 \stackrel{\eqref{piLxpi}}{=}\\
L_{\widetilde{i}}\pi^{j+1}L_{\pi^{-j-1}(\widetilde{i})}^{-1}L_{\widetilde{i-j-1}}\gamma(\widetilde 0)= 
L_{\widetilde{i}}\pi\pi^{j}\gamma(\widetilde 0)
=\sigma_{\widetilde{i}}\pi^j\gamma(\widetilde 0)
\stackrel{\eqref{eq:disX}}{=}
  \sigma_{\pi^i\beta(\widetilde 0)}\pi^j\gamma(\widetilde 0)=
  \sigma_{\Phi((\beta,i))}(\Phi((\gamma,j))).
 \end{multline*}
The homomorphism~$\Phi$ is onto since the permutation group $\mathcal{G}(X)$
is transitive.
\end{proof}

We can see in the proof that we can replace the group~$\dis{X}$ by the group $\dis{X}/\dis{X}_{\tilde 0}$ and it works as well.
This gives us a characterization of all the solutions that can be obtained by the construction described in Theorem  \ref{th:main}.
\begin{corollary}
An indecomposable multipermutation solution ~$(X,\sigma,\tau)$ of level at most~$2$ is isomorphic to the solution of the form $\mathcal{S}(G\times \Z_n,\mathbf{c})$, for some $n\in \mathbb{N}$, a group $(G,+,0)$ and a sequence   $\mathbf{c}\in G^n$, if and only if $\dis{X}\cap \langle \pi\rangle=\{\id\}$ and $\mathcal{G}(X)_{\widetilde {0}}\subseteq\dis{X}$.
\end{corollary}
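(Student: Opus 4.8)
The plan is to establish the corollary as a straightforward consequence of Proposition~\ref{prop:homfin} together with a direct verification of the ``only if'' direction using the structural facts already proved about $\mathcal{S}(G\times\Z_n,\mathbf{c})$. Concretely, for the ``if'' direction I would take an indecomposable multipermutation solution $(X,\sigma,\tau)$ of level at most~$2$ satisfying $\dis{X}\cap\langle\pi\rangle=\{\id\}$ and $\mathcal{G}(X)_{\widetilde 0}\subseteq\dis{X}$, and apply Proposition~\ref{prop:homfin} together with the remark following it, which allows replacing $\dis{X}$ by $\dis{X}/\dis{X}_{\widetilde 0}$. So the candidate solution is $\mathcal{S}(G\times\Z_n,\mathbf{c})$ with $G=\dis{X}/\dis{X}_{\widetilde 0}$, $n=o(\pi)$ and $c_i$ the image of $L_{\widetilde i}$ in $G$; one notes $G$ is indeed generated by $\{c_i\mid i\in\Z_n\}$ by Proposition~\ref{prop:3}(3). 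The task is then to show the homomorphism $\Phi$ from that proof is injective. Here I would use the hypotheses: a pair $(\gamma,j)$ maps to $\pi^j\gamma(\widetilde 0)$, and equality $\pi^j\gamma(\widetilde 0)=\pi^{j'}\gamma'(\widetilde 0)$ means $\pi^{j'-j}\in\dis{X}\cdot\mathcal{G}(X)_{\widetilde 0}$ acting suitably; since $\mathcal{G}(X)_{\widetilde 0}\subseteq\dis{X}$, this forces $\pi^{j'-j}\in\dis{X}\cap\langle\pi\rangle=\{\id\}$, so $j\equiv j'\pmod n$, and then $\gamma^{-1}\gamma'\in\dis{X}_{\widetilde 0}$, i.e. the two representatives agree in $G$. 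This gives injectivity, hence $\Phi$ is an isomorphism.

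For the ``only if'' direction, I would assume $(X,\sigma,\tau)\cong\mathcal{S}(G\times\Z_n,\mathbf{c})$ and transport the two conditions across the isomorphism, so it suffices to check them for $(X,\sigma,\tau)=\mathcal{S}(G\times\Z_n,\mathbf{c})$ directly. For this solution $\pi$ sends $(a,i)\mapsto(a,i+1)$, so $\langle\pi\rangle$ consists of the maps $(a,i)\mapsto(a,i+k)$; meanwhile every element of $\dis{X}$ fixes the second coordinate by the description $L_{(0,i)}((b,j))=(b+c_{i,j},j)$ (this is essentially the last sentence of the proof of Theorem~\ref{th:main}), so $\dis{X}\cap\langle\pi\rangle=\{\id\}$. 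For the stabilizer condition, $\mathcal{G}(X)=\dis{X}\rtimes\langle\pi\rangle$ by Theorem~\ref{th:main}; an element $\alpha\pi^k$ fixes $\widetilde 0=(0,0)$ iff $\pi^k(0,0)=\alpha^{-1}(0,0)$, and since $\alpha^{-1}(0,0)$ has second coordinate $0$ while $\pi^k(0,0)=(0,k)$, we get $k\equiv 0\pmod n$, so $\alpha\pi^k=\alpha\in\dis{X}$; thus $\mathcal{G}(X)_{\widetilde 0}\subseteq\dis{X}$.

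The only place requiring care is checking that $\widetilde 0$ in the abstract statement may be taken to be the distinguished element $(0,0)$ under the isomorphism, which is legitimate because Proposition~\ref{prop:trans} gives that $\aut{\mathcal{S}(G\times\Z_n,\mathbf{c})}$ is transitive, so after composing with an automorphism the isomorphism carries the chosen $\widetilde 0$ of $X$ to $(0,0)$; and the two conditions in the statement do not depend on the choice of $\widetilde 0$ since, by Proposition~\ref{prop:Auttrans} (the transitivity of the automorphism group of any indecomposable multipermutation solution of level~$2$), any two choices are related by an automorphism, which conjugates $\pi$ and $\mathcal{G}(X)_{\widetilde 0}$ accordingly while fixing $\dis{X}$ setwise as it is normal. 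I expect the main obstacle to be bookkeeping this base-point independence cleanly rather than any substantive difficulty; the injectivity argument and the two verifications are short once the setup is in place.
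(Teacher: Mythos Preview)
Your proposal is correct and follows essentially the same approach as the paper: for the ``if'' direction you pass to $G=\dis{X}/\dis{X}_{\widetilde 0}$ and verify that the homomorphism $\Phi$ of Proposition~\ref{prop:homfin} is injective via exactly the chain $(\gamma')^{-1}\pi^{j-j'}\gamma\in\mathcal{G}(X)_{\widetilde 0}\subseteq\dis{X}\Rightarrow\pi^{j-j'}\in\dis{X}\cap\langle\pi\rangle=\{\id\}$, which is the paper's argument; for the ``only if'' direction the paper simply says ``it follows from the construction'', and your explicit verification using the semidirect decomposition of Theorem~\ref{th:main} unpacks precisely that. Your extra paragraph on base-point independence is a legitimate point the paper leaves implicit (relying on the transitivity of $\aut{X}$ noted earlier), so you are being more careful, not different.
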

\begin{proof}
``$\Leftarrow$'' 
By Proposition \ref{prop:homfin}  it is sufficient to show that for solutions~$(X,\sigma,\tau)$ with $\dis{X}\cap \langle \pi\rangle=\{\id\}$ and $\mathcal{G}(X)_{\widetilde {0}}\subseteq\dis{X}$, the mapping  $\Phi\colon X\to \dis{X}/{\dis{X}_{\widetilde {0}}}$ given by
 $(\gamma,j)\mapsto \pi^j\gamma(\widetilde {0})/{\dis{X}_{\widetilde {0}}}$ is an injection. Let $(\gamma,j),(\gamma',j')\in \dis{X}\times \Z_n$. Then 
 \[
\pi^j\gamma(\widetilde {0})= \pi^{j'}\gamma'(\widetilde {0})\;\Leftrightarrow\;(\gamma')^{-1}\pi^{j-j'}\gamma(\widetilde {0})= \widetilde {0}\;\Leftrightarrow\;(\gamma')^{-1}\pi^{j-j'}\gamma\in \mathcal{G}(X)_{\widetilde {0}}.
 \]
The assumption $\mathcal{G}(X)_{\widetilde {0}}\subseteq\dis{X}$ forces $\pi^{j-j'}$ to belong to $\dis{X}$. Further, by  $\dis{X}\cap \langle \pi\rangle=\{\id\}$, we obtain $\pi^{j-j'}=\id$. Hence, $(\gamma')^{-1}\gamma\in \dis{X}_{\widetilde {0}}$, which proves that $\gamma\equiv \gamma'\pmod {\dis{X}_{\widetilde {0}}}$.
 
``$\Rightarrow$'' It follows from the construction of the solution $\mathcal{S}(G\times \Z_n,\mathbf{c})$. 
\end{proof}

In particular, whenever
$(X,\sigma,\tau)$ is uniconnected and $\dis{X}\cap \langle \pi\rangle$ is trivial, the mapping $\Phi$ from the proof of Proposition \ref{prop:homfin} is an isomorphism.
One such example comes from Example \ref{exm:unisemi}.
\begin{remark}\label{rem:unisemi}
Let us consider the solution $(G\times \Z_n, \sigma,\tau)$  from Example \ref{exm:unisemi}. In this case $\dis{G\times \Z_n}\cong (G,+,0)$, 
\[
\pi((b,j))=\sigma_{(0,0)}((b,j))=(\alpha^{-h}(b)-\alpha^{-h}(g),j-h),
\]
and for $i\in \Z_n$
\begin{multline*}
L_{\widetilde i}((b,j))=L_{\pi^i((0,0))}((b,j))=\sigma_{\pi^i((0,0))}\sigma^{-1}_{(0,0)}((b,j))=\sigma_{(-\sum_{r=1}^{i}\alpha^{-rh}(g),-ih)}\sigma^{-1}_{(0,0)}((b,j))=\\
(b+\alpha^{-h}(g)-\alpha^{-ih-h}(g),j).
\end{multline*}
Therefore, we obtain that 
$(G\times \Z_n, \sigma,\tau)$
is isomorphic to the solution $\mathcal{S}(G\times \Z_n,{\bf c})$, for ${\bf c}=(c_i)_{i\in \Z_n}=(\alpha^{-h}(g)-\alpha^{-(i+1)h}(g))_{i\in \Z_n}\in G^n$
. Clearly, by Theorem \ref{thm:isoS} this solution is isomorphic to the solution $\mathcal{S}(G\times \Z_n,{\bf c})$, for ${\bf c}=(-\alpha^{-(i+1)h}(g))_{i\in \Z_n}\in G^n$. 
 \end{remark}


\begin{example}
By Remark \ref{rem:unisemi} the solution $(\Z_2\times \Z_2\times \Z_3,\sigma,\tau)$, with
 \[
\sigma_{((a_1,a_2),i)}(((b_1,b_2),j))=(\alpha^{2}((b_1,b_2))+\alpha^{i-1}((1,0)),j-1)=((b_1+b_2,b_1)+\alpha^{i-1}((1,0)),j-1),
\] and $\alpha=\left(\begin{smallmatrix}
0&1\\
1&1\end{smallmatrix}\right)$,
described in Example \ref{exm:uni223}, is isomorphic to the solution:
$$\mathcal{S}(\Z_2\times \Z_2\times \Z_3,{\bf c}=((0,0),(1,0),(0,1))).$$
In this case $c_{0,j}=(0,0)$, $c_{1,j}=(1,0)$ and $c_{2,j}=(0,1)$ and
 \[
\sigma_{((a_1,a_2),i)}(((b_1,b_2),j))=((b_1,b_2)+c_{i,j+1},j+1).
\]
\end{example}

\subsection*{Congruences}

In the sequel we focus on homomorphic images
of the solutions. It is well known that homomorphisms of solutions and equivalence relations preserving the structure of solutions are closely related:
let $(X,\sigma,\tau)$ be a solution. An equivalence relation $\mathord{\asymp}\subseteq X\times X$ such that for $x_1,x_2,y_1,y_2\in X$ 
\begin{align}\label{congr}
&x_1\asymp x_2\;\; {\rm and} \;\; y_1\asymp y_2\quad \Rightarrow\quad \sigma^{\varepsilon}_{x_1}(y_1)\asymp \sigma^{\varepsilon}_{x_2}(y_2),
\end{align}
where $\varepsilon\in \{-1,1\}$, is called a \emph{congruence} of the solution $(X,\sigma,\tau)$.

If $\Phi\colon X\to X'$ is a homomorphism
from a~solution~$(X,\sigma,\tau)$ to a solution~$(X',\sigma',\tau')$ then the \emph{kernel} of $\Phi$, defined by 
\begin{align*}
&x_1\kker\Phi~x_2\quad \Leftrightarrow\quad \Phi(x_1)=\Phi(x_2)
\end{align*}
is a congruence of~$(X,\sigma,\tau)$. 
On the other hand, for a congruence $\asymp$ of the solution $(X,\sigma,\tau)$, let $\chi\colon X\to X/\mathord{\asymp}$ defined by $(a,i)\mapsto (a,i)/\mathord{\asymp}$ be the \emph{natural projection} from the solution to the quotient one. Clearly, $\chi$ is onto and $ker\chi=\mathord{\asymp}$.

Moreover, for any epimorphism $\Phi\colon X\to X'$, the solution~$(X',\sigma',\tau')$ is isomorphic to the quotient solution $(X,\sigma,\tau)/{\kker\Phi}$. Hence, a solution is a homomorphic image of a solution $(X,\sigma,\tau)$ if and only if it is isomorphic to a quotient of  $(X,\sigma,\tau)$ by some congruence. Thus the problem of finding all homomorphic images of  $(X,\sigma,\tau)$ reduces to the problem of finding all congruences of $(X,\sigma,\tau)$.
\begin{lemma}\label{lem:5.3}
Let $(X,\sigma,\tau)=\mathcal{S}(G\times \Z_n,\mathbf{c})$ and $\asymp$ be a congruence of the solution $(X,\sigma,\tau)$.
 Then
 \begin{enumerate}
  \item 
 there exists~$H$, a subgroup of~$(G,+,0)$ such that, for all~$a,b\in G$ and each $i\in\Z_n$,
  \[ 
 (a,i)\asymp (b,i)\qquad \text{ if and only if}\qquad a-b\in H;
 \]
 \item there exists $m\in\N\cup\{\infty\}$ such that
 \begin{itemize}
 \item $m$ is a divisor of~$n$, if $n$ is finite,
  \item if 
   $(a,i)\asymp (b,j)$, for some $a,b\in G$ and $i,j\in\Z_n$, then $i-j\equiv 0\pmod m$,
  \item if $m$ is finite then $c_i-c_{i+m}\in H$, for all $i\in\Z_n$.
 \end{itemize}
 \end{enumerate}
\end{lemma}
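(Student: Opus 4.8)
The plan is to fix a congruence $\asymp$ of $(X,\sigma,\tau)=\mathcal{S}(G\times \Z_n,\mathbf{c})$ and first analyze its behaviour on a single "fibre" $G\times\{i\}$, then on the $\Z_n$-coordinate. Recall from the construction in Theorem~\ref{th:main} that $\sigma_{(0,0)}=\pi$ acts as $(b,j)\mapsto(b,j+1)$, so $\pi$ is a congruence-preserving bijection cyclically permuting the fibres; this lets us transport information between fibres. For part~(1), I would fix $i$ and set $H_i=\{a-b\mid (a,i)\asymp(b,i)\}$; since $\asymp$ restricted to $G\times\{i\}$ is an equivalence relation and translations by elements of $\dis{X}$ (which by Proposition~\ref{prop:dis00} realize exactly the translations of $G$ on $G\times\{0\}$, and analogously on each fibre) are compositions of the $\sigma_x$'s, compatibility of $\asymp$ with these translations forces each class on $G\times\{i\}$ to be a coset of a subgroup $H_i\le (G,+,0)$. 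Then, using $\pi$, which maps $G\times\{i\}$ bijectively onto $G\times\{i+1\}$ commuting with the $G$-translations, one gets $H_{i+1}=H_i$, so $H:=H_i$ is independent of $i$; this proves (1).

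For part~(2), I would define $m$ to be the smallest positive integer such that $(a,i)\asymp(b,i+m)$ holds for some $a,b\in G$ and some (equivalently, by (1) and the translation argument, every) $i$ — or $m=\infty$ if no such relation across distinct fibres exists. The set $\{j-i : (a,i)\asymp(b,j)\text{ for some }a,b\}$ is easily checked to be closed under addition and negation (using transitivity of $\asymp$ and the fact that $\pi$ shifts the second coordinate uniformly), hence a subgroup of $\Z_n$ (or of $\Z$ if $n=\infty$); so it equals $m\Z_n$, which gives both the divisibility of $n$ by $m$ (in the finite case) and the implication "$(a,i)\asymp(b,j)\Rightarrow i\equiv j\pmod m$".

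The remaining point is the constraint $c_i-c_{i+m}\in H$ when $m$ is finite. Here I would take any pair with $(a,i)\asymp(b,i+m)$ and apply a suitable element of $\dis{X}$, or more directly iterate $\sigma_x$: by formula~\eqref{sigmal}, $\sigma_{(a,0)}^m((b,j))=(b+\sum_{\ell=1}^m(c_{-j-\ell}-c_{-j-\ell}),j+m)$ — wait, more usefully, comparing $\sigma_{(0,i)}\sigma_{(0,0)}^{-1}=L_{(0,i)}$, which sends $(b,0)\mapsto(b+c_i,0)$, applied to both sides of a relation $(a,0)\asymp(a',m)$. Concretely, since $\asymp$ is $\pi$-invariant and $\pi^m$ shifts fibre $i$ to fibre $i+m$, while the $G$-coordinate is unchanged, the existence of a relation linking fibre $i$ to fibre $i+m$ together with compatibility with $L_{(0,i)}$-type maps forces the "defect" $c_i-c_{i+m}$ between how translations act on the two fibres to lie in $H$; this is the step where one must carefully track the shift in the constants $c_{i,j}=c_{i-j}-c_{-j}$ built into the $2$-reductive isotope. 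I expect this last bookkeeping with the indices of the $c_i$'s — making sure the fibre-shift by $m$ translates precisely into the difference $c_i-c_{i+m}\in H$ — to be the main obstacle; everything else is the routine "congruences of a module-like structure are cosets of submodules" argument.
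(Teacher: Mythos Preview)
Your plan for part~(1) and for the first two bullets of part~(2) is essentially the paper's proof: translations from $\dis{X}$ give the subgroup structure on each fibre, $\pi$ transports it between fibres, and the set of second-coordinate differences is a subgroup of~$\Z_n$.

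The gap is in the final bullet, showing $c_i-c_{i+m}\in H$. Your proposed mechanism---applying $L_{(0,i)}$ \emph{to both sides} of a relation $(a,0)\asymp(a',m)$ and reading off a ``defect''---uses only the argument-side of the congruence property, namely that each $\gamma\in\mathcal G(X)$ permutes the $\asymp$-classes. This is not enough. First, the defect you name is not the one that appears: $L_{(0,i)}$ translates fibre~$0$ by $c_{i,0}=c_i$ but fibre~$m$ by $c_{i,m}=c_{i-m}-c_{-m}$, so the discrepancy you obtain is $c_i-c_{i-m}+c_{-m}$, not $c_i-c_{i+m}$. Second, and more seriously, argument-side compatibility genuinely cannot force $c_i-c_{i+m}\in H$. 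For instance, with $G=\Z_2$, $n=4$, $\mathbf c=(0,1,1,0)$, the equivalence relation generated by $(a,i)\asymp(a,i+2)$ is $\mathcal G(X)$-invariant (so $H=\{0\}$, $m=2$), yet $c_0-c_2=1\notin H$; one checks directly that this relation is \emph{not} a congruence because $\sigma_{(0,0)}((0,0))=(0,1)$ while $\sigma_{(0,2)}((0,0))=(1,1)$.

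What is missing is the \emph{subscript}-side of the congruence axiom: if $x\asymp y$ then $\sigma_x(z)\asymp\sigma_y(z)$ for every~$z$. The paper exploits this in one line. From $(0,0)\asymp(d,m)$ and $\pi$-invariance one gets $(0,i)\asymp(d,m+i)$; feeding this into the subscript gives $L_{(0,i)}(w)=\sigma_{(0,i)}\sigma_{(0,0)}^{-1}(w)\asymp\sigma_{(d,m+i)}\sigma_{(0,0)}^{-1}(w)=L_{(d,m+i)}(w)$ for all~$w$. Taking $w=(0,0)$ yields $(c_i,0)\asymp(c_{m+i},0)$, hence $c_i-c_{i+m}\in H$. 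So the step you flagged as ``bookkeeping with indices'' in fact needs a different use of the congruence condition than the one you proposed.
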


\begin{proof}
Recall that, for $(a,i),(b,j)\in G\times \Z_n$, $L_{(a,i)}((b,j))=(b+c_{i-j}-c_{-j},j)$, $\pi((b,j))=(b,j+1)$ and $\sigma_{(a,i)}((b,j))=(b+c_{i-j-1}-c_{-j-1},j+1)$. 

(1) By definition, the group $(G,+,0)$ is generated by the set $\{c_{i}\colon i\in \Z_n\}$. It simply means that, for each $g\in G$ and $i\in\Z_n$,
there exists a permutation $\alpha\in\dis{G\times \Z_n}$ such that, for every $a\in G$,
\begin{align}\label{alpha} 
(a+g,i)=\alpha((a,i)).
\end{align} 
Hence, for each $a,b\in G$, $i\in\Z_n$ and $g\in G$,
\[
(a,i)\asymp(b,i)\quad \Rightarrow \quad\alpha((a,i))\asymp\alpha((b,i))\quad \Rightarrow \quad (a+g,i)\asymp(b+g,i).
\]
Analogously, using $\pi^k$ we prove, for $a,b\in G$ and $i,j,k\in \Z_n$,
\begin{equation}\label{i+k}
(a,i)\asymp(b,j)\quad \Rightarrow \quad (a,i+k)\asymp(b,j+k).
\end{equation}
Let $(a,i),(b,i),(a',i),(b',i)\in G\times \Z_n$ be such that $
(a,i)\asymp(b,i)\;\; {\rm and}\;\;(a',i)\asymp(b',i)$. 
Hence
\[
(a+a',i)\asymp(b+a',i)\asymp(b+b',i).
\]
Therefore, for each $i\in \Z_n$, 
$H_i=\{a\in G\mid (a,i)\asymp (0,i)\}$ is a subgroup of~$(G,+,0)$ and 
\begin{multline*}
a,a'\in H_i \quad \Leftrightarrow\quad (a,i)\asymp (a',i)\quad \Leftrightarrow\quad (0,i)\asymp (a-a',i)
\\ \Leftrightarrow\quad (0,0)\asymp (a-a',0)\quad \Leftrightarrow\quad a-a'\in H_0.
\end{multline*} 
In particular, for each $i\in \Z_n$, $H_i=H_0$.



(2) If $(a,i)\asymp(a',i')$ always implies $i=i'$ then we can take $m=n$ and the proof is finished. Suppose hence, for the rest of the proof, that there
exist $(a,i),(a',i')\in G\times\Z_n$ such that $(a,i)
\asymp(a',i')$ and $i\neq i'$.

Let $m$ be the smallest positive integer such that there exists $d\in G$ with
$(0,0)\asymp(d,m)$. 
Let $\gamma\in \dis{X}$ be such that $\gamma((0,m))=(d,m)$. 
Then $(0,0)\asymp \gamma\pi^m((0,0))$ and, by induction, $(0,0)\asymp
(\gamma\pi^m)^k((0,0))$, for any~$k\in\Z$.
If~$n$ is finite then we can set $k=\lceil\frac nm\rceil$ and we see that there exists some $b\in G$ such that $(0,0)\asymp (b,km)=(b,km-n)$. Since $0\leq km-n<m$ and~$m$ is minimal, necessarily $m$ divides~$n$.

Let again $(a,i)\asymp(a',i')$ for some $a,a'\in G$ and $i,i'\in \Z_n$. 
Let~$\beta\in\dis{X}$ be such that $\beta((0,0))=(a,0)$.
Then by \eqref{i+k}, 
\[(a,i)\asymp(a',i') \quad \Leftrightarrow \quad (a,0)\asymp(a',i'-i)
\quad \Leftrightarrow \quad (0,0)\asymp\beta^{-1}((a',i'-i)).
\]
Hence $i'\equiv i\pmod m$.

Finally, for each~$i\in\Z_n$,
\begin{multline}\label{cc}
(0,0)\asymp(d,m)
\quad \Leftrightarrow \quad (0,i)\asymp (d,m+i)
\quad \Rightarrow \quad L_{(0,i)}((0,0))\asymp L_{(d,m+i)}((0,0))\\
\quad \Leftrightarrow \quad (c_{i},0) \asymp (c_{m+i},0)
\quad \Leftrightarrow \quad c_i- c_{m+i}\in {H_0}.\qedhere
\end{multline}
\end{proof}

\begin{lemma}\label{lem:commutator}
 Let $(X,\sigma,\tau)$, $\asymp$, $H$ and~$m$ be as in Lemma~\ref{lem:5.3}. 
Let~$m$ be finite and
 let $\Phi\colon X\to X/\mathord{\asymp}$ be the natural projection.
 Then $[\pi^m,\dis{X}]\subseteq \Ker\Phi_{D}$.
\end{lemma}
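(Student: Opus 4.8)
The goal is to show that for every $\gamma\in\dis{X}$ the commutator $[\pi^m,\gamma]=\pi^{-m}\gamma^{-1}\pi^m\gamma$ lies in $\Ker\Phi_D$, i.e. that this element of $\dis{X}$ fixes every $\asymp$-class setwise, or equivalently that it acts trivially on $X/\mathord{\asymp}$. My plan is to test the commutator on a single representative point, say $(0,0)$, and then use the transitivity machinery of Proposition~\ref{prop:3} and the structural description of $\asymp$ from Lemma~\ref{lem:5.3} to conclude that it acts trivially everywhere. Concretely, since $\dis{X}$ is abelian and normal in $\mathcal{G}(X)$, by Equation~\eqref{piLxpi} (written here with $\widetilde 0=(0,0)$) we have $\pi^m L_{(0,i)}\pi^{-m}=L_{\widetilde m}^{-1}L_{\pi^m((0,i))}$; hence for a generator $L_{(0,i)}$ of $\dis{X}$ the conjugate $\pi^m L_{(0,i)}\pi^{-m}$ differs from $L_{(0,i)}$ only by the fixed factor $L_{\widetilde m}^{-1}L_{\widetilde i}$-type correction, and one computes that $[\pi^m,L_{(0,i)}]$ is, up to sign conventions, translation by $c_{i}-c_{i+m}$ in the $G$-coordinate (using $c_{j,0}=c_j$ and the explicit formula for $L_{(0,i)}$).

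The key input is the last bullet of Lemma~\ref{lem:5.3}(2): since $m$ is finite, $c_i-c_{i+m}\in H$ for all $i\in\Z_n$. Combined with part~(1) of that lemma — which says $(a,i)\asymp(b,i)$ exactly when $a-b\in H$ — this gives that translation by $c_i-c_{i+m}$ on the slice $G\times\{i\}$ maps each point into its own $\asymp$-class. So $[\pi^m,L_{(0,i)}]$ is in $\Ker\Phi$ as a permutation; since $\{L_{(0,i)}\mid i\in\Z_n\}$ generates $\dis{X}$ and $[\pi^m,-]$ is a homomorphism from the abelian group $\dis{X}$ into $\dis{X}$ (here I would note $[\pi^m,\gamma\gamma']=[\pi^m,\gamma]\cdot\gamma[\pi^m,\gamma']\gamma^{-1}=[\pi^m,\gamma][\pi^m,\gamma']$ by commutativity of $\dis{X}$), it suffices to check the claim on these generators. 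Finally, because $\Ker\Phi_D=\dis{X}\cap\Ker\Phi$ is a subgroup of $\dis{X}$, closure under products finishes the argument.

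The one point that needs care — and which I expect to be the main technical obstacle — is verifying that an element of $\dis{X}$ which sends every point of every slice $G\times\{i\}$ into its own $\asymp$-class actually lies in $\Ker\Phi_D$, i.e. that its image under $\Phi_D$ is the identity of $\dis{X/\mathord{\asymp}}$. This is really the statement that the induced permutation on $X/\mathord{\asymp}$ is the identity, which follows because an element of $\dis{X}$ is determined by its action on $X$ (it is a permutation of $X$) and its induced action on the quotient is determined by that action; so if it moves no $\asymp$-class then $\Phi_D$ of it is $\id$. I would spell this out using the relation $\Phi(\chi(x))=\Phi_{\mathcal G}(\chi)\Phi(x)$ recorded just before Corollary~\ref{cor:semi-regular}: for $\chi=[\pi^m,\gamma]$ and arbitrary $x$, $\Phi_{\mathcal G}(\chi)\Phi(x)=\Phi(\chi(x))=\Phi(x)$ since $\chi(x)\asymp x$; as $\Phi$ is onto, $\Phi_{\mathcal G}(\chi)=\id$, hence $\Phi_D(\chi)=\id$, i.e. $\chi\in\Ker\Phi_D$. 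The remaining details — the explicit evaluation of $[\pi^m,L_{(0,i)}]$ on $(b,j)$ via \eqref{sigmal}/\eqref{piLxpi} and the bookkeeping with the constants $c_{i,j}=c_{i-j}-c_{-j}$ — are routine.
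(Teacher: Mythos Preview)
Your approach is essentially the same as the paper's: reduce to the generators $L_{(0,i)}$ of $\dis{X}$, compute the commutator $[\pi^m,L_{(0,i)}]$ explicitly, verify that it translates the $G$-coordinate by an element of $H$ so that it fixes every $\asymp$-class, and then use the relation $\Phi(\chi(x))=\Phi_{\mathcal G}(\chi)\Phi(x)$ together with surjectivity of $\Phi$ to conclude $\Phi_D(\chi)=\id$. The paper carries this out by a direct four-line evaluation of $\pi^{-m}L_{(0,i)}^{-1}\pi^mL_{(0,i)}((b,j))$ rather than via \eqref{piLxpi}, but either route leads to the same verification.

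One small correction: your claim that $[\pi^m,L_{(0,i)}]$ is ``translation by $c_i-c_{i+m}$ in the $G$-coordinate'' is not quite accurate. The actual translation on the slice $G\times\{j\}$ is by
\[
c_{i-j}-c_{i-j-m}+c_{-j-m}-c_{-j},
\]
which depends on $j$. This is still in $H$, since it is a sum of two differences of the form $c_k-c_{k+m}$, each lying in $H$ by Lemma~\ref{lem:5.3}(2); so the argument goes through unchanged once you write down the correct expression. Your own remark that the explicit evaluation and the bookkeeping with $c_{i,j}=c_{i-j}-c_{-j}$ are routine is exactly right, and doing that computation carefully would have caught this.
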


\begin{proof}
For all $b\in G$ and $i,j \in\Z_n$ we have
\begin{multline*}
\pi^{-m}L_{(0,i)}^{-1}\pi^mL_{(0,i)}((b,j))=
\pi^{-m}L_{(0,i)}^{-1}\pi^{m}((b+c_{i-j}-c_{-j},j))
=\\
\pi^{-m}L_{(0,i)}^{-1}((b+c_{i-j}-c_{-j},j+m))
 =
 \pi^{-m}((b+c_{i-j}-c_{-j}-(c_{i-j-m}-c_{-j-m}),j+m))=\\
 (b+c_{i-j}-c_{i-j-m}+c_{-j-m}-c_{-j},j)=(b+h,j)
\end{multline*}
for some $h\in H$, according to Lemma~\ref{lem:5.3} (2). Hence,
by Lemma~\ref{lem:5.3} (1),
\[(b,j)\asymp(b+h,j)=[\pi^m,L_{(0,i)}]((b,j)).
\]
This gives 
$\Phi((b,j))=\Phi_{\mathcal{G}}([\pi^m,L_{(0,i)}]) \Phi((b,j))$ 
and therefore $[\pi^m,L_{(0,i)}]\in \Ker(\Phi_{\mathcal{G}})$.
Since $\dis{X}$ is a normal subgroup of~$\mathcal{G}(X)$, we obtain
$[\pi^m,L_{(0,i)}]\in\dis{X}$. Because $\dis{X}=\langle L_{(0,i)}\mid i\in \Z_n\rangle$, this completes the proof.
\end{proof}


\begin{proposition}\label{prop:image}
 An equivalence relation $\asymp$ is a congruence of the solution $(X,\sigma,\tau)=\mathcal{S}(G\times \Z_n,\mathbf{c})$ if and only if there exist a~subgroup~$H$ of~$(G+,0)$,  $m\in \mathbb{N}\cup\{\infty\}$ and $r\in G$ such that:
 \begin{enumerate}
  \item [i)] $m$ divides~$n$, if $n$ is finite;
  \item [ii)] if $m$ is finite then $c_i-c_{i+m}\in H$, for each~$i\in \Z_n$;
  \item [iii)] if $n$ and $m$ are finite then $\frac nm\cdot r\in H$;
  \item [iv)] for all $a,a'\in G$ and $i,i'\in\Z_n$
  \begin{multline}\label{def:con}
  (a,i)\asymp(a',i')\quad \Leftrightarrow\quad i-i'\equiv 0\pmod m\ \text{and}\ 
  \begin{cases}
  a'-a\equiv\frac{i-i'}m\cdot r\pmod H &\text{if }
  m<\infty\\
  a'-a\in H & \text{if }m=\infty.
  \end{cases}
  \end{multline}
 \end{enumerate}
\end{proposition}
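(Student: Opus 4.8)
The plan is to prove both implications of Proposition~\ref{prop:image}, relying heavily on Lemma~\ref{lem:5.3} and Lemma~\ref{lem:commutator} which have already done most of the structural work.

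For the direction ``$\Rightarrow$'', suppose $\asymp$ is a congruence. Lemma~\ref{lem:5.3}(1) gives the subgroup $H=H_0$ of $(G,+,0)$ describing fibres over a fixed second coordinate, and Lemma~\ref{lem:5.3}(2) gives $m\in\N\cup\{\infty\}$ dividing $n$ (when $n$ is finite), the congruence restriction $i-i'\equiv 0\pmod m$, and condition (ii), namely $c_i-c_{i+m}\in H$. It remains to produce $r\in G$ and establish the precise shape~\eqref{def:con}. If $m=\infty$, then $(a,i)\asymp(a',i')$ forces $i=i'$ (since no two distinct second coordinates are related), and then Lemma~\ref{lem:5.3}(1) says $a'-a\in H$; so take $r=0$ and we are done. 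If $m<\infty$, pick $d\in G$ with $(0,0)\asymp(d,m)$ as in the proof of Lemma~\ref{lem:5.3}(2), and set $r=d$. I must check: first, that $(0,0)\asymp(d,m)$ propagates to $(a,i)\asymp(a+d,i+m)$ for all $a,i$ — this follows by translating with a suitable $\alpha\in\dis X$ (using \eqref{alpha}) and with $\pi^i$ (using \eqref{i+k}) exactly as in Lemma~\ref{lem:5.3}; second, iterating this $k$ times gives $(0,0)\asymp(k\cdot d,km)$, so if $i-i'=km$ then $a'-a\equiv k\cdot d=\frac{i-i'}{m}\cdot d\pmod H$, which is one direction of the equivalence in~\eqref{def:con}. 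The converse direction of~\eqref{def:con} (that this condition is sufficient for $(a,i)\asymp(a',i')$) follows because $H$-translates and the relation $(0,0)\asymp(d,m)$ together generate all the listed pairs. Finally, condition (iii): if $n$ and $m$ are both finite, apply the $k=\frac nm$-fold iteration to get $(0,0)\asymp(\frac nm\cdot d,n)=(\frac nm\cdot d,0)$, hence $\frac nm\cdot r=\frac nm\cdot d\in H_0=H$.

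For the direction ``$\Leftarrow$'', suppose $H$, $m$, $r$ satisfy (i)--(iii), and define $\asymp$ by~\eqref{def:con}. First check this is an equivalence relation: reflexivity is clear; symmetry uses that $H$ is a subgroup and that $-(i-i')/m\cdot r\equiv -(a'-a)\pmod H$; transitivity adds the exponents $(i-i')/m$ and $(i'-i'')/m$, using (iii) to handle the wrap-around modulo $n$ when the sum of exponents exceeds $n/m$ — this is precisely where $\frac nm\cdot r\in H$ is needed, since $(a,0)\asymp(a',n)$ should be consistent with $(a,0)\asymp(a',0)$. Then verify the congruence property~\eqref{congr}: it suffices to check $\sigma_{(a,i)}^{\pm1}$ because $\tau$ is determined by $\sigma$ in a solution, or one computes $\tau$ directly from the formula in Theorem~\ref{th:main}. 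Using $\sigma_{(a,i)}((b,j))=(b+c_{i-j-1}-c_{-j-1},j+1)$, if $(a,i)\asymp(\bar a,\bar\imath)$ and $(b,j)\asymp(\bar b,\bar\jmath)$ with $i-\bar\imath=km$, $j-\bar\jmath=\ell m$, $\bar a-a\equiv k r$, $\bar b-b\equiv \ell r\pmod H$, then I must show the first coordinates of $\sigma_{(a,i)}((b,j))$ and $\sigma_{(\bar a,\bar\imath)}((\bar b,\bar\jmath))$ differ by $\equiv \ell r\pmod H$ and the second coordinates differ by $\ell m$. The second-coordinate part is immediate. For the first coordinate the difference is $(\bar b-b) + (c_{\bar\imath-\bar\jmath-1}-c_{i-j-1}) - (c_{-\bar\jmath-1}-c_{-j-1})$; using condition (ii) repeatedly ($c_t-c_{t+m}\in H$ telescopes to $c_t-c_{t+km}\in H$ for any $k$) the two bracketed differences lie in $H$, leaving $\bar b-b\equiv\ell r\pmod H$, as required. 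The case $\varepsilon=-1$ is handled identically using the formula for $\sigma_{(a,i)}^{-1}$ (or for $\tau$), again invoking (ii).

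The main obstacle I anticipate is bookkeeping in the ``$\Leftarrow$'' direction: checking transitivity of $\asymp$ and the congruence property~\eqref{congr} both require carefully tracking the coset representatives modulo $H$ together with the ``slope'' $r$ across the wrap-around $n\to 0$ in $\Z_n$, and it is exactly at this wrap-around that condition (iii) is consumed — getting the indexing right there is the delicate point, whereas everything else is a routine translation-of-congruence argument of the kind already carried out in Lemma~\ref{lem:5.3}. A secondary point worth stating carefully is that one genuinely only needs to verify~\eqref{congr} for $\sigma$ (equivalently for $\sigma^{-1}$), since in any solution $\tau$ is recovered from $\sigma$ and $r=(\sigma,\tau)$, so a $\sigma$-congruence that is also a $\sigma^{-1}$-congruence is automatically a congruence of the solution; alternatively one checks $\tau$ directly from its explicit formula, which is no harder.
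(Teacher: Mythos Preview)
Your overall strategy is sound and the ``$\Leftarrow$'' direction matches the paper's proof closely. There is, however, one genuine gap in your ``$\Rightarrow$'' argument that you should be aware of.

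You claim that $(0,0)\asymp(d,m)$ propagates to $(a,i)\asymp(a+d,i+m)$ by translating with $\alpha\in\dis X$ via~\eqref{alpha}. But~\eqref{alpha} only guarantees, for a \emph{fixed} second coordinate~$i$, an element $\alpha\in\dis X$ acting as translation by a given~$g$ on the fibre $G\times\{i\}$. That same~$\alpha$ need not act as translation by~$g$ on the fibre $G\times\{i+m\}$: writing $\alpha=\prod L_{(0,j_\ell)}^{p_\ell}$, its shift on $G\times\{i\}$ is $\sum p_\ell(c_{j_\ell-i}-c_{-i})$ whereas on $G\times\{i+m\}$ it is $\sum p_\ell(c_{j_\ell-i-m}-c_{-i-m})$. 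These differ by an element of~$H$, thanks to condition~(ii), but this has to be said; once it is, your iteration $(0,0)\asymp(k d,km)$ and hence~(iii) and~(iv) go through. The paper carries out exactly this computation explicitly (its claim that $(a,0)\asymp(b,k)$ with $m\mid k$ implies $(0,0)\asymp(b-a,k)$), and then packages the iteration via the commutator $\iota=[\gamma,\pi^m]$ together with Lemma~\ref{lem:commutator}; your direct-iteration route is a legitimate and arguably cleaner alternative that avoids Lemma~\ref{lem:commutator} entirely, but it still needs that modulo-$H$ observation.

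One minor remark: the sign convention in~\eqref{def:con} (whether the coefficient of~$r$ is $(i-i')/m$ or $(i'-i)/m$) is immaterial, since replacing~$r$ by~$-r$ preserves (i)--(iii); so do not be troubled if your derivation produces the opposite sign from the statement.
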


\begin{proof}
Let $\asymp$ be a congruence of the solution $(X,\sigma,\tau)$.
The items (i) and (ii) were proved in~Lemma~\ref{lem:5.3}.
If~$n=m$ then (iii) and (iv) also follow from Lemma~\ref{lem:5.3}.
Suppose hence~$m<n$.

Recall that, if $n$ is finite, $m$ is the smallest positive integer such that there exists $d\in G$ with
$(0,0)\asymp(d,m)$. Let now~$\gamma\in\dis{X}$
be such that $(0,0)\asymp\pi^m\gamma(0,0)$ and denote by~$r$ the element from~$G$ such that $\gamma((0,0))=(r,0)$.
Let $\iota=[\gamma,\pi^m]$. By Lemma~\ref{lem:commutator},  $\iota((0,0))\asymp (0,0)$. By Lemma~\ref{lem:5.3}, there exists $h\in H$ such that
$\iota((0,0))=(h,0)$.

Then
\[\textstyle (\pi^m\gamma)^{n/m}((0,0))=\pi^n\gamma^{n/m}\iota^{n/m \choose 2}((0,0))
 =(\frac nm\cdot r+{n/m \choose 2}\cdot h,0).
\]
Since $\pi^m\gamma((0,0))\asymp(0,0)$, then by~Lemma~\ref{lem:5.3}
we obtain $r\cdot \frac nm\in H$.

Further, we will show that for $a,b\in G$ and $k\in \Z_n$ with $m\mid k$, if $(a,0)\asymp(b,k)$ then 
$(0,0)\asymp(b-a,k)$. Let $\chi\in \dis{X}$ be such that $\chi((a,0))=(0,0)$. Assume $\chi=L_{(0,i_1)}^{p_1}\ldots L_{(0,i_s)}^{p_s}$ for some numbers $i_1,\ldots,i_s\in \Z_n$ and $p_1,\ldots,p_s\in \Z$. This implies 
\[
\chi((a,0))=L_{(0,i_1)}^{p_1}\ldots L_{(0,i_s)}^{p_s}((a,0))=(a+\sum_{\ell=1}^{s} p_\ell\cdot c_{i_\ell},0)=(0,0)
\]
and therefore $\sum p_{\ell}\cdot c_{i_{\ell}}=-a$. Hence, 
\begin{align*}
\chi((b,k))=L_{(0,i_1)}^{p_1}\ldots L_{(0,i_s)}^{p_s}((b,k))=(b+\sum_{\ell=1}^{s} p_\ell\cdot c_{{i_\ell},k},k)=
(b+\sum_{\ell=1}^{s} p_\ell\cdot (c_{i_\ell-k} - c_{-k}),k).
\end{align*}
Since $m\mid k$ then, by Lemma~\ref{lem:5.3}, $c_{i_\ell}-c_{i_\ell-k}\in H$ and $c_{-k}\in H$. So, there is $\overline{h}\in H$ such that 
\[
\chi((b,k))=(b+\sum_{\ell=1}^{s} p_\ell\cdot c_{i_\ell}+\overline{h},k)=(b-a+\overline{h},k)
\asymp(b-a,k).
\]
Now, let $a,a'\in G$ and $i\neq i'\in\Z_n$ be such that $(a,i)\asymp(a',i')$. Then, $m\mid (i'-i)$ by Lemma~\ref{lem:5.3} and, by \eqref{i+k}, 
$(a,0)\asymp(a',i'-i)$. Hence, $(0,0)\asymp (a'-a,i'-i)$. Now
\[ (0,0)\asymp\textstyle (\pi^m\gamma)^{\frac{i'-i}m} ((0,0))
= \pi^{i'-i}\gamma^{\frac{i'-i}m}\iota^{(i'-i)/m \choose 2} ((0,0)) = (\frac{i'-i}m\cdot r+{(i'-i)/m \choose 2}\cdot h,i'-i).
\]
 Hence, once again by Lemma~\ref{lem:5.3}
 \[\textstyle a'-a-\frac{i'-i}m\cdot r+{(i'-i)/m \choose 2}\cdot h \in H\]
 which is equivalent to
 \[\textstyle a'-a+\frac{i'-i}m\cdot r\equiv 0\pmod H.\]
 
Conversely, we will show that an equivalence relation $\asymp$ defined by \eqref{def:con} is a congruence of the solution $(X,\sigma, \tau)$. Suppose $m<\infty$.

Let $(a,i)\asymp(a',i')$ and $(b,j)\asymp(b',j')$. By definition, $(j'+1)\equiv (j+1) \pmod m$ and  $b-b'+\tfrac{j'-j}m\cdot r\equiv 0\pmod H $. Further, by Lemma~\ref{lem:5.3},
\begin{multline*}
b-b'+\tfrac{j'-j}m\cdot r=
b+c_{i-j-1}-c_{-j-1}
  -b'-c_{i-j-1}+c_{-j-1}+\tfrac{j'-j}m\cdot r\equiv\\
b+c_{i-j-1}-c_{-j-1}-b'-c_{i'-j'-1}+c_{-j'-1}+\tfrac{j'-j}m\cdot r=\\
  b+c_{i-j-1}-c_{-j-1}-b'+c_{i'-j'-1}-c_{-j'-1}+\tfrac{(j'+1)-(j+1)}m\cdot r\equiv
0\pmod H. 
\end{multline*}
Since
 \begin{align*}
   \sigma_{(a,i)}((b,j)) &= (b+c_{i-j-1}-c_{-j-1},j+1), \quad {\rm and}\\
   \sigma_{(a',i')}((b',j')) &= (b'+c_{i'-j'-1}-c_{-j'-1},j'+1),
 \end{align*}
this finishes the proof for $m<\infty$. But the case $m=\infty$ is similar.
\qedhere
\end{proof}

Let $\theta(m,H,r)$ denote the congruence relation of a solution $\mathcal{S}(G\times \Z_n,\mathbf{c})$ specified  by the triple $m$, $H$ and $r$ from Proposition \ref{prop:image}. By \eqref{def:con}, $\theta(m,G,r)=\theta(m,G,s)$, whenever $r\equiv s\pmod H$. In Theorem \ref{thm:congr} we will show that
this is the only possibility how to obtain isomorphic solutions.



\begin{example}\label{exm:contr}
Let us consider the $12$-element solution $\mathcal{S}(\Z_2\times\Z_6,\mathbf{c}=(0,1,1,0,1,1))$. Since, $c_i=c_{i+3}$, for each $i\in\Z_6$, by Proposition \ref{prop:image} there are seven different congruences of the solution: 
$\theta(6,\{0\},0)$, $\theta(3,\{0\},0)$,  $\theta(3,\{0\},1)$, $\theta(6,\Z_2,0)$, $\theta(3,\Z_2,0)$, $\theta(2,\Z_2,0)$ and $\theta(1,\Z_2,0)$

The congruence $\theta=\theta(3,\{0\},1)$ induces the quotient solution 
$(Y,\sigma, \tau)$, where $Y=\{\mathbf{0},\mathbf{1},\mathbf{2},\mathbf{3},\mathbf{4},\mathbf{5}\}$ is defined by:
    \begin{align*}
    \mathbf{0}=(0,0)/_{\theta}=(1,3)/_{\theta}& & 
    \mathbf{1}=(1,0)/_{\theta}=(0,3)/_{\theta}& & 
    \mathbf{2}=(0,1)/_{\theta}=(1,4)/_{\theta},\\
    \mathbf{3}=(1,1)/_{\theta}=(0,4)/_{\theta}&& 
    \mathbf{4}=(0,2)/_{\theta}=(1,5)/_{\theta}& & 
    \mathbf{5}=(1,2)/_{\theta}=(0,5)/_{\theta}.
  \end{align*}
  It is then possible to compute the permutations:
  \begin{align*}
    \sigma_{\mathbf{0}}=\sigma_{\mathbf{1}}&=(\mathbf{0}\mathbf{2}\mathbf{4}\mathbf{1}\mathbf{3}\mathbf{5});\quad
    \sigma_{\mathbf{2}}=\sigma_{\mathbf{3}}=(\mathbf{0}\mathbf{3}\mathbf{5}\mathbf{1}\mathbf{2}\mathbf{4});\quad
    \sigma_{\mathbf{4}}=\sigma_{\mathbf{5}}=(\mathbf{0}\mathbf{2}\mathbf{5}\mathbf{1}\mathbf{3}\mathbf{4}).
  \end{align*}
 Hence $\mathcal{G}(Y)$ is a non-abelian group of order~24, $\dis{Y}=\{\mathrm{id},(\mathbf{2}\mathbf{3})(\mathbf{4}\mathbf{5}),(\mathbf{0}\mathbf{1})(\mathbf{4}\mathbf{5}),(\mathbf{2}\mathbf{3})(\mathbf{0}\mathbf{1})\}$ and $\dis{Y}\cap\langle\sigma_{\mathbf{0}}\rangle=\{\id\}$ with $|\dis{Y}_{\mathbf{0}}|=2$. But the solution $(Y,\sigma,\tau)$ is not isomorphic to a solution $\mathcal{S}(G\times\Z_n,\mathbf{c})$ for any $(G,+,0)$, $n$ and $\mathbf{c}$.
 Indeed, 
 by Proposition~\ref{prop:dis00}, such a group $(G,+,0)$ 
 would be a two-element group and,
 by Proposition \ref{prop:lengthcycles}, $n$ would be equal to~$6$
 and we would obtain a 12-element solution.
 
 \end{example}

\begin{example}
By Proposition \ref{prop:homfin}, the solution $(\Z_{2^{m}},\sigma,\tau)$ from Example \ref{exm:Rump} Case 1 is isomorphic to the solution 
 $\mathcal{S}(2\Z_{2^{m}}\times \Z_2,\mathbf{c}=(0,-2))$. The isomorphism $\Phi\colon 
2\Z_{2^{m}}\times \Z_2\to \Z_{2^{m}}$ is given by $(s,0)\mapsto s$ and $(s,1)\mapsto s+1$, and clearly $\kker\Phi=\theta(2,\{0\},0)$.

On the other hand, the solution $(\Z_{2^m},\sigma,\tau)$ from Case 2 is the homomorphic image of the solution $\mathcal{S}(2\Z_{2^{m}}\times \Z_4,\mathbf{c}=(0,2^{m-1}-2,0,2^{m-1}-2))$. The homomorphism $\Phi\colon 
2\Z_{2^{m}}\times \Z_4\to \Z_{2^{m}}$ is given by $(s,0)\mapsto s$, $(s,1)\mapsto s+1+2^{m-1}$, $(s,2)\mapsto s+2^{m-1}$ and $(s,3)\mapsto s+1$. In this case, $\kker\Phi=\theta(2,\{0\},2^{m-1})$.
\end{example}

By Proposition \ref{prop:image}, for each indecomposable solution of multipermutation level~$2$ there are a solution 
$(X,\sigma,\tau)=\mathcal{S}(G\times \Z_n,\mathbf{c})$, a subgroup $H$ of the group $(G,+,0)$, a number $m$, an element $r\in G$ and a congruence $\theta=\theta(m,H,r)$ determined by \eqref{def:con} such that the solution is isomorphic to the quotient solution of $\mathcal{S}(G\times \Z_n,\mathbf{c})$ by the relation $\theta$. Hence, 
each indecomposable solution of multipermutation level~$2$ is of the form $(X,\sigma,\tau)/\theta=(X/\theta,\obrazS{\sigma}{\theta},\obrazS{\tau}{\theta})$. 

Let $Y=X/\theta$. Clearly, for each $y\in Y$, there is $(a,i)\in G\times \Z_n$ such that $y=(a,i)/\theta$. In particular, for $y_1=(a_1,i_1)/\theta$ and $ y_2=(a_2,i_2)/\theta\in Y$, the equality $y_1=y_2$ implies $i_1-i_2\equiv 0\pmod m$ and, for $i\in \mathbb{N}$, the element $\widetilde{i}\in Y$ corresponds to the congruence class $\pi^i(\widetilde{0})/\theta=(0,i)/\theta$.

In the sequel (until Proposition~\ref{prop:Auttrans}) we suppose the following situation: we have $(Y,\rho,\upsilon)$, an indecomposable
solution of multipermutation level~$2$ and we know, according to
Propositions~\ref{prop:homim} and~\ref{prop:homfin}, that there exist
a~solution $(X,\sigma,\tau)=\mathcal{S}(G\times \Z_n,\mathbf{c})$
and a~congruence~$\theta=\theta(m,H,r)$ such that
$(Y,\rho,\upsilon)\cong (X/\theta,\obrazS{\sigma}{\theta},\obrazS{\tau}{\theta})$.

As the first application of the characterization of congruences of a solution $\mathcal{S}(G\times \Z_n,\mathbf{c})$ we prove the analogy of
Proposition~\ref{prop:lengthcycles} for all indecomposable solutions.

\begin{proposition}
Let $(Y,\rho,\upsilon)$ be a finite indecomposable solution of multipermutation level~$2$. Each permutation $\rho_y$, for $y\in Y$, is composed of cycles of length~$\ell$.
\end{proposition}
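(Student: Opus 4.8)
The plan is to exploit the standing assumption that $(Y,\rho,\upsilon)$ is isomorphic to the quotient $\mathcal{S}(G\times\Z_n,\mathbf{c})/\theta$ with $\theta=\theta(m,H,r)$, so that we may identify $Y$ with the set of $\theta$-classes of $G\times\Z_n$ and write $\rho_{y}\bigl((b,j)/\theta\bigr)=\bigl(b+c_{i-j-1}-c_{-j-1},\,j+1\bigr)/\theta$ for any representative $(a,i)$ of $y$. Since $Y$ is finite, $n=o(\pi)$ is finite, and hence by Lemma~\ref{lem:5.3} the integer $m$ is a finite divisor of $n$ with $\tfrac nm\cdot r\in H$. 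Note that each $\rho_y$ raises the $\Z_n$-coordinate (which is well defined modulo $m$) by one.

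First I would compute $\rho_y^{\,m}$. By formula~\eqref{sigmal} for $\sigma_{(a,i)}^{\,m}$ together with the relation $(b,j+m)\asymp(b+r,j)$, which is an instance of~\eqref{def:con}, one gets
\[
\rho_y^{\,m}\bigl((b,j)/\theta\bigr)=\Bigl(b+r+\sum_{\ell=1}^{m}\bigl(c_{i-j-\ell}-c_{-j-\ell}\bigr),\ j\Bigr)\big/\theta.
\]
The key point is that the displayed sum lies in $H$: by item~(ii) of Proposition~\ref{prop:image} the map $k\mapsto c_k+H$ is $m$-periodic on $\Z$, so the sum $c_k+c_{k+1}+\dots+c_{k+m-1}$ of any $m$ consecutive values represents one and the same coset of $H$; applying this to the two index windows $\{-j-m,\dots,-j-1\}$ and $\{i-j-m,\dots,i-j-1\}$ yields $\sum_{\ell=1}^{m}(c_{i-j-\ell}-c_{-j-\ell})\in H$. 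Consequently $\rho_y^{\,m}$ does not depend on $y$ and equals the translation $\psi\colon(b,j)/\theta\mapsto(b+r,j)/\theta$. (This is essentially Lemma~\ref{lem:commutator}: the image of $\pi^m$ commutes, modulo the kernel, with the image of $\dis{X}$.)

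Next I would analyse $\psi$. From $\psi^{\,t}\bigl((b,j)/\theta\bigr)=(b+tr,j)/\theta$ and~\eqref{def:con} we get $\psi^{\,t}=\id$ exactly when $tr\in H$; let $t_0$ be the least such positive integer, which is finite because $\tfrac nm\cdot r\in H$. Thus $\psi$ is semiregular with every cycle of length exactly $t_0$. Now let $y'=(b,j)/\theta$ and suppose $\rho_y^{\,k}(y')=y'$. Comparing $\Z_n$-coordinates modulo $m$ forces $m\mid k$; writing $k=ms$ gives $\psi^{\,s}(y')=y'$, hence $t_0\mid s$, i.e.\ $mt_0\mid k$; and conversely $\rho_y^{\,mt_0}=\psi^{\,t_0}=\id$. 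Therefore every $\rho_y$-orbit has size exactly $\ell:=mt_0$, which by Proposition~\ref{prop:3}(2) is the common order of all the permutations $\rho_y$. This proves the proposition.

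The only nontrivial step is the cancellation $\sum_{\ell=1}^{m}(c_{i-j-\ell}-c_{-j-\ell})\in H$, that is, the $m$-periodicity argument above; once $\rho_y^{\,m}$ has been identified with the translation $\psi$, the remaining orbit-length bookkeeping is routine.
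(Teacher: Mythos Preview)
Your proof is correct and follows essentially the same route as the paper's own argument: both identify the cycle length as $\ell=mt_0$ where $t_0$ is the least positive integer with $t_0 r\in H$, using formula~\eqref{sigmal} and the periodicity $c_k-c_{k+m}\in H$ from Proposition~\ref{prop:image}(ii). The only organizational difference is that you first compute $\rho_y^{\,m}$ and recognize it as the representative-independent translation $\psi$, then determine the order of $\psi$, whereas the paper works directly with $\sigma_{(a,i)}^{\,\ell}$ for arbitrary $\ell$ and solves for the smallest $\ell$ making the class trivial; the underlying computation is identical.
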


\begin{proof}
We shall prove that $\ell=km$,
 where $k$ is the smallest
 positive integer such that~$k\cdot r\in H$.
 
 Let $(a,i),(b,j)\in G\times \Z_n$ and we shall compute the smallest $\ell$ such that $\sigma_{(a,i)}^{\ell}((b,j))/\theta= (b,j)/\theta$.
 By \eqref{sigmal},
 we have
 \[\sigma_{(a,i)}^{\ell}((b,j)) = (b+\sum_{j'=1}^{\ell} (c_{i-j-j'}-c_{-j-j'} )\ ,\ j+\ell)\]
 and therefore $m\mid \ell$. Moreover, since $c_{i'}-c_{i'+\ell}\in H$, for each $i'$, this implies that there exists $h\in H$ such that
 $\sum_{j'=1}^{\ell} c_{i-j-j'}-c_{-j-j'}=h$.
 Now from $\sigma_{(a,i)}^{\ell}((b,j))/\theta= (b,j)/\theta$ we obtain
 \[b+h-b+\tfrac{j-j-\ell}m \cdot r\in H\]
 which is equivalent to $\frac \ell m\cdot r\in H$. 
\end{proof}

In the sequel we shall show several properties
of the image determined by the parameters of the congruence.

\begin{proposition}\label{prop:510}
 Let $(Y,\rho,\upsilon)$ be an indecomposable solution of multipermutation level~$2$ and 
~$e\in Y$.
 Then $\dis{Y}/\dis{Y}_e\cong G/H$.
\end{proposition}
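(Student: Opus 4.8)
The plan is to compare $\dis{Y}$ with the displacement group of the covering solution $(X,\sigma,\tau)=\mathcal{S}(G\times\Z_n,\mathbf{c})$ via the natural projection $\Phi\colon X\to Y=X/\theta$, where $\theta=\theta(m,H,r)$. First I would reduce to the base point $\widetilde 0=(0,0)/\theta$. Since $(Y,\rho,\upsilon)$ is indecomposable, $\mathcal{G}(Y)$ acts transitively, and by Proposition~\ref{prop:normal} the subgroup $\dis{Y}$ is normal and abelian in $\mathcal{G}(Y)$; hence for any $g\in\mathcal{G}(Y)$ conjugation by $g$ carries $\dis{Y}$ onto itself and $\dis{Y}_e$ onto $\dis{Y}_{g(e)}$, so that $\dis{Y}/\dis{Y}_e\cong\dis{Y}/\dis{Y}_{g(e)}$ (these are genuine quotient groups because $\dis{Y}$ is abelian). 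Thus it suffices to prove $\dis{Y}/\dis{Y}_{\widetilde 0}\cong G/H$.

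Next I would observe that $\Phi_D$ maps $\dis{X}$ \emph{onto} $\dis{Y}$: indeed, $\Phi$ being onto, every generator $\rho_y$ of $\mathcal{G}(Y)$ equals $\rho_{\Phi(x)}=\Phi_{\mathcal{G}}(\sigma_x)$ for a suitable $x$, so $\Phi_{\mathcal{G}}$ is surjective and carries $\dis{X}=\langle\sigma_x\sigma_y^{-1}\mid x,y\in X\rangle$ onto $\dis{Y}=\langle\rho_y\rho_{y'}^{-1}\mid y,y'\in Y\rangle$. Composing with the quotient map I obtain an epimorphism
\[ q\colon \dis{X}\xrightarrow{\ \Phi_D\ }\dis{Y}\longrightarrow\dis{Y}/\dis{Y}_{\widetilde 0}, \]
and it remains to compute $\Ker q$. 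For $\gamma\in\dis{X}$ we have $\gamma\in\Ker q$ exactly when $\Phi_D(\gamma)$ fixes $\widetilde 0=\Phi((0,0))$, that is, when $\Phi(\gamma((0,0)))=\Phi((0,0))$, equivalently when $\gamma((0,0))\asymp(0,0)$.

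The only remaining computation is then routine. Every element of $\dis{X}=\langle L_{(0,i)}\mid i\in\Z_n\rangle$ preserves the second coordinate, so $\gamma((0,0))=(\Psi(\gamma),0)$, where $\Psi\colon\dis{X}\to G$ is the epimorphism of Proposition~\ref{prop:dis00}; and by the explicit description~\eqref{def:con} of $\theta$, specialised to $i=i'=0$, one gets $(\Psi(\gamma),0)\asymp(0,0)$ if and only if $\Psi(\gamma)\in H$, in both the finite and the infinite $m$ cases. Hence $\Ker q=\Psi^{-1}(H)$, and since $\Psi$ is onto we conclude
\[ \dis{Y}/\dis{Y}_{\widetilde 0}\;\cong\;\dis{X}/\Psi^{-1}(H)\;\cong\;G/H. \]
I do not expect a genuine obstacle; the points deserving care are the surjectivity of $\Phi_D$ onto $\dis{Y}$ (rather than merely mapping into it) and the identification of the abstract solution $(Y,\rho,\upsilon)$ with its model $X/\theta$, so that the congruence $\theta$ and its parameters $H$, $m$, $r$ may legitimately be invoked.
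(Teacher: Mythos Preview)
Your proof is correct and follows essentially the same route as the paper's: both reduce to $e=(0,0)/\theta$, push $\dis{X}$ forward onto $\dis{Y}$ via the natural projection, and identify the kernel of the resulting map to $\dis{Y}/\dis{Y}_{\widetilde 0}$ with $\Psi^{-1}(H)$ (equivalently, $H$ after identifying $\dis{X}/\dis{X}_{(0,0)}$ with $G$ via Proposition~\ref{prop:dis00}). Your justification of the WLOG step via conjugation is in fact more explicit than the paper's, and your surjectivity argument for $\Phi_D$ is the standard one the paper leaves implicit.
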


\begin{proof}
We can suppose, without loss of generality, that $e=(0,0)/\theta$. Clearly, each permutation $\delta\in \mathcal{G}(X)$ induces the permutation $\obrazS{\delta}{\theta}\in  \mathcal{G}(Y)$. In particular, $\obrazS{L}{\theta}_{(a,i)/\theta}((b,j)/\theta)=L_{(a,i)}((b,j))/\theta$ and, for each $\gamma\in \dis{X}_{(0,0)}$, $\obrazS{\gamma}{\theta}(e)=\obrazS{\gamma}{\theta}((0,0)/\theta)=\gamma((0,0))/\theta=(0,0)/\theta=e$. Hence, the mapping $\gamma\mapsto \obrazS{\gamma}{\theta}$ is a well defined homomorphism from $\dis{X}/\dis{X}_{(0,0)}$ onto $\dis{Y}/\dis{Y}_e$. 

By Remark \ref{prop:dis00} we know that  $\Psi\colon \dis{X}\to G$  defined by $\prod L_{(0,i)}^{p_i}\mapsto \sum p_i\cdot c_i$ is a well-defined epimorphism with $\Ker\Psi=\dis{X}_{(0,0)}$ and $G\cong \dis{X}/\dis{X}_{(0,0)}$. Hence there exists a homomorphism $\Phi$
 from~$(G,+,0)$ onto~$\dis{Y}/\dis{Y}_{e}$.
 Let us now compute $\Ker\Phi$. An element
$\prod \obrazS{L}{\theta}^{p_k}_{(a_k,i_k)/\theta}$ lies in $\dis{Y}_e$ if and only if $\prod L^{p_k}_{(a_k,i_k)}((0,0))=(h,0)$, for some~$h\in H$. Since $\prod L^{p_k}_{(a_k,i_k)}((0,0))=(\sum p_k\cdot c_{i_k},0)$,
 this is equivalent to $\sum p_k\cdot c_{i_k}\in H$
 and therefore also to $\Psi(\prod L^{p_k}_{(a_k,i_k)})\in H$. Hence $\Ker\Phi=H$, which finishes the proof.
\end{proof}

\begin{lemma}
 Let $(G,\cdot,1)$ be a group that acts on a set~$X$. Then $\langle \bigcup G_x\rangle$
 is a normal subgroup of~$(G,\cdot,1)$.
\end{lemma}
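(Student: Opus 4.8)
Let $(G,\cdot,1)$ act on a set $X$. Then $\langle \bigcup_{x\in X} G_x\rangle$ is a normal subgroup of $G$, where $G_x$ denotes the stabilizer of $x$.

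The plan is to use the standard conjugation formula for point stabilisers under a group action, namely $g\,G_x\,g^{-1}=G_{gx}$ for all $g\in G$ and $x\in X$, together with the observation that conjugation by any element of $G$ merely permutes the family of all stabilisers.

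First I would verify the conjugation formula in one line: for $g\in G$ and $x\in X$, an element $h$ fixes $x$ if and only if $ghg^{-1}$ fixes $gx$, whence $g\,G_x\,g^{-1}=G_{gx}$. Set $S=\bigcup_{x\in X}G_x$ and $N=\langle S\rangle$. Since conjugation by a fixed $g\in G$ is an automorphism of $G$, it carries the subgroup generated by $S$ onto the subgroup generated by $gSg^{-1}$; that is, $gNg^{-1}=\langle gSg^{-1}\rangle$.

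Next I would compute $gSg^{-1}=\bigcup_{x\in X}g\,G_x\,g^{-1}=\bigcup_{x\in X}G_{gx}$. Because the action of $g$ is a bijection of $X$, the map $x\mapsto gx$ is surjective, so $\{gx\mid x\in X\}=X$ and therefore $\bigcup_{x\in X}G_{gx}=\bigcup_{y\in X}G_y=S$. Hence $gNg^{-1}=\langle S\rangle=N$ for every $g\in G$, which is precisely the normality of $N$ in $(G,\cdot,1)$. There is essentially no obstacle; the only point needing (entirely routine) care is the identity $\langle gSg^{-1}\rangle=g\langle S\rangle g^{-1}$, which holds because an inner automorphism sends a generating set of a subgroup to a generating set of its image.
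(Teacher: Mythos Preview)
Your proof is correct and follows the same approach as the paper: both rely on the identity $h\,G_x\,h^{-1}=G_{h(x)}$ to see that conjugation permutes the family of point stabilisers, whence the subgroup they generate is normal. The paper merely records the one-line verification $hgh^{-1}(h(x))=h(x)$ and leaves the remaining (routine) deductions implicit, whereas you spell them out.
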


\begin{proof}
 Let $g\in G_x$, for some~$x\in X$, and
 let $h\in G$. Then $hgh^{-1}(h(x))=hg(x)=h(x)$ and therefore $hgh^{-1}\in G_{h(x)}$.
\end{proof}

\begin{lemma}
Let 
$(Y,\rho,\upsilon)$ be an indecomposable solution of multipermutation level~$2$ and let~$e\in Y$. Then
 $\mathcal{G}(Y)_e\dis{Y}=\langle \bigcup \mathcal{G}(Y)_y\rangle\dis{Y}$
 and it is a normal subgroup of~$\mathcal{G}(Y)$.
\end{lemma}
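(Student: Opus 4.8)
The statement splits into two assertions: first, that the two subgroups $\mathcal{G}(Y)_e\dis{Y}$ and $\langle\bigcup_{y\in Y}\mathcal{G}(Y)_y\rangle\dis{Y}$ coincide; second, that this common subgroup is normal in $\mathcal{G}(Y)$. The second assertion is the easy one and I would dispatch it first using the two preceding lemmas: by the immediately preceding lemma, $N:=\langle\bigcup_y\mathcal{G}(Y)_y\rangle$ is normal in $\mathcal{G}(Y)$, and by Proposition~\ref{prop:normal} the displacement group $\dis{Y}$ is normal in $\mathcal{G}(Y)$; the product of two normal subgroups is normal, so $N\dis{Y}\trianglelefteq\mathcal{G}(Y)$. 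Once the equality $\mathcal{G}(Y)_e\dis{Y}=N\dis{Y}$ is established, normality of the left-hand side follows for free.

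For the equality, the inclusion $\mathcal{G}(Y)_e\dis{Y}\subseteq N\dis{Y}$ is trivial since $\mathcal{G}(Y)_e\subseteq\bigcup_y\mathcal{G}(Y)_y\subseteq N$. The reverse inclusion is the crux. It suffices to show that for every $y\in Y$ one has $\mathcal{G}(Y)_y\subseteq\mathcal{G}(Y)_e\dis{Y}$; then $N=\langle\bigcup_y\mathcal{G}(Y)_y\rangle\subseteq\mathcal{G}(Y)_e\dis{Y}$ because $\mathcal{G}(Y)_e\dis{Y}$ is already a subgroup (being the product of $\mathcal{G}(Y)_e$ with the normal subgroup $\dis{Y}$), whence $N\dis{Y}\subseteq\mathcal{G}(Y)_e\dis{Y}$. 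To prove $\mathcal{G}(Y)_y\subseteq\mathcal{G}(Y)_e\dis{Y}$, fix $y$ and pick $\delta\in\mathcal{G}(Y)_y$. Since $(Y,\rho,\upsilon)$ is indecomposable, $\mathcal{G}(Y)$ acts transitively, so there is $\beta\in\mathcal{G}(Y)$ with $\beta(e)=y$; moreover, by Proposition~\ref{prop:normal} applied to $(Y,\rho,\upsilon)$ we have $\mathcal{G}(Y)=\langle\dis{Y}\cup\{\rho_e\}\rangle$, and since $\dis{Y}$ is normal we may write $\beta=\alpha\rho_e^{\,k}$ for some $\alpha\in\dis{Y}$ and $k\in\Z$. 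The key point is that $\rho_e$ itself fixes $e$ (a solution of multipermutation level $2$ need not be square-free, so this is where one must be slightly careful) — but in fact this is \emph{not} automatic, and here is where the real work lies: instead one should argue at the level of stabilizers. Namely, $\beta^{-1}\mathcal{G}(Y)_y\beta=\mathcal{G}(Y)_{\beta^{-1}(y)}=\mathcal{G}(Y)_e$, so $\mathcal{G}(Y)_y=\beta\,\mathcal{G}(Y)_e\,\beta^{-1}=\alpha\rho_e^{\,k}\,\mathcal{G}(Y)_e\,\rho_e^{-k}\alpha^{-1}$. Now $\rho_e^{\,k}\in\mathcal{G}(Y)=\langle\dis{Y}\cup\{\rho_e\}\rangle$ normalizes nothing obvious, so instead use: for any $\gamma\in\mathcal{G}(Y)_e$, $\rho_e^{\,k}\gamma\rho_e^{-k}\in\mathcal{G}(Y)_{\rho_e^{k}(e)}$, and one writes $\rho_e^{\,k}\gamma\rho_e^{-k}=(\rho_e^{\,k}\gamma\rho_e^{-k}\gamma^{-1})\gamma$ where the first factor is a commutator lying in $\mathcal{G}(Y)'\subseteq\dis{Y}$ (by the remark following Proposition~\ref{prop:normal}) and the second factor $\gamma\in\mathcal{G}(Y)_e$; hence $\rho_e^{\,k}\mathcal{G}(Y)_e\rho_e^{-k}\subseteq\dis{Y}\,\mathcal{G}(Y)_e=\mathcal{G}(Y)_e\dis{Y}$. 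Conjugating further by $\alpha\in\dis{Y}$ and using normality of $\dis{Y}$ keeps everything inside $\mathcal{G}(Y)_e\dis{Y}$. Therefore $\mathcal{G}(Y)_y\subseteq\mathcal{G}(Y)_e\dis{Y}$ for every $y$, completing the proof.

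The main obstacle, as indicated, is the handling of the conjugates $\beta\mathcal{G}(Y)_e\beta^{-1}$: one cannot simply say $\beta$ normalizes $\mathcal{G}(Y)_e$, and the trick is to absorb the discrepancy into $\dis{Y}$ using that $\mathcal{G}(Y)'\subseteq\dis{Y}$ (equivalently, that $\mathcal{G}(Y)/\dis{Y}$ is abelian, cyclic even, generated by the image of $\rho_e$). A cleaner formulation avoiding any case analysis: since $\mathcal{G}(Y)/\dis{Y}$ is abelian, the image of $\mathcal{G}(Y)_e$ in $\mathcal{G}(Y)/\dis{Y}$ is a normal — indeed central — subgroup, and one checks that the image of $\langle\bigcup_y\mathcal{G}(Y)_y\rangle$ equals the image of $\mathcal{G}(Y)_e$ because all point stabilizers are conjugate and conjugation is trivial modulo $\dis{Y}$; pulling back gives $\mathcal{G}(Y)_e\dis{Y}=N\dis{Y}$. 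I would present the argument in this quotient-group form, as it makes the role of $\dis{Y}\supseteq\mathcal{G}(Y)'$ transparent and sidesteps the question of whether $\rho_e$ fixes $e$.
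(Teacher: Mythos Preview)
Your proposal is correct and rests on the same idea as the paper's proof: the commutator subgroup $\mathcal{G}(Y)'$ lies in $\dis{Y}$, so all point stabilizers $\mathcal{G}(Y)_y$ have the same image modulo $\dis{Y}$. The paper executes this more directly than your first pass: it takes $\alpha\in\mathcal{G}(Y)_y$, picks $\beta$ with $\beta(e)=y$, and simply writes
\[
\alpha=(\beta^{-1}\alpha\beta)\,[\beta,\alpha]\in\mathcal{G}(Y)_e\dis{Y},
\]
with no need to decompose $\beta$ as $\alpha\rho_e^{\,k}$ or to worry about whether $\rho_e$ fixes $e$; this is exactly your ``cleaner formulation'' made explicit in one line. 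Normality is then observed, as you do, because $\langle\bigcup_y\mathcal{G}(Y)_y\rangle\dis{Y}$ is the product of two normal subgroups.
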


\begin{proof}
 Let $\alpha\in \mathcal{G}(Y)_y$, for some~$y\in Y$. Let $\beta\in \mathcal{G}(Y)$ send $e$ to~$y$. Then $\beta^{-1}\alpha\beta\in \mathcal{G}(Y)_e$
 and $\alpha=\beta^{-1}\alpha\beta[\beta,\alpha]\in\mathcal{G}(Y)_e\dis{Y}$.
 Hence $\bigcup \mathcal{G}(Y)_y\subseteq \mathcal{G}(Y)_e\dis{Y}$.
 
 Now $\langle \bigcup \mathcal{G}(Y)_y\rangle\dis{Y}$ is normal since it is a product of two normal subgroups.
\end{proof}

\begin{proposition}\label{prop:512}
Let $(Y,\rho,\upsilon)$ be an indecomposable solution of multipermutation level~$2$ and let $e\in Y$. Then
 $\mathcal{G}(Y)/(\mathcal{G}(Y)_e\dis{Y})\cong \Z_m$.
\end{proposition}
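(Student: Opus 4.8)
## Proof Proposal

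The plan is to exploit the concrete description of $(Y,\rho,\upsilon)$ as a quotient $\mathcal{S}(G\times\Z_n,\mathbf{c})/\theta$ with $\theta=\theta(m,H,r)$, which we are entitled to do by Propositions~\ref{prop:homim} and~\ref{prop:homfin}, together with the structural facts already established: $\mathcal{G}(X)=\dis{X}\rtimes\langle\pi\rangle$ (Theorem~\ref{th:main}), $\dis{X}$ abelian and normal (Proposition~\ref{prop:normal}), and $\mathcal{G}(Y)$ generated by $\dis{Y}$ together with any single $\rho_e$ (Proposition~\ref{prop:normal} again, applied to $Y$). Since $\rho_e^j\in\dis{Y}\rho_e^j$, the quotient group $\mathcal{G}(Y)/(\mathcal{G}(Y)_e\dis{Y})$ is cyclic, generated by the image of $\rho_e$; the whole content of the statement is the computation of its order. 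I would fix $e=(0,0)/\theta=\widetilde 0$, so that $\rho_e$ is the image of $\pi=\sigma_{(0,0)}$, and the image of $\pi^j$ in $\mathcal{G}(Y)/(\mathcal{G}(Y)_e\dis{Y})$ is trivial precisely when $\pi^j\in\mathcal{G}(Y)_e\dis{Y}$ — in $Y$-terms, when $\obrazS{\pi}{\theta}^{\,j}$ maps $e$ into the $\dis{Y}$-orbit of a stabilizer element, i.e. when $\obrazS{\pi}{\theta}^{\,j}(e)$ lies in the same $\dis{Y}\mathcal{G}(Y)_e$-coset as $e$. Because $\dis{Y}$ acts on $Y$ through the map $\Psi$ of Proposition~\ref{prop:510} with $\dis{Y}/\dis{Y}_e\cong G/H$, an element of $\mathcal{G}(Y)_e\dis{Y}$ acting on $e$ produces exactly the elements $(h,0)/\theta$ with $h\in H$; so $\pi^j\in\mathcal{G}(Y)_e\dis{Y}$ iff $(0,j)/\theta=(h,0)/\theta$ for some $h\in H$.

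Now I unfold this condition using the explicit description \eqref{def:con} of $\theta$: $(0,j)/\theta=(h,0)/\theta$ holds iff $j\equiv 0\pmod m$ and $h\equiv\frac{j}{m}\cdot r\pmod H$, and the second part is automatically satisfiable (taking $h$ to be any representative of $\frac jm\cdot r+H$, which lies in $H$ exactly when $\frac jm\cdot r\in H$ — but we only need $h\in H$ to exist, and the congruence $h\equiv\frac jm\cdot r\pmod H$ forces $\frac jm\cdot r\in H$). So the condition ``$\pi^j$ acts on $e$ inside $\mathcal{G}(Y)_e\dis{Y}\cdot e$'' is equivalent to $m\mid j$. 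That is not quite the same as $\pi^j\in\mathcal{G}(Y)_e\dis{Y}$ as a group element; I must upgrade ``acts correctly on $e$'' to ``lies in the subgroup.'' Here the key point is that $\mathcal{G}(Y)_e\dis{Y}$ is a normal subgroup (the preceding Lemma), and $\mathcal{G}(Y)=\dis{Y}\langle\rho_e\rangle$, so every element of $\mathcal{G}(Y)_e\dis{Y}$ has the form $\delta\rho_e^{\,t}$ with $\delta\in\dis{Y}$ and, since $\dis{Y}\subseteq\mathcal{G}(Y)_e\dis{Y}$, the coset $\delta\rho_e^{\,t}(\mathcal{G}(Y)_e\dis{Y})$ is trivial iff $\rho_e^{\,t}\in\mathcal{G}(Y)_e\dis{Y}$; and then applying this element to $e$ shows $t$ must satisfy $m\mid t$ by the previous paragraph, while conversely $\rho_e^{\,m}$ visibly sends $e$ to $(0,m)/\theta$, and $(0,m)/\theta=(r,0)/\theta$ by \eqref{def:con} (with $\frac mm\cdot r=r$, which need not lie in $H$!). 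This last observation shows I must be slightly more careful: $\rho_e^{\,m}$ need not fix $e$, but $\rho_e^{\,m}$ differs from an element of $\dis{Y}$ that sends $e\mapsto(r,0)/\theta$ by an element of $\mathcal{G}(Y)_e$, hence $\rho_e^{\,m}\in\mathcal{G}(Y)_e\dis{Y}$ regardless. So the image of $\rho_e$ in the quotient has order exactly $m$ when $m<\infty$, and infinite order when $m=\infty$; in the latter case $m=n$ (since $m\mid n$ and we agreed $m=n$ exactly when no ``level-collapsing'' occurs) and one reads off $\Z_m=\Z$.

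Assembling: the map $\Z\to\mathcal{G}(Y)/(\mathcal{G}(Y)_e\dis{Y})$, $j\mapsto\rho_e^{\,j}(\mathcal{G}(Y)_e\dis{Y})$, is a surjective group homomorphism (surjective because $\mathcal{G}(Y)=\dis{Y}\langle\rho_e\rangle$ and $\dis{Y}$ is killed) whose kernel is $m\Z$ by the analysis above, giving the isomorphism with $\Z_m$ (interpreting $\Z_\infty=\Z$ per the paper's convention). I expect the main obstacle to be exactly the bookkeeping flagged in the second and third paragraphs: translating cleanly between the \emph{action} on the base point $e$ (where \eqref{def:con} is directly usable) and \emph{membership} in the group $\mathcal{G}(Y)_e\dis{Y}$, and handling the fact that $\rho_e^{\,m}$ need not stabilize $e$ but only lands in $\dis{Y}\cdot e$ composed with a stabilizer — this is where normality of $\mathcal{G}(Y)_e\dis{Y}$ and the semidirect-product structure of $\mathcal{G}(X)$ do the real work. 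The surjectivity and the well-definedness of the whole setup are routine given the earlier propositions.
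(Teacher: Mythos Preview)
Your strategy is essentially the paper's: both arguments show that $\mathcal{G}(Y)/(\mathcal{G}(Y)_e\dis{Y})$ is cyclic, generated by the class of $\rho_e$, and then determine the order by checking for which $j$ the element $\rho_e^{\,j}$ (equivalently, $\pi^j$ upstairs) lands in $\mathcal{G}(Y)_e\dis{Y}$, using the explicit description~\eqref{def:con} of the congruence to see that this happens precisely when $m\mid j$.

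There is, however, one concrete error in your write-up that you should fix rather than merely work around. You claim that ``an element of $\mathcal{G}(Y)_e\dis{Y}$ acting on $e$ produces exactly the elements $(h,0)/\theta$ with $h\in H$.'' This is false: for $h\in H$ we have $(h,0)/\theta=(0,0)/\theta=e$, so your statement would force $\dis{Y}\subseteq\mathcal{G}(Y)_e$. The correct description is that the $\dis{Y}$-orbit of $e$ is $\{(g,0)/\theta\mid g\in G\}$, parametrised by $G/H$ via Proposition~\ref{prop:510}. Once you use this, the equivalence $\rho_e^{\,j}\in\dis{Y}\,\mathcal{G}(Y)_e\iff \rho_e^{\,j}(e)\in\dis{Y}\!\cdot e\iff (0,j)/\theta=(g,0)/\theta$ for some $g\in G$ goes through cleanly in one line (if $\rho_e^{\,j}(e)=\delta(e)$ then $\delta^{-1}\rho_e^{\,j}\in\mathcal{G}(Y)_e$), and~\eqref{def:con} gives $m\mid j$ with no residual condition on $r$. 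Your later self-correction reaches the right conclusion, but the intermediate paragraph mixing ``$h\in H$'' with ``$\tfrac jm\cdot r\in H$'' is simply wrong and should be deleted.

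For comparison, the paper avoids this bookkeeping by pulling the whole computation back along $\Phi_{\mathcal{G}}:\mathcal{G}(X)\to\mathcal{G}(Y)$: since $\mathcal{G}(X)=\dis{X}\rtimes\langle\pi\rangle$ is an honest semidirect product, one shows $\Phi_{\mathcal{G}}^{-1}\bigl(\mathcal{G}(Y)_e\dis{Y}\bigr)=\dis{X}\,\langle\pi^m\rangle$ and reads off the quotient as $\langle\pi\rangle/\langle\pi^m\rangle\cong\Z_m$. Your direct approach in $Y$ is perfectly legitimate and arguably more transparent once the orbit of $e$ is described correctly; the paper's lift just makes the group theory tidier because in $X$ the factorisation $\gamma\pi^k$ is unique.
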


\begin{proof}
Let $\Phi\colon X\to Y$ be the natural projection from $(X,\sigma, \tau)$ onto $(Y,\rho,\upsilon)$.
 This homomorphism induces the onto group homomorphism $\Phi_{\mathcal{G}}\colon \mathcal{G}(X)\to\mathcal{G}(Y)$ with $\Phi_{\mathcal{G}}(\nu)(x/\theta)=\nu(x)/\theta$, for $\nu\in \mathcal{G}(X)$. Further, let $\Psi$ be the natural projection from the group $\mathcal{G}(Y)$ onto 
$\mathcal{G}(Y)/(\langle \bigcup \mathcal{G}(Y)_y\rangle\dis{Y})$.
 This means that $\Ker\Psi=\langle \bigcup \mathcal{G}(Y)_y\rangle\dis{Y}$
 and $\mathcal{G}(Y)/\Ker\Psi\cong \mathcal{G}(X)/\Phi^{-1}_{\mathcal{G}}[\Ker\Psi]$. 
 
Recall that $m$ is the smallest positive integer such that there exists $a\in G$ with
$(0,0)/\theta=(a,m)/\theta$.
Clearly, $\Phi_{\mathcal{G}}(\dis{X})\subseteq\Ker\Psi$. Now let $\gamma\in\dis{X}$ be such that
 $\gamma((0,m))=(a,m)$. We then know
 that $\Phi_{\mathcal{G}}(\gamma\pi^m)\in \mathcal{G}(Y)_{\Phi((0,0))}$. Hence
 $\Phi_{\mathcal{G}}(\gamma\pi^m)\in\Ker\Psi$
 and therefore $\dis{X}\langle\pi^m\rangle\subseteq \Phi_{\mathcal{G}}^{-1}[\Ker\Psi]$. On the other hand, let $\gamma\pi^k\in \mathcal{G}(X)$ and $\Phi_{\mathcal{G}}(\gamma\pi^k)\in {\mathcal{G}}_{\Phi((0,0))}$. Hence $(0,0)/\theta=\Phi((0,0))=\Phi_{\mathcal{G}}(\gamma\pi^k)(\Phi((0,0)))=\Phi(\gamma\pi^k((0,0)))=\gamma\pi^k((0,0))/\theta=\gamma((0,k))/\theta$. So, $(0,0)/\theta=\gamma((0,k))/\theta$ and $m|k$. Therefore, $\pi^k\in \langle\pi^m\rangle$ which implies $\Phi_{\mathcal{G}}^{-1}[\Ker\Psi]\subseteq \dis{X}\langle\pi^m\rangle$.
 
Finally, $\mathcal{G}(Y)/\Ker\Psi\cong \mathcal{G}(X)/\Phi^{-1}_{\mathcal{G}}[\Ker\Psi]=\dis{X}\langle\pi\rangle/\dis{X}\langle\pi^m\rangle=\langle\pi\rangle/\langle\pi^m\rangle
\cong\Z_m$.
\end{proof}

\begin{corollary}\label{cor:amount}
Let $(Y,\rho,\upsilon)$ be an indecomposable solution of multipermutation level~$2$. Then $|Y|=m\cdot[G:H]$.
\end{corollary}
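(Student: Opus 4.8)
The plan is to recognize that the corollary is a direct counting consequence of the two structural results just established, Proposition~\ref{prop:510} and Proposition~\ref{prop:512}, together with the fact that $\dis{Y}$ acts transitively on $Y$. First I would recall that by Proposition~\ref{prop:homfin} (and the discussion following Proposition~\ref{prop:image}) every indecomposable solution $(Y,\rho,\upsilon)$ of multipermutation level~$2$ arises as $(X,\sigma,\tau)/\theta$ for $(X,\sigma,\tau)=\mathcal{S}(G\times\Z_n,\mathbf{c})$ and $\theta=\theta(m,H,r)$, so the parameters $m$, $G$, $H$ in the statement are exactly those coming from this presentation. Fix $e\in Y$. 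The orbit–stabilizer theorem for the action of $\mathcal{G}(Y)$ on $Y$ gives $|Y|=[\mathcal{G}(Y):\mathcal{G}(Y)_e]$.

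Next I would break this index into two factors using the intermediate subgroup $\mathcal{G}(Y)_e\dis{Y}$:
\[
[\mathcal{G}(Y):\mathcal{G}(Y)_e]=[\mathcal{G}(Y):\mathcal{G}(Y)_e\dis{Y}]\cdot[\mathcal{G}(Y)_e\dis{Y}:\mathcal{G}(Y)_e].
\]
The first factor equals $m$ by Proposition~\ref{prop:512}. For the second factor, the standard isomorphism theorem gives $[\mathcal{G}(Y)_e\dis{Y}:\mathcal{G}(Y)_e]=[\dis{Y}:\dis{Y}\cap\mathcal{G}(Y)_e]=[\dis{Y}:\dis{Y}_e]$, and by Proposition~\ref{prop:510} this equals $[G:H]$. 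Combining, $|Y|=m\cdot[G:H]$.

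Alternatively, and perhaps more transparently, I would argue via the displacement group directly: since $(Y,\rho,\upsilon)$ is indecomposable of level~$2$, $\dis{Y}$ has at most $m$ orbits on $Y$ — indeed the quotient $\mathcal{G}(Y)/(\mathcal{G}(Y)_e\dis{Y})\cong\Z_m$ measures exactly how the $\langle\pi\rangle$-direction permutes the $\dis{Y}$-orbits, so there are exactly $m$ such orbits, each of size $[\dis{Y}:\dis{Y}_e]=[G:H]$ by orbit–stabilizer and Proposition~\ref{prop:510}. Summing, $|Y|=m\cdot[G:H]$. I do not expect a genuine obstacle here: the only point requiring a little care is checking that $\dis{Y}$ really has $m$ orbits of equal size (equivalently, that $\mathcal{G}(Y)_e\dis{Y}$ has index exactly $m$ and that all $\dis{Y}$-orbits are conjugate under $\mathcal{G}(Y)$, hence equipotent), but both follow immediately from normality of $\dis{Y}$ in $\mathcal{G}(Y)$ (Proposition~\ref{prop:normal}) and from Propositions~\ref{prop:510} and~\ref{prop:512}.
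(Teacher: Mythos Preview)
Your main argument is correct and is exactly the paper's proof: orbit--stabilizer gives $|Y|=[\mathcal{G}(Y):\mathcal{G}(Y)_e]$, which you factor through the intermediate subgroup $\mathcal{G}(Y)_e\dis{Y}$ and then invoke Propositions~\ref{prop:512} and~\ref{prop:510}. One small slip: in your opening sentence you say ``$\dis{Y}$ acts transitively on $Y$,'' which is false in general (and indeed your alternative paragraph correctly says $\dis{Y}$ has $m$ orbits); fortunately you never use that claim, since you apply orbit--stabilizer to $\mathcal{G}(Y)$, not to $\dis{Y}$.
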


\begin{proof}
 Choose $e\in Y$ and
 \begin{multline*}
  |Y|=[\mathcal{G}(Y):\mathcal{G}(Y)_e]=
  [\mathcal{G}(Y):\mathcal{G}(Y)_e\dis{Y}]\cdot [\mathcal{G}(Y)_e\dis{Y}:\mathcal{G}(Y)_e]=\\
  [\mathcal{G}(Y):\mathcal{G}(Y)_e\dis{Y}]\cdot [\dis{Y}:\dis{Y}_e]=m\cdot [G:H]. \qedhere
 \end{multline*}
\end{proof}

\begin{lemma}\label{lem:eq}
Let $(Y,\rho,\upsilon)$ be an indecomposable solution of multipermutation level~$2$. For each~$x,y\in Y$, $\rho_x^m=\rho_y^{m}$.
\end{lemma}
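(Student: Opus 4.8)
The plan is to reduce the claim to a computation inside the model $(X,\sigma,\tau)=\mathcal{S}(G\times\Z_n,\mathbf{c})$ and then push it through the natural projection $\Phi\colon X\to Y$. Since $(Y,\rho,\upsilon)$ is indecomposable of multipermutation level~$2$, by Propositions~\ref{prop:homim} and~\ref{prop:homfin} we may fix such an $(X,\sigma,\tau)$ and a congruence $\theta=\theta(m,H,r)$ with $(Y,\rho,\upsilon)\cong(X/\theta,\obrazS{\sigma}{\theta},\obrazS{\tau}{\theta})$; write $y=\Phi(x)$, $y'=\Phi(x')$ for suitable $x=(a,i)$, $x'=(a',i')$ in $G\times\Z_n$. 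Because $\Phi_{\mathcal{G}}(\sigma_x)=\rho_y$, it is enough to show that $\sigma_x^m$ and $\sigma_{x'}^m$ act the same way modulo~$\theta$, i.e. that $\sigma_{(a,i)}^m((b,j))\mathrel{\theta}\sigma_{(a',i')}^m((b,j))$ for every $(b,j)\in G\times\Z_n$.

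First I would write out both iterates using Equation~\eqref{sigmal}:
\[
\sigma_{(a,i)}^{m}((b,j)) = \Bigl(b+\sum_{\ell=1}^{m}(c_{i-j-\ell}-c_{-j-\ell})\,,\,j+m\Bigr),
\]
and similarly with $i$ replaced by $i'$. The second coordinates agree, so by the description~\eqref{def:con} of $\theta$ I only need that the difference of the first coordinates, namely $\sum_{\ell=1}^{m}(c_{i-j-\ell}-c_{i'-j-\ell})$, lies in $H$ (the $r$-term vanishes since the two $\Z_n$-coordinates coincide). Now as $\ell$ runs through $1,\dots,m$, the index $i-j-\ell$ runs through a complete set of residues modulo~$m$ inside $\Z_n$ shifted appropriately, and likewise $i'-j-\ell$; since $c_t-c_{t+m}\in H$ for all $t$ by Lemma~\ref{lem:5.3}(2), each $c_{i-j-\ell}$ is congruent mod~$H$ to $c_{i'-j-\ell'}$ for the $\ell'$ with $i-j-\ell\equiv i'-j-\ell'\pmod m$. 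Matching up the two sums term by term via this reindexing shows the difference is a sum of elements of~$H$, hence in~$H$. This is exactly what is needed, and the indexing bookkeeping — making sure the bijection $\ell\mapsto\ell'$ on $\{1,\dots,m\}$ is well defined using the divisibility $m\mid n$ (or $n=\infty$ with the convention in force) — is the only place requiring care.

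The main obstacle I anticipate is purely that reindexing step: one must argue cleanly that summing $c_{i-j-\ell}$ over a full block of $m$ consecutive indices, versus over the block for $i'$, differs only by elements of~$H$, rather than trying to evaluate either sum. An alternative, cleaner route avoiding indices altogether: by Proposition~\ref{prop:trans}/Corollary~\ref{cor:semi-regular} the automorphism group of $(X,\sigma,\tau)$ is transitive, so there is an automorphism carrying $x$ to $x''=(a'',i)$ for any prescribed first coordinate, giving $\sigma_{(a,i)}^m=\sigma_{(a'',i)}^m$ outright (the permutation $L_{(a,i)}=L_{(0,i)}$ does not even depend on $a$, so in fact $\sigma_x$ depends only on the $\Z_n$-coordinate of $x$). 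Thus in $X$ one reduces to comparing $\sigma_{(0,i)}^m$ with $\sigma_{(0,i')}^m$, and the index argument above finishes it; projecting along $\Phi$ then yields $\rho_x^m=\rho_y^m$ for all $x,y\in Y$. I would present the direct computation since it is self-contained, and remark that the dependence of $\sigma_{(a,i)}$ on $i$ alone makes the statement almost immediate once $c_t\equiv c_{t+m}\pmod H$ is in hand.
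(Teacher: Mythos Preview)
Your proposal is correct and follows essentially the same approach as the paper: lift to $(X,\sigma,\tau)=\mathcal{S}(G\times\Z_n,\mathbf{c})$, expand $\sigma_{(a,i)}^m$ via~\eqref{sigmal}, and use $c_t-c_{t+m}\in H$ from Proposition~\ref{prop:image} to conclude that the first coordinates agree modulo~$H$. The paper streamlines the index bookkeeping by comparing every $\sigma_{(a,i)}^m$ to the fixed $\sigma_{(0,0)}^m$, for which the sum $\sum_{\ell=1}^m(c_{-j-\ell}-c_{-j-\ell})$ is identically zero, so one only needs $\sum_{\ell=1}^m(c_{i-j-\ell}-c_{-j-\ell})\in H$; your pairwise comparison of arbitrary $i,i'$ and the reindexing bijection $\ell\mapsto\ell'$ achieve the same thing with a little extra work. (Your alternative via automorphism transitivity is slightly garbled --- conjugacy alone gives $\Psi\sigma_x\Psi^{-1}=\sigma_{\Psi(x)}$, not equality --- but your parenthetical remark that $\sigma_{(a,i)}$ depends only on~$i$ is the correct and sufficient observation there.)
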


\begin{proof}
By Proposition \ref{prop:image}, for each~$i\in\Z_n$, there exists some $h\in H$, such that, for $(a,i),(b,j)\in G\times \Z_n$,
 \[\sigma_{(a,i)}^m((b,j))=(b+\sum_{\ell=1}^m(c_{i-j-\ell}-c_{-j-\ell}),j+m)=(b+h,j+m).\]
 Since $\sigma_{(0,0)}^m((b,j))=(b,j+m)$, by \eqref{def:con} we have that $\sigma_{(a,i)}^m((b,j))/\theta =\sigma_{(0,0)}^m((b,j))/\theta$, which means
 that, for all $y\in Y$, $\rho_y^m=\rho_{(a,i)/\theta}^m=\rho_{(0,0)/\theta}^m$.
\end{proof}


\begin{proposition}\label{prop:Auttrans}
Let $(Y,\rho,\upsilon)$ be an indecomposable solution of multipermutation level~$2$. The automorphism group $\aut{Y}$ is regular.
\end{proposition}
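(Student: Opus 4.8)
The plan is to lift automorphisms from the covering solution to $Y$. Keeping the notation fixed just before the statement, we have $Y\cong X/\theta$ with $X=\mathcal{S}(G\times\Z_n,\mathbf{c})$ and $\theta=\theta(m,H,r)$, and by Proposition~\ref{prop:trans} the group $\aut{X}$ is already regular. Since ``regular'' means transitive plus free, and freeness of $\aut{Y}$ is immediate — an automorphism of $Y$ fixing a point $e$, together with $\id_Y$, are two homomorphisms of indecomposable solutions of multipermutation level~$2$ agreeing at $e$, hence equal by Corollary~\ref{cor:semi-regular} — the whole task reduces to showing that $\aut{Y}$ is transitive.

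The key step is the claim that \emph{every automorphism of $X$ respects the congruence $\theta$}. From the proof of Proposition~\ref{prop:trans}, together with freeness of $\aut{X}$, one sees that $\aut{X}$ is exactly $\{\Phi_{(g,k)}\mid(g,k)\in G\times\Z_n\}$, where $\Phi_{(g,k)}(a,i)=\bigl(a+g+S_i^{(k)},\,k+i\bigr)$ and $S_i^{(k)}=\sum_{\ell=1}^{i}(c_{-\ell}-c_{-k-\ell})$ (indeed these maps are automorphisms, are pairwise distinct, and exhaust $\aut{X}$ by its regularity). So I would take a $\theta$-pair $(a,i)\asymp(a',i')$ and verify $\Phi_{(g,k)}(a,i)\asymp\Phi_{(g,k)}(a',i')$ via the description~\eqref{def:con}: the requirement $i-i'\equiv0\pmod m$ passes verbatim to $(k+i)-(k+i')$, and — since $(a,i)\asymp(a',i')$ already yields $a'-a\equiv\frac{i-i'}{m}\cdot r\pmod H$ (the case $m=\infty$, where $i=i'$, being trivial) — the only remaining point is that the ``twist'' $S_{i'}^{(k)}-S_i^{(k)}$ lies in $H$.

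To see this, write $i'=i+tm$. Proposition~\ref{prop:image}(ii) gives $c_j\equiv c_{j+m}\pmod H$, so $(c_\ell)$ depends only on $\ell$ modulo $m$ once reduced mod $H$; hence $\ell\mapsto c_{-\ell}-c_{-k-\ell}$ has period $m$ modulo $H$, which yields $S_{i'}^{(k)}-S_i^{(k)}\equiv t\sum_{\ell=1}^{m}(c_{-\ell}-c_{-k-\ell})\pmod H$. Moreover $\sum_{\ell=1}^{m}(c_{-\ell}-c_{-k-\ell})\in H$, because $\{-1,\dots,-m\}$ and $\{-k-1,\dots,-k-m\}$ are both complete residue systems modulo $m$, so the two partial sums agree mod $H$ by the periodicity just noted (equivalently, this is the fact underlying Lemma~\ref{lem:eq}). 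This proves the claim.

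Once the claim is in hand, the conclusion is soft: $\Phi\mapsto\overline{\Phi}$ with $\overline{\Phi}(x/\theta)=\Phi(x)/\theta$ is a well-defined group homomorphism $\aut{X}\to\aut{Y}$, and its image is transitive on $Y=X/\theta$ because $\aut{X}$ is transitive on $X$ and $x\mapsto x/\theta$ is onto; hence $\aut{Y}$ is transitive and therefore regular. I expect the only genuine obstacle to be the index bookkeeping in the third paragraph showing the twist stays in $H$; it rests entirely on the periodicity of $\mathbf{c}$ modulo $H$ (item (ii) of Proposition~\ref{prop:image}) and on the sum over one full period landing in $H$.
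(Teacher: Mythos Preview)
Your proof is correct and takes a genuinely different route from the paper's. The paper works \emph{intrinsically in $Y$}: it fixes $\gamma\in\dis{Y}$ and $k\in\N$, writes each element of $Y$ as $\alpha(\widetilde i)$ with $\alpha\in\dis{Y}$, defines
\[
\Phi(\alpha(\widetilde i))=\rho_{\widetilde k}^{\,i}\,\pi^{k-i}\alpha\,\pi^{i-k}\gamma(\widetilde k),
\]
and then spends most of the argument checking that this is well defined (an element may have several representations $\alpha(\widetilde i)$), bijective, and a homomorphism; the crucial well-definedness step uses Lemma~\ref{lem:eq}, i.e.\ $\rho_x^m=\rho_y^m$. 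You work \emph{extrinsically}: you exploit that $\aut{X}$ is already known to be regular (Proposition~\ref{prop:trans}) and that every $\Phi_{(g,k)}$ has an explicit formula, and you simply check that $\Phi_{(g,k)}$ respects $\theta(m,H,r)$, which boils down to the same periodicity fact $c_j\equiv c_{j+m}\pmod H$ that underlies Lemma~\ref{lem:eq}. Your approach avoids the well-definedness bookkeeping entirely and makes the descent step soft; the paper's approach has the advantage of producing the automorphisms of $Y$ in closed form without reference to any cover. Both rest on Proposition~\ref{prop:image}(ii) and Corollary~\ref{cor:semi-regular}, so the underlying content is the same.
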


\begin{proof}
Let $\gamma\in\dis{Y}$, $\pi=\rho_{(0,0)/\theta}$ and $k\in\N$.
 For each $i\in\N$ and $\alpha\in\dis{Y}$, let
 \[\Phi(\alpha(\widetilde{i}))=
 \rho_{\widetilde k}^i \pi^{k-i}\alpha\pi^{i-k}\gamma(\widetilde k).
 \]
We want to show that $\Phi$ is an automorphism of the solution $(Y,\rho,\upsilon)$
but first we have to show that~$\Phi$ is well defined
since each element $\alpha(\widetilde i)$ may admit several representations.
Let $\alpha,\alpha'\in\dis{Y}$ and $i,i'\in \mathbb{N}$. First note that  $\pi^{k-i}\alpha\pi^{i-k}\gamma\in\dis{Y}$. Further, if $i-i'=m\ell$, for some $\ell\in \mathbb{N}$,  then by Lemma \ref{lem:eq} we have
 \begin{multline*}
\alpha(\widetilde i)=\alpha'(\widetilde{i'})\quad \Leftrightarrow\quad \alpha\pi^{i-k}(\widetilde k)=\alpha'\pi^{i'-k}(\widetilde{k})
\quad \Leftrightarrow\quad 
\gamma\pi^{k-i} \alpha\pi^{i-k}(\widetilde k)=\gamma\pi^{-m\ell}\pi^{k-i'}\alpha'\pi^{i'-k}(\widetilde{k})\quad \Leftrightarrow\quad\\
\rho^i_{\widetilde{k}}\pi^{k-i} \alpha\pi^{i-k}\gamma(\widetilde k)=\rho^i_{\widetilde{k}}\rho_{\widetilde k}^{-m\ell}\pi^{k-i'}\alpha'\pi^{i'-k}\gamma(\widetilde{k})
\quad \Leftrightarrow\quad
\rho^i_{\widetilde{k}}\pi^{k-i} \alpha\pi^{i-k}\gamma(\widetilde k)=\rho^{i'}_{\widetilde{k}}\pi^{k-i'}\alpha'\pi^{i'-k}\gamma(\widetilde{k})
\quad \Leftrightarrow\quad\\
\Phi(\alpha(\widetilde{i}))=\Phi(\alpha'(\widetilde{i'})).
 \end{multline*}
Hence, the mapping $\Phi$ is well-defined 
 since $\alpha(\widetilde i)=\alpha'(\widetilde{i'})$ 
 implies $m\mid (i-i')$.
%
Conversely, 
 necessarily $\rho_{\widetilde k}^i \pi^{k-i}\alpha\pi^{i-k}\gamma(\widetilde k)=
\rho_{\widetilde k}^{i'} \pi^{k-i'}\alpha'\pi^{i'-k}\gamma(\widetilde k)$
 implies $m\mid (i-i')$,
 and therefore $\Phi$ is injective.
 
 Moreover, $\Phi(\widetilde 0)=\gamma(\widetilde k)$ and $\Phi$ is clearly onto. We shall now prove that it is endomorphism of the solution. Note that $\rho^{i}_{\widetilde{k}}\gamma\pi^{k-i}\alpha\pi^{-k}=\rho^{i}_{\widetilde{k}}\pi^{-i}\pi^k\alpha\pi^{-k}\gamma\in\dis{Y}$. This implies, for $\beta\in \dis{Y}$,
 
 \begin{multline*}
  \Phi(\rho_{\alpha(\widetilde i)}(\beta(\widetilde j)))  \stackrel{\eqref{eq:disX}}{=}
  \Phi ({L}_{\widetilde i}\pi\beta(\widetilde j)) =
  \Phi ({L}_{\widetilde i}\pi\beta\pi^{-1}(\widetilde {j+1}))\\ 
  =\rho_{\widetilde k}^{j+1}\pi^{k-j-1}{L}_{\widetilde i}\pi\beta\pi^{-1}\pi^{1+j-k}\gamma(\widetilde k)
  \stackrel{\eqref{piLxpi}}{=}\rho_{\widetilde k}^{j+1}{L}_{\widetilde{k-j-1}}^{-1}{L}_{\widetilde{i+k-j-1}}\pi^{k-j-1}\pi\beta\pi^{j-k}\gamma(\widetilde k)\\
  \stackrel{\eqref{piLxpi}}{=}{L}_{\widetilde{k}}^{-1}{L}_{\widetilde{i+k}}
  \rho_{\widetilde k}^{j+1}
  \pi^{k-j}\beta\pi^{j-k}\gamma(\widetilde k)
  ={L}_{\widetilde{i+k}}\pi
  \rho_{\widetilde k}^{j}
  \pi^{k-j}\beta\pi^{j-k}\gamma(\widetilde k)\\=
  \rho_{\widetilde{i+k}} \Phi(\beta(\widetilde j))
  \stackrel{\eqref{eq:disX}}{=}
  \rho_{\rho_{\widetilde k}^i\pi^{k-i}\alpha\pi^{i-k}\gamma(\widetilde{k})} \Phi(\beta(\widetilde j))=
  \rho_{\Phi(\alpha(\widetilde i))}
  \Phi(\beta(\widetilde j))
 \end{multline*}
and hence $\aut{Y}$ is transitive. And, according to Corollary~\ref{cor:semi-regular}, $\Aut(Y)$ is semi-regular.
\end{proof}

\begin{theorem}\label{thm:congr}
 Let $(X,\sigma,\tau)=\mathcal{S}(G\times \Z_n,\mathbf{c})$.
Let $\theta_1=\theta(m_1,H_1,r_1)$ and $\theta_2=\theta(m_2,H_2,r_2)$. Then
the solutions $(X,\sigma,\tau)/\theta_1$ and $(X,\sigma,\tau)/\theta_2$
are isomorphic if and only if $m_1=m_2$, $H_1=H_2$ and $r_1\equiv r_2\pmod{H_2}$.
\end{theorem}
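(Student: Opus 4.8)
The plan is to derive both implications from Corollary~\ref{cor:semi-regular} (homomorphisms out of an indecomposable solution are determined by a single value) together with Proposition~\ref{prop:Auttrans} (the automorphism group of an indecomposable multipermutation solution of level at most~$2$ is transitive, in fact regular). The observation I would make at the very start is that $X=\mathcal{S}(G\times\Z_n,\mathbf{c})$ is \emph{itself} an indecomposable solution of multipermutation level~$2$ by Theorem~\ref{th:main}, so both results are available with $X$ sitting in the domain of the maps considered; and $X/\theta_2$ is again an indecomposable solution of multipermutation level at most~$2$, being a homomorphic image of $X$.

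The direction ``$\Leftarrow$'' is immediate: if $m_1=m_2$, $H_1=H_2$ and $r_1\equiv r_2\pmod{H_2}$, then the formula~\eqref{def:con} defining $\theta(m,H,r)$ gives $\theta_1=\theta_2$ as relations on $G\times\Z_n$, so the two quotient solutions literally coincide. (When $m_1=m_2=\infty$ the parameter $r$ does not occur in~\eqref{def:con}; the statement is meant for $m$ finite, which is anyway the only situation occurring for finite $n$, where $m\mid n$.)

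For ``$\Rightarrow$'' I would proceed as follows. Write $\chi_i\colon X\to X/\theta_i$ for the natural projections and let $f\colon X/\theta_1\to X/\theta_2$ be an isomorphism of solutions. Using transitivity of $\aut{X/\theta_2}$ (Proposition~\ref{prop:Auttrans}) pick $\alpha\in\aut{X/\theta_2}$ with $\alpha\bigl(f(\chi_1((0,0)))\bigr)=\chi_2((0,0))$ and set $g=\alpha\circ f\circ\chi_1$. Then $g\colon X\to X/\theta_2$ is a homomorphism of solutions (a composite of such), it satisfies $g((0,0))=\chi_2((0,0))$, and, since $\alpha\circ f$ is a bijection, the associated congruence is $\kker g=\kker\chi_1=\theta_1$. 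Now Corollary~\ref{cor:semi-regular} applied to the homomorphisms $g,\chi_2\colon X\to X/\theta_2$, which agree at $(0,0)$, forces $g=\chi_2$, hence $\theta_1=\kker g=\kker\chi_2=\theta_2$ as relations. Finally I would read off the three parameters from the equality $\theta_1=\theta_2$ via~\eqref{def:con}: $m$ is the least positive $k$ for which some identification $(a,k)\asymp(a',0)$ occurs, so $m_1=m_2$; the set $\{a\in G\mid(a,0)\asymp(0,0)\}$ equals $H$, so $H_1=H_2$; and, for $m$ finite, $\{a\in G\mid(a,0)\asymp(0,m)\}$ is exactly the coset $r+H$, so $r_1+H_1=r_2+H_2$, i.e.\ $r_1\equiv r_2\pmod{H_2}$.

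I do not anticipate a real obstacle: once one notices that $X$ itself lies in the class under study, the argument is formal, the only delicate points being the normalization of $f$ at a single point (which is precisely what Proposition~\ref{prop:Auttrans} supplies) and the slightly fiddly final step of recovering $(m,H,r)$ from the equality of the two congruence relations. An alternative, invariant-based route would use Corollary~\ref{cor:amount}, Proposition~\ref{prop:510} and Proposition~\ref{prop:512} to recover $m$ and the abstract isomorphism type of $G/H$, but that does not pin down $H$ as a subgroup of $G$ nor the coset of $r$, so the route through Corollary~\ref{cor:semi-regular} seems the clean one.
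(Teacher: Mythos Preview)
Your proof is correct and follows essentially the same approach as the paper: both normalize the isomorphism via Proposition~\ref{prop:Auttrans} so that $(0,0)/\theta_1$ is sent to $(0,0)/\theta_2$, then deduce that the two congruences coincide and read off the parameters from~\eqref{def:con}. The only difference is that the paper carries out the step $\Phi((a,i)/\theta_1)=(a,i)/\theta_2$ by an explicit computation with the formulas of $\mathcal{S}(G\times\Z_n,\mathbf{c})$, whereas you neatly short-circuit this by applying Corollary~\ref{cor:semi-regular} to the two homomorphisms $g,\chi_2\colon X\to X/\theta_2$ agreeing at $(0,0)$.
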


\begin{proof}
 ``$\Rightarrow$''  Suppose that there is an isomorphism $\Phi:X/\theta_1\to X/\theta_2$. Clearly $m_1=m_2$, according
 to Proposition~\ref{prop:510}.
 According to Proposition~\ref{prop:Auttrans},
 we can suppose, without loss of generality $\Phi((0,0)/\theta_1)=(0,0)/\theta_2$. Since $\sigma^i_{(0,0)}((0,0))=(0,i)$,
 necessarily $\Phi((0,i)/\theta_1)=(0,i)/\theta_2$, for each~$i
 \in\Z$. Now, for each~$a\in G$ and $i\in\Z_n$, there exist $j_1,\ldots,j_k\in \Z_n$ and $p_1,\ldots,p_k\in \Z$  such that
 \[(a,i)=\prod L_{(0,j_\ell)}^{p_\ell}((0,i))=\prod(\sigma_{(0,j_\ell)}\sigma_{(0,0)}^{-1})^{p_\ell}((0,i))\]
 and therefore $\Phi((a,i)/\theta_1)=(a,i)/\theta_2$. 
 
 Suppose~$h\in H_1$. Then by \eqref{def:con}, 
$(h,0)/\theta_2=\Phi((h,0)/\theta_1)=\Phi((0,0)/\theta_1)=(0,0)/\theta_2$ 
 and therefore $h\in H_2$ and $H_1\subseteq H_2$. Analogously
 $H_2\subseteq H_1$.
 Now, once again by \eqref{def:con}
 \[(r_2,m_2)/\theta_2=(0,0)/\theta_2=\Phi((0,0)/\theta_1)=\Phi((r_1,m_2)/\theta_1)=(r_1,m_2)/\theta_2,
 \]
which implies $r_1\equiv r_2\pmod {H_2}$.
 
 ``$\Leftarrow$'' If $m_1=m_2$, $H_1=H_2$ and $r_1\equiv r_2\pmod {H_2}$ then $\theta_1=\theta_2$.
\end{proof}

\section{Enumeration}

We have now a theoretic means of constructing all indecomposable solutions
of multipermutation level~$2$
and we shall explicitly do so in some particular cases.
Every such solution
is a homomorphic image of the solution $\mathcal{S}((\bigoplus_\Z \Z)\times \Z,\mathbf{c})$ from Proposition~\ref{prop:homim}. To construct all of them we have to find  all congruences $\theta(m,H,r)$ of the solution. 
Then, according to Theorem~\ref{thm:congr}, different congruences give non-isomorphic solutions.
If~$m$ is finite and $H$ is a subgroup of $(G,+,0)=\bigoplus_\Z\Z$ then, by Proposition~\ref{prop:image} (iii),
\[0=c_0\equiv c_m=\sum_{i=1}^m e_i\pmod H\]
and also, for each $i\in\Z$,
$ e_{i+m} = c_{i+m}-c_{i+m-1} \equiv 
 c_i-c_{i-1} =e_i\pmod H.
$ 
In particular, the group $G/H$ has at most~$m-1$ generators.

If we want to construct all finite $n$-element indecomposable solutions of multipermutation level~$2$ necessarily, by Corollary \ref{cor:amount}, we have to consider all $m$ which divide~$n$. Moreover we know $[G:H]=\frac{n}{m}$. 

If $m=1$ then $G/H$ has~$0$ generators and hence $G=H$.
Another trivial case is $m=n$; then $G=H$ and $\mathcal{S}((\bigoplus_\Z \Z)\times \Z,\mathbf{c})/\theta(m,G,0)$ is a permutation solution.

If $m=2$ then $G/H$ has to be cyclic and there is only one choice
of such~$H$. Nevertheless, there are $[G:H]$ choices of~$r$ (one for each coset of~$H$) and hence there are $[G:H]$ non-isomorphic solutions.


Another easy to calculate case is the case when $G/H$ is
essentially a vector space. For this situation,
let us recall that, given a vector space $V$ of dimension~$n$ over a field~$\mathbb{F}_q$, the number of all subspaces of ~$V$ of dimension~$0<k\leq n$ is equal
  to
\begin{align}\label{num:vect}
\frac{(q^n-1)(q^n-q)\cdots(q^n-q^{k-1})}{(q^k-1)(q^k-q)\cdots(q^k-q^{k-1})}
=
\frac{(q^n-1)(q^{n-1}-1)\cdots(q^{n-k+1}-1)}{(q^k-1)(q^{k-1}-1)\cdots(q-1)}.
\end{align}



\begin{proposition}\label{number}
  Let~$p$ be a prime number, $(A,+,0)$ be an elementary abelian group of order~$p^k$ and let~$m\in\N$. Then the number $\mathfrak{n}(A,m)$ 
of indecomposable solutions $(X,\sigma,\tau)$ of multipermutation
  level~$2$ such that $\dis{X}/\dis{X}_e\cong A$ and  $[\mathcal{G}(X):\mathcal{G}(X)_e\dis{X}]=m$ is
  \begin{align*}
  \mathfrak{n}(A,m)=\begin{cases}
      p^k\cdot\frac{(p^{m-1}-1)(p^{m-2}-1)\cdots(p^{k+1}-1)}{(p^{m-k-1}-1)(p^{m-k-2}-1)\cdots(p-1)} &\text{if }m>k+1,\\
      p^k & \text{if }m=k+1,\\
      0 & \text{if }m\leq k.
      \end{cases}
    \end{align*}  
\end{proposition}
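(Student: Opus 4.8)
The plan is to combine the classification results of the previous section with a count of subspaces of a finite‑dimensional $\mathbb{F}_p$‑vector space. By Proposition~\ref{prop:homim} every indecomposable solution of multipermutation level~$2$ is isomorphic to a quotient of the universal solution $X=\mathcal{S}((\bigoplus_\Z\Z)\times\Z,\mathbf{c})$ by some congruence $\theta(m,H,r)$, and by Theorem~\ref{thm:congr} two such quotients are isomorphic precisely when the parameters agree, with $r$ taken modulo $H$. Moreover, writing $G=\bigoplus_\Z\Z$, Propositions~\ref{prop:510} and~\ref{prop:512} say that the quotient $X/\theta(m,H,r)$ has $\dis{}/\dis{}_e\cong G/H$ and $[\mathcal{G}:\mathcal{G}_e\dis{}]=m$. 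Hence $\mathfrak{n}(A,m)$ is the number of pairs $(H,r+H)$ for which $\theta(m,H,r)$ is a congruence and $G/H\cong A$. Since $n=\infty$ makes items (i) and (iii) of Proposition~\ref{prop:image} vacuous, the admissibility of $\theta(m,H,r)$ does not depend on $r$, and for each admissible $H$ the coset $r+H$ ranges over $G/H$, giving $[G:H]=|A|=p^k$ pairwise non‑isomorphic solutions. Therefore $\mathfrak{n}(A,m)=p^k\cdot N$, where $N$ is the number of subgroups $H\le G$ with $G/H\cong A$ satisfying item (ii) of Proposition~\ref{prop:image}; distinct admissible $H$ give non‑isomorphic solutions by Theorem~\ref{thm:congr}, so there is no overcounting.

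Next I would turn the two remaining conditions on $H$ into linear algebra over $\mathbb{F}_p$. Because $G/H\cong A$ is elementary abelian we have $pG\subseteq H$, so $H$ corresponds to the codimension‑$k$ subspace $W=H/pG$ of $V=G/pG$, an $\mathbb{F}_p$‑space with basis the images $\bar e_i$, $i\in\Z$. Using $c_i-c_{i-1}=e_i$, item (ii) of Proposition~\ref{prop:image} (that $c_i-c_{i+m}\in H$ for all $i$) is, as in the remarks preceding the statement, equivalent to $\bar e_{i+m}=\bar e_i$ in $V/W$ for all $i$ together with $\bar e_1+\cdots+\bar e_m\in W$. Let $W_0\le V$ be spanned by $\{\bar e_{i+m}-\bar e_i\mid i\in\Z\}$; the first part says $W_0\subseteq W$. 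The assignment $\bar e_i\mapsto f_{i\bmod m}$ induces an isomorphism $V/W_0\cong\mathbb{F}_p^m$ with basis $f_0,\dots,f_{m-1}$, under which $\bar e_1+\cdots+\bar e_m$ becomes the ``all‑ones'' vector $v=f_0+\cdots+f_{m-1}\neq0$. So admissible $H$ with $G/H\cong A$ correspond bijectively to codimension‑$k$ subspaces $\overline W$ of $\mathbb{F}_p^m$ containing $v$, i.e. to $(m-k-1)$‑dimensional subspaces of $\mathbb{F}_p^m/\langle v\rangle\cong\mathbb{F}_p^{m-1}$.

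Finally I would read off $N$ from formula~\eqref{num:vect}. For $m>k+1$ the number of $(m-k-1)$‑dimensional subspaces of $\mathbb{F}_p^{m-1}$ is $\frac{(p^{m-1}-1)(p^{m-2}-1)\cdots(p^{k+1}-1)}{(p^{m-k-1}-1)(p^{m-k-2}-1)\cdots(p-1)}$, which after multiplication by $p^k$ gives the first case. For $m=k+1$ there is exactly one such subspace, namely the zero subspace of $\mathbb{F}_p^{k}$, so $N=1$ and $\mathfrak{n}(A,m)=p^k$. For $m\le k$ there is no subspace of the required dimension containing the nonzero vector $v$: if $m=k$ one would need $\overline W=\{0\}\ni v$, which is impossible, and if $m<k$ the required dimension $m-k$ is negative; hence $N=0$.

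The only genuinely delicate point I expect is the verification that $\bar e_i\mapsto f_{i\bmod m}$ has kernel exactly $W_0$, so that $V/W_0$ really is $m$‑dimensional: modulo $W_0$ each $\bar e_i$ is congruent to $\bar e_{i\bmod m}$, so any vector whose $m$‑periodic partial coordinate sums all vanish is already zero in $V/W_0$; this both makes the isomorphism well defined and identifies its kernel. Everything else is bookkeeping with the classification already established (Propositions~\ref{prop:homim}, \ref{prop:510}, \ref{prop:512}, \ref{prop:image} and Theorem~\ref{thm:congr}) and with the subspace count~\eqref{num:vect}.
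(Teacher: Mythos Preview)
Your proposal is correct and follows essentially the same approach as the paper: both reduce the count to choosing a subgroup $H$ of $G=\bigoplus_\Z\Z$ with $G/H\cong A$ satisfying the shift conditions from Proposition~\ref{prop:image}, pass to a quotient that is an $\mathbb{F}_p$-vector space of dimension $m-1$, count subspaces of dimension $m-1-k$ via~\eqref{num:vect}, and multiply by $p^k$ for the choices of $r$. The only cosmetic difference is that the paper packages the three generator families into a single subgroup $N=\langle \sum_{i=1}^m e_i,\ e_{i+m}-e_i,\ p e_i\rangle$ and works in $G/N\cong\mathbb{F}_p^{m-1}$ directly, whereas you first mod out by $pG$, then by the shifts to reach $\mathbb{F}_p^m$, and finally impose $v\in\overline W$ to land in $\mathbb{F}_p^{m-1}$; the counts agree.
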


\begin{proof}
  Every indecomposable solution $(X,\sigma,\tau)$ of multipermutation
  level~$2$ can be obtained as a quotient of the solution  $\mathcal{S}(G,\mathbf{c})$ with $G=(\bigoplus_\Z \Z)\times \Z$ via a congruence
  $\theta(m,H,r)$, for some subgroup~$H$ with $G/H\cong A$ and $r\in G$. According to Theorem~\ref{thm:congr}, different choices of~$H$ and $r\pmod H$
  yield non-isomorphic solutions. Let us count the number of such choices of~$H$.
  
  According to Proposition~\ref{prop:image}, $\sum_{i=1}^m e^i\in H$ and also $e_{i+m}-e_i\in H$, for each~$i\in\Z$. Let now
  \[N=\left\langle \sum_{i=1}^m e^i,\ e_{i+m}-e_i,\ p\cdot e_i \mid i\in \Z\right\rangle.\]
  Clearly, $G/N$ is an elementary abelian group of order~$p^{m-1}$ and $N\leq H$. If we treat $G/N$ as a vector space over~$\mathbb{F}_p$ of dimension~$m-1$ then $H/N$ is a subspace of dimension~$m-1-k$. According to \eqref{num:vect}, for $k<m-1$, there are 
  \[\frac{(p^{m-1}-1)(p^{m-2}-1)\cdots(p^{k+1}-1)}{(p^{m-k-1}-1)(p^{m-k-2}-1)\cdots(p-1)}\]
  such choices of~$H/N$ and therefore as many choices of~$H$. If $k=m-1$ there is exactly 1 subspace of dimension~$0$. If $k>m-1$ then there is no such subspace.
  
  Now, for each choice of~$H$, there are $p^k$ cosets of~$H$ and therefore there are~$p^k$ choices of~$r$.
\end{proof}

\begin{remark}\label{number2}
  For $k=1$, the expression for the number $\mathfrak{n}(A,m)$ simplifies to $\dfrac{p^m-p}{p-1}$.
\end{remark}

\begin{remark}
By Proposition \ref{number} there are exactly 
$ 1+\frac{p^{p}-p}{p-1}$ 
indecomposable solutions of multipermutation level~$2$ of order $p^2$, for a prime $p$. Exactly $p^{p-1}$ of them are composed of cycles of length~$p^2$. For example, there are $13$ indecomposable solutions of multipermutation level~$2$ of order $9$ and $9$ of them are cycles of length~$9$.

Let $2<p<q$ be two different primes.
There are $1+\frac{p^{q}-p}{p-1}
 +\frac{q^{p}-q}{q-1}$
 indecomposable solutions of multipermutation level~$2$ of order $pq$. For example, there are $151$ indecomposable solutions of multipermutation level~$2$ of order $15$.
Or, another example, there are
 $2^{p}+p-1$ indecomposable solutions of multipermutation level~$2$ of order $2p>4$. 
\end{remark}
Since the work of Etingof, Schedler and Soloviev~\cite{ESS} the
researchers ask themselves a question about the proportion of the numbers of the indecomposable solutions with respect to the decomposable ones.
And it was conjectured by Vendramin that the limit towards infinity
of the ratio indecomposables/decomposables goes to 0.
The result of Blackburn~\cite{B13} about the numbers of racks can be easily adapted
to solutions and therefore we know that there are at least $2^{O(n^2)}$ decomposable solutions for each size~$n$.
Now it turns out that there are exponentially many indecomposable
solutions as well.

\begin{corollary}
 Let~$s\in\N$. Then there are  at least
 $2^{n/2}-1$ indecomposable solutions of size~$n=2^s$.
\end{corollary}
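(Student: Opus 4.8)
The plan is to realise at least $2^{n/2}-1$ pairwise non-isomorphic indecomposable solutions of size $n=2^s$ as quotients of the universal solution $\mathcal{U}=\mathcal{S}((\bigoplus_\Z\Z)\times\Z,\mathbf c)$ from Proposition~\ref{prop:homim}. Write $G=\bigoplus_\Z\Z$ with free basis $\{e_i\mid i\in\Z\}$, recall that $c_j-c_i=\sum_{k=i+1}^{j}e_k$ for $i\le j$, and put $m=n/2=2^{s-1}$. Since $\mathcal{U}$ is built with $n=\infty$, conditions (i) and (iii) of Proposition~\ref{prop:image} are vacuous, so a triple $(m,H,r)$ determines a congruence $\theta(m,H,r)$ of $\mathcal{U}$ precisely when $c_i-c_{i+m}\in H$ for all $i\in\Z$, with $r\in G$ unrestricted; by Corollary~\ref{cor:amount} the quotient $\mathcal{U}/\theta(m,H,r)$ has size $m\cdot[G:H]$, which equals $n$ exactly when $[G:H]=2$.

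The first step would be to determine the admissible subgroups $H$ with $[G:H]=2$. Since $c_{i+m}-c_i=e_{i+1}+e_{i+2}+\dots+e_{i+m}$, taking the difference of two consecutive such sums shows that an admissible $H$ must contain $e_j-e_{j+m}$ for all $j$ and also $e_1+\dots+e_m$; conversely any subgroup containing these elements contains every $c_i-c_{i+m}$. Let $K$ be the subgroup they generate. Then $G/K$ is generated by the images $\bar e_1,\dots,\bar e_m$ subject to the single relation $\bar e_1+\dots+\bar e_m=0$, so $G/K\cong\Z^m/\Z(1,\dots,1)$, which is free abelian of rank $m-1$. Hence the admissible $H$ are exactly the preimages of the index-$2$ subgroups of $\Z^{m-1}$, that is, the kernels of the $2^{m-1}-1$ nonzero homomorphisms $\Z^{m-1}\to\Z_2$.

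Next I would count the congruences. For each admissible $H$ the congruence $\theta(m,H,r)$ depends only on $r+H$, so there are $[G:H]=2$ of them, and by Theorem~\ref{thm:congr} the $2(2^{m-1}-1)=2^m-2$ quotient solutions so obtained are pairwise non-isomorphic; each is indecomposable, being a quotient of the indecomposable solution $\mathcal{U}$, and each has size $2m=n$. One further solution comes from the congruence $\theta(n,G,0)$, which yields the $n$-element permutation solution: it has a different value of the first parameter, so by Theorem~\ref{thm:congr} it is not isomorphic to any of the previous ones. As $2^m=2^{n/2}$, this produces at least $(2^m-2)+1=2^{n/2}-1$ pairwise non-isomorphic indecomposable solutions of size $n$, as required.

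Given Propositions~\ref{prop:homim} and~\ref{prop:image}, Corollary~\ref{cor:amount} and Theorem~\ref{thm:congr}, the argument is mostly bookkeeping; the one point demanding real care is the identification $G/K\cong\Z^{m-1}$, since it is precisely this freeness of rank $m-1$ that makes the number of admissible $H$ equal to $2^{m-1}-1$ and so delivers the bound on the nose.
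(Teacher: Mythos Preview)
Your proof is correct and follows essentially the same route as the paper: count the congruences $\theta(m,H,r)$ of the universal solution with $m=n/2$ and $[G:H]=2$, then add the permutation solution with $m=n$. The paper simply cites Proposition~\ref{number} and Remark~\ref{number2} (which, for $p=2$ and $k=1$, give $\mathfrak n(\Z_2,2^{s-1})=2^{2^{s-1}}-2$) instead of carrying out the subgroup count by hand, but your direct identification $G/K\cong\Z^{m-1}$ and the ensuing count of index-$2$ subgroups is exactly the content of that proposition in this special case.
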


\begin{proof}
 For $s=1$ there exists exactly one solution, namely the permutation one. Hence suppose $s>1$. 
 According to Proposition~\ref{number} and Remark~\ref{number2}, there are $2^{2^{s-1}}-2$ solutions of size~$2^s$ with $m=2^{s-1}$.
 Plus there is one solution with $m=2^s$.
\end{proof}

Regardless of the exponential growth, still $2^{O(n)}\ll 2^{O(n^2)}$. Of course, we have a lower bound for the number of indecomposables only but the experience from other types of algebraic structures suggests that the upper bound might be similar.

\begin{conjecture}
 There is a constant $c\in\mathbb{Q}
^+$ such that the number of indecomposable
 solutions of size~$n$ is less then $2^{cn}$, for each~$n\in \N$.
\end{conjecture}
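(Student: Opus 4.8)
The plan is to bound the number of indecomposable involutive solutions of size $n$ by first reducing each solution to a finite package of brace-theoretic data and then controlling how that data grows with $n$. The crudest bound is already $n!$, since for any solution $(X,\sigma,\tau)$ the permutation group $\mathcal{G}(X)$ is a subgroup of $\mathrm{Sym}(X)$ and, by indecomposability, $X$ is a single $\mathcal{G}(X)$-orbit. Counting transitive subgroups of $\mathrm{Sym}(X)$ (of which there are at most $2^{O(n^2)}$ up to conjugacy, by Pyber's subgroup bound) and multiplying by the at most $|\mathcal{G}(X)|\le n!$ choices of cycle base yields only $2^{O(n^2)}$, the same exponent as the decomposable lower bound of Blackburn adapted in \cite{B13}. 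The entire content of the conjecture is to replace this quadratic exponent by a linear one, so any viable argument must avoid summing over \emph{all} transitive groups of degree $n$.

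First I would set up the parametrization recalled in Section~\ref{sec:unicon}: by Rump's theorem each uniconnected solution, and by the extension results of Bachiller--Ced\'o--Jespers \cite{BCJ16} each indecomposable one, is recovered from a left brace $B=\mathcal{G}(X)$ together with the carrier $X$ realized as a (transitive) cycle base. The structural leverage is that this cycle base, of size $n$, \emph{generates $B$ both additively and multiplicatively}: $(B,+)$ is abelian on at most $n$ generators and $(B,\circ)$ is generated by the same $n$ elements. The aim of this step is a uniform bound $|\mathcal{G}(X)|\le 2^{O(n)}$, strictly sharper than $n!$, extracted from the fact that $B$ is an $n$-generated brace; the number of cycle bases then contributes a further factor of at most $|B|\le 2^{O(n)}$. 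The obstruction here is that the $\lambda$-automorphisms $\lambda_x\in\Aut(B,+)$ attached to the generators a priori range over a set of size $|\Aut(B,+)|$, which is not linearly encodable, so one needs the braces that actually occur to be exponentially, not super-exponentially, many.

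The more promising route, which I would pursue in parallel, is a recursion on the retraction. By \cite{CCP} every finite indecomposable solution is a dynamical extension of a simple one, and iterating $\mathrm{Ret}$ presents $(X,\sigma,\tau)$ as a tower of extensions whose fibers carry abelian structure governed by $\dis{X}$. Writing $N(n)$ for the number of indecomposable solutions of size $n$, I would try to prove a recursion $N(n)\le \sum_{\bar n\mid n} N(\bar n)\cdot E(n)$, where $E(n)$ counts the extension data of a single retraction layer, and close the induction provided $E(n)=2^{O(n)}$. The level-$2$ analysis of the present paper is exactly the base case: Proposition~\ref{prop:homim} exhibits one universal solution $\mathcal{S}((\bigoplus_\Z\Z)\times\Z,\mathbf{c})$, and Proposition~\ref{number} together with Theorem~\ref{thm:congr} shows its congruences $\theta(m,H,r)$ are parametrized by data of count $2^{O(n)}$, since $G/H$ has at most $m-1$ generators and the Gaussian-binomial count of admissible subgroups $H$ has exponent $O(n)$. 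This both models $E(n)$ and verifies the conjecture on the class treated here.

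The hard part will be bounding $E(n)$ — equivalently the relevant extension cocycles, or the number of cycle bases per brace — by $2^{O(n)}$ rather than $2^{O(n^2)}$: at higher multipermutation levels the fibers are not governed by a single abelian displacement group as cleanly as the decomposition $\mathcal{G}(X)=\dis{X}\rtimes\langle\pi\rangle$ of Theorem~\ref{th:main} provides at level~$2$. Equally serious is the existence of \emph{irretractable} indecomposable solutions, to which the retraction recursion does not apply and for which no structure theorem is available; these must be counted directly, and it is precisely the absence of a $2^{O(n)}$ bound on $|\mathcal{G}(X)|$, coupled with a rigidity statement limiting the cycle-base and extension choices, that keeps the naive count stuck at $2^{O(n^2)}$. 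A successful proof therefore hinges on a new uniform bound $|\mathcal{G}(X)|\le 2^{O(n)}$ for indecomposable involutive solutions together with a matching $2^{O(n)}$ bound on the number of braces realizing a given carrier size — which is why, beyond the level-$2$ case settled above, the statement remains a conjecture.
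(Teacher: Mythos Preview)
The statement you are addressing is a \emph{conjecture} in the paper, not a theorem: the authors state it immediately after observing that $2^{O(n)}\ll 2^{O(n^2)}$ and offer no proof or even a proof sketch. There is therefore no argument in the paper to compare your proposal against.

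Your write-up is not a proof either, and you are honest about this: it is a survey of possible strategies (brace parametrization, retraction recursion, extension-cocycle counting) together with the obstructions that keep each one from going through, ending with the acknowledgment that the statement ``remains a conjecture.'' That is an accurate assessment of the situation. The one place where your discussion overreaches slightly is the claim that the level-$2$ case is ``settled'' by Proposition~\ref{number} and Theorem~\ref{thm:congr}: those results parametrize the congruences of the universal object and count solutions for specific small shapes of $G/H$, but the paper does not actually assemble them into a global $2^{O(n)}$ upper bound even for multipermutation level~$2$. Doing so would require summing the counts $\mathfrak{n}(A,m)$ over all admissible pairs $(A,m)$ with $m\cdot |A|=n$, and the paper stops short of carrying this out in general.
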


Proposition~\ref{number} may be generalized to the case
when $A$ is a free $\Z_t$-module, for some number~$t$. However, the expression would be too complicated and hence we give a special
case only, namely with a group $(A,+,0)$ cyclic of size~$p^k$.

\begin{proposition}\label{number3}
  Let~$p$ be a prime number, let $(A,+,0)$ be a cyclic group of order~$p^k$ and let~$m>1$. Then the number $\mathfrak{n}(A,m)$ 
of indecomposable solutions $(X,\sigma,\tau)$ of multipermutation
  level~$2$ such that $\dis{X}/\dis{X}_e\cong A$ and  $[\mathcal{G}(X):\mathcal{G}(X)_e\dis{X}]=m$ is
  \[ \mathfrak{n}(A,m)=p^{km-m-k+2}\cdot\frac{p^{m-1}-1}{p-1}.\]
\end{proposition}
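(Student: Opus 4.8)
The plan is to mimic the counting argument of Proposition~\ref{number}, replacing the elementary abelian setting by a cyclic $\Z_{p^k}$-module structure. As in that proof, every indecomposable solution of multipermutation level~$2$ with $\dis{X}/\dis{X}_e\cong A$ and $[\mathcal{G}(X):\mathcal{G}(X)_e\dis{X}]=m$ arises, by Propositions~\ref{prop:homim} and~\ref{prop:512} and Corollary~\ref{cor:amount}, as a quotient $\mathcal{S}(G,\mathbf{c})/\theta(m,H,r)$ with $G=(\bigoplus_\Z\Z)\times\Z$, $G/H\cong A$ cyclic of order~$p^k$, and $r\in G$; by Theorem~\ref{thm:congr} distinct pairs $(H,r\bmod H)$ give non-isomorphic solutions. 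So $\mathfrak{n}(A,m)$ equals the number of admissible subgroups~$H$, times $[G:H]=p^k$ for the choices of~$r$.

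The counting of~$H$ is the main step. By Proposition~\ref{prop:image}(ii)--(iii) we have $\sum_{i=1}^m e_i\in H$ and $e_{i+m}-e_i\in H$ for all~$i\in\Z$, so $H$ contains the subgroup $N_0=\langle\sum_{i=1}^m e_i,\ e_{i+m}-e_i\mid i\in\Z\rangle$, and $G/N_0$ is free abelian of rank~$m-1$ with basis (the images of) $e_1,\dots,e_{m-1}$. Since $G/H$ must be \emph{cyclic} of order~$p^k$, the problem reduces to: count subgroups $\bar H$ of $\Z^{m-1}$ with quotient cyclic of order exactly~$p^k$. First I would reduce modulo $p^k\cdot\Z^{m-1}$: any such $\bar H$ contains $p^k\Z^{m-1}$ (because $p^k$ kills $G/H$, hence $p^k e_i\in H$), so we may work inside the finite group $M=(\Z_{p^k})^{m-1}$ and count subgroups $K\le M$ with $M/K\cong\Z_{p^k}$. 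Dually, $M/K\cong\Z_{p^k}$ means $K^\perp$ (in the Pontryagin dual $\widehat M\cong M$) is a cyclic subgroup of order exactly~$p^k$, equivalently a cyclic direct summand of~$M$ of order~$p^k$. The number of elements of order exactly~$p^k$ in $M=(\Z_{p^k})^{m-1}$ is $p^{k(m-1)}-p^{(k-1)(m-1)}$, and each cyclic subgroup of order~$p^k$ contains $\varphi(p^k)=p^k-p^{k-1}$ of them; moreover every element of order~$p^k$ generates a cyclic \emph{summand} (this is where I use that the exponent of~$M$ is~$p^k$). Hence the number of such subgroups is
\[
\frac{p^{k(m-1)}-p^{(k-1)(m-1)}}{p^k-p^{k-1}}
=\frac{p^{(k-1)(m-1)}\,(p^{m-1}-1)}{p^{k-1}(p-1)}
=p^{(k-1)(m-1)-(k-1)}\cdot\frac{p^{m-1}-1}{p-1}
=p^{km-m-k+2}\cdot\frac{p^{m-1}-1}{p-1}\cdot\frac1{p^k}.
\]
Wait — I must be careful with the bookkeeping: the displayed count is the number of~$H$, and then multiplying by the $p^k$ choices of~$r$ gives exactly $\mathfrak{n}(A,m)=p^{km-m-k+2}\cdot\frac{p^{m-1}-1}{p-1}$; I would recheck the exponent arithmetic $(k-1)(m-1)-(k-1)+k = km-m-k+2$ at the end.

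I expect the genuine obstacle to be the duality/summand step: showing that subgroups $H$ with $G/H$ \emph{cyclic} of order $p^k$ correspond bijectively to cyclic direct summands of order $p^k$ in a free $\Z_{p^k}$-module of rank $m-1$, and counting the latter correctly (in particular verifying that every element of maximal order generates a direct summand, and that no two such elements of the same cyclic subgroup are over-counted). One also needs to confirm that the constraint ``order \emph{exactly} $p^k$'' — rather than dividing $p^k$ — is the right one, which follows from $\dis{X}/\dis{X}_e\cong A$ being cyclic of order precisely $p^k$ together with Proposition~\ref{prop:510}; and that the case $m=1$ is excluded by hypothesis (there $G/H$ has $0$ generators, forcing $A$ trivial). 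The remaining bookkeeping — multiplying by the number of cosets of~$H$ for~$r$, and invoking Theorem~\ref{thm:congr} for non-isomorphism — is routine.
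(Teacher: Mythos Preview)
Your argument is correct and follows essentially the same route as the paper: reduce to the free $\Z_{p^k}$-module $(\Z_{p^k})^{m-1}$, count subgroups with cyclic quotient of order~$p^k$ by duality (hyperplanes $\leftrightarrow$ lines), divide the number $p^{k(m-1)}-p^{(k-1)(m-1)}$ of order-$p^k$ elements by $p^k-p^{k-1}$, and multiply by the~$p^k$ choices of~$r$. Your detour through ``cyclic direct summand'' is harmless but unnecessary---the duality already gives that $K^\perp$ is cyclic of order~$p^k$, and counting such cyclic subgroups is exactly the generator count you perform.
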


\begin{proof}
 The proof is the same as for Proposition~\ref{number}, with the exception that $N$ is a free $\Z_{p^k}$-module of rank $m-1$ and we
 count the number of its hyperplanes (free submodules of rank $m-2$).
 It is well known that each hyperplane corresponds to a line in the dual free module and hence the number of hyperplanes is equal to the number of lines. Each line is generated by a vector with at least one coordinate invertible. There are $(p^k)^{m-1}-(p^{k-1})^{m-1}$ such vectors. There are $p^k-p^{k-1}$ multiples of the vector
 that generate the same submodule. Hence there are
 $\frac{(p^k)^{m-1}-(p^{k-1})^{m-1}}{p^k-p^{k-1}}$ choices of~$H$.
 For each such~$H$, there are $p^k$ choices of~$r$.
\end{proof}

\begin{example}
 Let us compute the number of all indecomposable solutions of multipermutation level~$2$ of size 16:
 \begin{itemize}
  \item $8$ solutions with $m=2$ and $G/H\cong \Z_8$,
  \item $112$ solutions with $m=4$ and $G/H\cong \Z_4$,
  \item $28$ solutions with $m=4$ and $G/H\cong \Z_2^2$,
  \item $254$ solutions with $m=8$ and $G/H\cong \Z_2$,
  \item $1$ solution with $m=16$ and $G/H\cong \Z_1$.
 \end{itemize}
\end{example}
Analogously, just using the ideas from the beginning of the section and Propositions~\ref{number} and~\ref{number3}, we are able to compute the numbers
of indecomposable solutions of multipermutation level~2 up to the size of~$17$. We can also compare our numbers with
the results from \cite{AMV} (the numbers from the first three rows of
Table 1).
\begin{table}[h!]
$$\begin{array}{|r|r@{\>\;}r@{\>\;}r@{\>\;}r@{\>\;}r@{\>\;}r@{\>\;}r@{\>\;}r@{\>\;}r@{\>\;}r@{\>\;}r@{\>\;}r@{\>\;}r@{\>\;}r@{\>\;}r@{\>\;}r|}\hline
n                       & 1& 2& 3& 4&  5&  6&   7&    8&9&10&11&12&13&14&15&16\\\hline
\text{solutions}    & 1& 2& 5& 23&  88&  595&   3456&    34530&321931&4895272&&&&&& \\ \hline
\text{$2$-permut.}    & 1& 2& 5& 19&  70&  359&   2095&    16332&&&&&&&& \\ \hline
\text{indecom.}    & 1& 1& 1& 5& 1& 10&  1&   100&    16&36&1&&1&&& \\ \hline
\text{$2$-permut. ind.}  & 1& 1& 1& 3&  1&  10&   1&    19&13&36&1&136&1&134&151&403\\ \hline
\text{$2$-per. ind. abel.}  & 1& 1& 1& 3&  1&  1&   1&    3&4&1&1&3&1&1&1&7\\ \hline
\text{$2$-per. ind. cycl.}  & 1& 1& 1& 2&  1&  1&   1&    2&3&1&1&2&1&1&1&4\\ \hline
\end{array}$$
\caption{The number of $2$-permutational indecomposable solutions of size $n$, up to isomorphism.}
\label{Fig:count_solution}
\end{table}

Well known results of Etingof, Schedler, Soloviev \cite{ESS} and Etingof, Guralnick, Soloviev \cite{EGS} say that indecomposable solutions of prime cardinality are affine and have cyclic permutation group. Smoktunowicz and Smoktunowicz investigated in \cite{SS18} whether an analogous characterization for indecomposable solutions of arbitrary cardinality also exists and they concluded that with high probability to obtain similar results are not possible. But perhaps not all is lost and some analogies may be achieved.

In \cite[Proposition 6.1]{C21} Castelli showed that each uniconnected solution of square-free odd order is always of multipermutation level at most~$2$. On the other hand by Example \ref{exm:contr} there is $2$-permutational indecomposable solution of size $6$ which is not 
uniconnected. 
According to Table 1 the following natural question arises:
\begin{ques}
Is it true that each indecomposable solution of square-free order is always of multipermutation level at most~$2$?
\end{ques}


\end{document}